\documentclass{amsart}
\allowdisplaybreaks
\usepackage{amsmath,amsthm,amssymb,graphicx,color}
\usepackage{epsfig,amsfonts,latexsym}

\usepackage[colorlinks = false,
linkcolor = black,
urlcolor  = black,
citecolor = black,
anchorcolor = black]{hyperref}

\newcommand{\MYhref}[3][black]{\href{#2}{\color{#1}{#3}}}%

\usepackage{enumerate}
\allowdisplaybreaks
\usepackage{bm}
\newcommand*{\B}[1]{\ifmmode\bm{#1}\else\textbf{#1}\fi}
\usepackage{natbib}
\pagestyle{plain}

\usepackage[left=1.5in,top=1.5in,right=1.5in, bottom=1.47in]{geometry}

\numberwithin{equation}{section}

\allowdisplaybreaks


\usepackage{pslatex}
\usepackage{graphicx}
\usepackage{hyperref}
\usepackage{lineno}
\usepackage{color}


\def\convas{\stackrel{a.s.}{\longrightarrow}}
\def\eqd{\stackrel{d}{=}}





\def\to{\rightarrow}





\def\<{\langle}
\def\>{\rangle}


\newcommand\mnote[1]{} 
\newcommand{\beq}[1]{\begin{equation}\label{#1}}
\newcommand\eeq{\end{equation}}
\newcommand\ben{\begin{equation}}
\newcommand\een{\end{equation}}
\newcommand\bes{\begin{eqnarray*}}
	\newcommand\ees{\end{eqnarray*}}
\newcommand\besn{\begin{eqnarray}}
\newcommand\eesn{\end{eqnarray}}
\newcommand\beal{\begin{align*}}
\newcommand\eaal{\end{align*}}

\def\bthm{\begin{theorem}}
	\def\ethm{\end{theorem}}
\def\bdefn{\begin{definition}}
	\def\edefn{\end{definition}}
\def\benu{\begin{enumerate}}
	\def\eenu{\end{enumerate}}
\def\beit{\begin{itemize}}
	\def\eeit{\end{itemize}}
\def\beds{\begin{description}}
	\def\eeds{\end{description}}
\def\bepr{\begin{problem}}
	\def\eepr{\end{problem}}

\def\bprf{\begin{proof}}
	\def\eprf{\end{proof}}
\def\berk{\begin{remark}}
	\def\eerk{\end{remark}}
\def\bex{\begin{exercise}}
	\def\eex{\end{exercise}}
\def\beg{\begin{example}}
	\def\eeg{\end{example}}



\def\LLL{{\mathcal L}}

\def\PP{{\mathcal P}}
\def\NN{{\mathcal N}}






\newcommand{\sm}{{\raise0.3ex\hbox{$\scriptstyle \setminus$}}}

\renewcommand{\phi}{\varphi}


\def\CHI{\mathchoice%
	{\raise2pt\hbox{$\chi$}}%
	{\raise2pt\hbox{$\chi$}}%
	{\raise1.3pt\hbox{$\scriptstyle\chi$}}%
	{\raise0.8pt\hbox{$\scriptscriptstyle\chi$}}}
\def\smalloplus{\raise1pt\hbox{$\,\scriptstyle \oplus\;$}}



\usepackage{mathtools}
\usepackage{pslatex}

\def \sas {S$\alpha$S }

\def \1 {\mathbf{1}}





\usepackage{amsmath,amsthm,amssymb,color,tikz,epsfig,amsfonts,latexsym,  bbm, mathrsfs}
\usepackage{epsfig,amsfonts,latexsym}
\usepackage{hyperref}
\hypersetup{
	colorlinks=true,
	linkcolor=blue,
	citecolor=blue,
	urlcolor=blue,
	pdfborder={0 0 0}
}

\usepackage{graphicx}
\usepackage{xcolor}

\numberwithin{equation}{section}

\usetikzlibrary{matrix}

\usepackage{bbm}



\allowdisplaybreaks

\newcommand{\been}{\begin{enumerate}}

	\usepackage{cleveref}
	\newtheorem{thm}{Theorem}[section]
	
	\newtheorem{definition}[thm]{Definition}
	
	\newtheorem{qn}[thm]{Question}
	\newtheorem{op}{Problem}
	\newtheorem{conj}[op]{Conjecture}
	
	\newtheorem{cor}[thm]{Corollary}
	\newtheorem{remark}[thm]{Remark}
	\newtheorem{example}[thm]{Example}

	\usepackage{mathtools}
	\usepackage{pslatex}

	\def \sas {S$\alpha$S }

	
		
		%
		
		

		
		
		

		
		
		

\begin{document}
	
	\author{Parthanil Roy}
	
	\address{Theoretical Statistics and Mathematics Unit, Indian Statistical Institute, Bangalore, Karnataka 560059, India.}
	
	
	\email{parthanil.roy@gmail.com}

		\title{Group measure space construction, ergodicity and $W^\ast$-rigidity for stable random fields}

		
		\begin{abstract}
			This work discovers a novel link between probability theory (of stable random fields) and von Neumann algebras. It is established that the \emph{group measure space construction} corresponding to a minimal representation is an invariant of a stationary \emph{symmetric $\alpha$-stable} (S$\alpha$S) random field indexed by any countable group $G$. When $G=\mathbb{Z}^d$, we characterize ergodicity (and also absolute non-ergodicity) of stationary S$\alpha$S fields in terms of the central decomposition of this crossed product von Neumann algebra coming from any (not necessarily minimal) Rosi\'{n}ski representation. This shows that ergodicity (or the complete absence of it) is a $W^\ast$-rigid property (in a suitable sense) for this class of fields. All our results have analogues for stationary max-stable random fields as well. 
		\end{abstract}
		
		\subjclass[2010]{Primary 37A40, 46L10, 60G52, 60G60; Secondary 37A50, 46L36, 60G10.}
		
		\keywords{Group measure space construction, $W^\ast$-rigidity, nonsingular group action, ergodicity, stationary stable process, random field.}
		
		\thanks{This research was partially supported by a MATRICS grant from the Science and Engineering Research Board, and a Swarnajayanti Fellowship from Department of Science and Technology, Government of India.}

		\dedicatory{Dedicated to the memory of Professor K.~R.~Parthasarathy}

\maketitle

\tableofcontents

\section{Introduction} \label{sec:intro}

This paper establishes a new connection between probability theory and von Neumann algebras via ergodic theory. We will assume that the random variables discussed here are defined on a common probability space $(\Omega, \mathcal{A}, \mathbb{P})$ unless mentioned otherwise. The corresponding expectation operator will be denoted by $\mathbb{E}(\cdot)$. For two random variables $Y$, $Z$, we write $Y\eqd Z$ if $Y$ and $Z$ are identically distributed.
For any index set $T$ and two stochastic processes (i.e., two collections of random variables defined on $(\Omega, \mathcal{A}, \mathbb{P})$) $\{Y(t)\}_{t \in T}$ and $\{Z(t)\}_{t \in T}$, the notation $\{Y(t)\}  \eqd \{Z(t)\}$ (or simply $Y(t) \eqd Z(t)$, $t \in T$) means that they have the same finite-dimensional distributions. We will take $T=G$ to be a countable group in this paper. 

For $\alpha \in (0, \infty]$ and a $\sigma$-finite standard measure space $(S,\mathcal{S}, \mu)$, we define the space $\LLL^{\alpha}(S,\mu):=\left\{f:S\to\mathbb{C} \mbox{ measurable}~|~
\|f\|_\alpha <\infty \right\}$, where $\|f\|_\alpha:=\left(\int_S|f(s)|^{\alpha}\,\mu(ds)\right)^{1/\alpha}$ for $\alpha \in (0, \infty)$ and $\|f\|_\infty:= \sup_{s \in S} |f(s)|$ (the \emph{supremum} is actually an \emph{essential supremum}). The equalities in $\LLL^{\alpha}(S,\mu)$ (and everywhere else in this paper) should be interpreted as modulo null sets. Note that $\|\cdot\|_\alpha$ is a norm if and only if $\alpha \in [1,\infty]$ making the corresponding $\LLL^{\alpha}(S,\mu)$ a Banach space. For $\alpha \in (0,1)$, however, $\LLL^{\alpha}(S,\mu)$ is an extremely rigid linear space over $\mathbb{C}$ with very few isometries. For our purpose, the functions belonging to $\LLL^{\alpha}(S,\mu)$ for $0 < \alpha <2$ will actually be real-valued. They will arise in the integral representations of a class of real-valued stochastic processes called symmetric $\alpha$-stable random fields, which will be systematically defined in the next two paragraphs.

A random variable $X$ is said to follow \emph{symmetric $\alpha$-stable} (S$\alpha$S) distribution
($\alpha \in (0, 2]$, the index of stability) with scale parameter
$\sigma > 0$ if it has characteristic function (i.e., Fourier transform) of the form
$$
\mathbb{E}(e^{i\theta X}) = \exp{\{-\sigma^{\alpha}|\theta|^\alpha\}}, \;\;\theta \in \mathbb{R}.
$$ 
It can be shown that such random variables exist and arise as scaling limits of sums of symmetric random variables. When $\alpha =2$, we get a Gaussian random variable. In this paper, however, we will always concentrate on the non-Gaussian case, i.e., $\alpha \in (0,2)$. For an encyclopedic treatment of $\alpha$-stable ($0<\alpha <2$) distributions and processes, we refer to \cite{samorodnitsky:taqqu:1994}. 

Let $G$ be a countable group with identity element $e$. A stochastic process $\mathbf{X}=\{X_t\}_{t\in G}$ is called an \sas random field if for each $k \geq 1$, for each $t_1, t_2, \ldots, t_k \in G$ and for each $c_1, c_2, \ldots, c_k \in \mathbb{R}$, the linear combination $\sum_{i=1}^k c_i X_{t_i}$  follows an \sas distribution. Also $\{X_t\}_{t \in G}$ is called left-stationary if $\{X_t\} \overset{d}{=} \{X_{st}\} $ for all $s\in G$. The notion of right-stationarity can be defined similarly and will coincide with left-stationarity when $G$ is abelian. All our results for left-stationary \sas random fields will have their counterparts in the right-stationary case as well. From now on, we shall write stationary to mean left-stationary throughout this work.

In this paper, we introduce a von Neumann algebraic invariant for a stationary \sas random field indexed by any countable group using the crossed product construction of \cite{murray:vonneuman:1936} for the group action arising in the work of \cite{rosinski:1995}. Specializing in the case when the indexing group is $\mathbb{Z}^d$, we give a characterization of ergodicity (and also of complete non-ergodicity) of these fields in terms of the central decomposition of our invariant establishing $W^\ast$-rigidity (in an appropriate sense) of ergodicity (as well as full non-ergodicity) in this setup. We also observe in Section~\ref{sec:max_stable} that all our results hold for stationary max-stable random fields as well.

The rest of this section concentrates on a brief outline of our main contributions and also the machineries used to establish them. In a nutshell, the techniques of the proofs are based on ergodic theory, operator algebra as well as probability theory. To the best of our knowledge, this is the first work on stationary \sas random fields that constructs this crossed product invariant and uses it to establish a  $W^\ast$-rigidity result. We expect this association between von Neumann algebras and stable random fields to be very powerful because both themes are strongly tied up with the ergodic theory of nonsingular group actions as described below. 

It was shown in \cite{rosinski:1995, rosinski:2000} that any stationary \sas random field $\{X_t\}_{t \in G}$ has an integral representation of the form
\begin{equation}
X_t \eqd \int_S c_t(s)\left(\frac{d\mu\circ\phi_t}{d\mu}(s)\right)^{1/\alpha} f\circ \phi_t(s) M(ds), \;\; t \in G, \label{repn:rosinski_intro}
\end{equation}
where $(S, \mathcal{S}, \mu)$ is a $\sigma$-finite standard measure space, $\{\phi_t\}_{t \in G}$ is nonsingular group action on $(S, \mu)$, $\{c_t\}_{t \in G}$ is a $\pm 1$-valued cocycle for $\{\phi_t\}_{t \in G}$, $f \in \LLL^\alpha(S, \mu)$ is a real-valued function and $M$ is an \sas random measure on $S$ with control measure $\mu$. We will assume, without loss of generality, that the full support condition
\[
\bigcup_{t \in G} \mbox{Support}(f \circ \phi_t) = S
\]
holds. See Sections~\ref{sec:erg_th} and \ref{sec:sas} for details of these terminology. Roughly speaking, the randomness in $\{X_t\}_{t \in G}$ is completely absorbed in the random measure $M$, and the infinite-dimensional parameter $\left(f, \{\phi_t\}, \{c_t\}\right)$ (or simply the action $\{\phi_t\}$) carries all (a lot of, resp.) information on the dependence structure of $\{X_t\}_{t \in G}$. 

Keeping the above intuition in mind, it is not at all surprising that various probabilistic facets of a stationary \sas random field $\{X_t\}$ have been connected to ergodic theoretic properties of the underlying nonsingular action $\{\phi_t\}$. These facets include mixing features (see \cite{rosinski:samorodnitsky:1996}, \cite{samorodnitsky:2005a}, \cite{roy:2007a, roy:2012}, \cite{wang:roy:stoev:2013}), large deviations issues (see \cite{mikosch:samorodnitsky:2000a}, \cite{fasen:roy:2016}), growth of maxima (see \cite{samorodnitsky:2004a}, \cite{roy:samorodnitsky:2008}, \cite{owada:samorodnitsky:2015a}, \cite{sarkar:roy:2018}, \cite{athreya:mj:roy:2019}), extremal point processes (see \cite{resnick:samorodnitsky:2004}, \cite{roy:2010a}, \cite{sarkar:roy:2018}), functional central limit theorem (see \cite{owada:samorodnitsky:2015b},  \cite{jung:owada:samorodnitsky:2017}), statistical aspects (see \cite{bhattacharya:roy:2018}), uniform H\"{o}lder continuity of paths (see \cite{panigrahi:roy:xiao:2021}), etc.

On the other hand, given a nonsingular group action $\{\phi_t\}$, it is possible to construct a crossed product von Neumann algebra $\LLL^\infty(S, \mu) \rtimes G$ called the group measure space construction (introduced in the measure-preserving case by \cite{murray:vonneuman:1936}); see Section~\ref{subsec:grp_ms_constr}. It is well-known that ergodic theoretic properties of the nonsingular action $\{\phi_t\}$ are nicely encoded in $\LLL^\infty(S, \mu) \rtimes G$; see, for example, \cite{jones:2009}, \cite{peterson:2013} and the references therein. Therefore, one would expect, in light of the discussions in the previous paragraph, that the group measure space construction corresponding to the underlying group action $\{\phi_t\}$ should become an important invariant that contains a lot of information about probabilistics features of the stable random field $\{X_t\}$. 

The matter is slightly delicate because the integral representation of the form \eqref{repn:rosinski_intro} (now known as the \emph{Rosi\'{n}ski representation}; see \cite{roy:2017}) isn't unique. However, this obstacle can be overcome if we restrict our attention to minimal representations (see Definition~\ref{defn:minml:repn} below). It was shown by \cite{rosinski:1995} for $G=\mathbb{Z}$ that the group actions in any two minimal representations of a fixed stationary \sas random field are conjugate (i.e., isomorphic as group actions). We observe that a careful imitation of the proof, keeping in mind the potential noncommutativity of $G$, extends this result to any countable group. This yields Theorem~\ref{thm:minml_repn:conjugacy}, from which the following result can be shown since conjugacy implies orbit equivalence, which in turn implies $W^\ast$-equivalence; see, for example, \cite{singer:1955}. 

\begin{thm} \label{thm:minml:invariant_intro}
The group actions arising in all minimal representations of a fixed stationary \sas random field are $W^\ast$-equivalent, i.e, their group measure space constructions are isomorphic as von Neumann algebras. 
\end{thm}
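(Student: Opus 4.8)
The plan is to reduce the statement to the conjugacy that has already been established and then transport that conjugacy up to the level of von Neumann algebras by functoriality of the crossed product. Fix two minimal representations (Definition~\ref{defn:minml:repn}) of the field, with nonsingular $G$-actions $\{\phi_t\}_{t\in G}$ on $(S,\mathcal{S},\mu)$ and $\{\phi'_t\}_{t\in G}$ on $(S',\mathcal{S}',\mu')$. By Theorem~\ref{thm:minml_repn:conjugacy} these actions are conjugate: there is a nonsingular Borel isomorphism $\Phi\colon S\to S'$, measure-class-preserving in both directions, with $\Phi\circ\phi_t=\phi'_t\circ\Phi$ for every $t\in G$, each identity holding $\mu$-a.e. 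Since $G$ is countable, I would first intersect the countably many conull sets on which these identities hold to obtain a single $G$-invariant conull set on which the intertwining is exact, so that after discarding a null set $\Phi$ is a genuine pointwise conjugacy.

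With a clean conjugacy in hand, the main step is to produce the algebra isomorphism directly. The map $\Phi$ induces a normal $*$-isomorphism $\theta\colon \LLL^\infty(S',\mu')\to\LLL^\infty(S,\mu)$ by $\theta(g)=g\circ\Phi$; normality and bijectivity follow from $\Phi$ being a nonsingular isomorphism, and the intertwining relation makes $\theta$ equivariant for the two $G$-actions on the function algebras, i.e.\ $\theta\bigl(g\circ(\phi'_t)^{-1}\bigr)=\theta(g)\circ\phi_t^{-1}$ for all $t$ (using $(\phi'_t)^{-1}\circ\Phi=\Phi\circ\phi_t^{-1}$). Because the group measure space construction is functorial in the pair (algebra, $G$-action), the equivariant isomorphism $\theta$ together with the identity on $G$ extends to a $*$-isomorphism $\LLL^\infty(S',\mu')\rtimes G \cong \LLL^\infty(S,\mu)\rtimes G$ that sends the canonical implementing unitaries $u'_t$ to $u_t$ and restricts to $\theta$ on the function-algebra part. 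This already yields the desired $W^\ast$-equivalence.

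Alternatively, and in the spirit of the citation to \cite{singer:1955}, I would phrase the same passage through orbit equivalence: a conjugacy trivially carries $\{\phi_t\}$-orbits onto $\{\phi'_t\}$-orbits, so $\Phi$ is in particular an orbit equivalence, and Singer's theorem then implements an isomorphism of the two group measure space factors that additionally matches the Cartan subalgebras $\LLL^\infty(S,\mu)$ and $\LLL^\infty(S',\mu')$. The one point requiring care on this route is freeness: Singer's classical statement is cleanest for (essentially) free actions, so if the Rosi\'{n}ski actions are not free one should either pass to the von Neumann algebra of the orbit equivalence relation and use its standard identification with the crossed product, or simply fall back on the direct functoriality argument of the previous paragraph, which requires no freeness assumption whatsoever.

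The genuinely hard part of the result is encapsulated in Theorem~\ref{thm:minml_repn:conjugacy}, whose proof adapts Rosi\'{n}ski's rigidity argument to a potentially noncommutative $G$; once conjugacy is granted, the present deduction is essentially formal. The only bookkeeping obstacles I anticipate are (i) assembling the per-$t$ a.e.\ intertwining identities into a single exact relation, which is precisely where countability of $G$ is used, and (ii) verifying normality of $\theta$ so that it extends from the generating function algebra to the full von Neumann algebra completion; both are routine but worth stating explicitly.
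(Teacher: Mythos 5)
Your proof is correct, and its core reduction---everything of substance is delegated to Theorem~\ref{thm:minml_repn:conjugacy}---is exactly the paper's. Where you diverge is in how the conjugacy is promoted to an isomorphism of von Neumann algebras. The paper takes the route conjugacy $\Rightarrow$ orbit equivalence $\Rightarrow$ $W^\ast$-equivalence, citing \cite{singer:1955} for the last implication (this is also the final clause of Theorem~\ref{thm:factor_ergodic}). You instead transport the conjugacy directly: the equivariant normal $\ast$-isomorphism $\theta(g)=g\circ\Phi$ of the base algebras extends, by functoriality of the crossed product (equivalently, by spatial implementation via the unitary $g\mapsto (g\circ\Phi)\,\bigl(d(\mu'\circ\Phi)/d\mu\bigr)^{1/2}$ on $\LLL^2$), to an isomorphism of the two crossed products carrying the canonical unitaries to each other. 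Your route buys something genuine: the implication ``orbit equivalence $\Rightarrow$ $W^\ast$-equivalence'' is really a statement about the von Neumann algebra of the orbit equivalence relation and is only valid for (essentially) free actions; for non-free actions the crossed product remembers stabilizers that the orbit relation forgets, and one can construct orbit-equivalent actions with non-isomorphic crossed products (e.g.\ a transitive non-free action versus a transitive free action of the same group on a countable set). Since no freeness is assumed for minimal representations, your direct argument---which needs no freeness at all, because a conjugacy always induces a crossed-product isomorphism---is the more robust deduction and quietly repairs this caveat in the paper's shortcut. Your two bookkeeping points (assembling the per-$t$ almost-everywhere intertwining into one exact relation using countability of $G$, and normality of $\theta$) are also the right ones to flag.
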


\noindent We will call the von Neumann algebra obtained in the above theorem the \emph{minimal group measure space construction} of the stationary \sas random field. This is an important invariant that contains a lot of information on the probabilistic properties of the field. 

Note that \cite{rosinski:1995} established that any minimal representation is actually a Rosi\'{n}ski representation even though the converse is not true. Nor do all Rosi\'nski representations enjoy the uniqueness property as in Theorem~\ref{thm:minml_repn:conjugacy} - two group actions arising out of two different Rosi\'nski representations may not be conjugates of each other. Therefore, the group measure space construction corresponding to a general Rosi\'{n}ski representation may not be an invariant for a stationary \sas random field. In other words, two nonsingular actions arising in two different Rosi\'{n}ski representations (of the same stationary \sas field) may not be $W^\ast$-equivalent. However we do expect, in view of Remark~2.5 of \cite{rosinski:1995} (see also \eqref{reln_between_f_and_fast} below), that some von Neumann algebraic properties will be preserved under further conditions on the group and/or the actions. This is manifested in Corollary~\ref{cor:rosinski_invariant}, for example; see also the discussions below.  

In order to obtain finer results, we assume that $G=\mathbb{Z}^d$ and its action $\{\phi_t\}_{t \in \mathbb{Z}^d}$ (arising in a Rosi\'{n}ski representation of $\{X_t\}_{t \in \mathbb{Z}^d}$) is free (i.e., almost all stabilizers are trivial). The latter assumption enables us to nicely connect the central decomposition (i.e., \eqref{decomp:central} below) of a group measure space construction to the ergodic decomposition wrt the underlying action; see Theorem~\ref{thm:cntrl_n_erg_decomp}, which, together with a characterization of ergodicity of $\{X_t\}_{t \in \mathbb{Z}^d}$ (established in \cite{samorodnitsky:2005a}  for $d=1$ and \cite{wang:roy:stoev:2013} for $d > 1$), gives rise to the following result.

\begin{thm} \label{thm:erg_charac_intro} Suppose $G = \mathbb{Z}^d$ and $\{X_t\}_{t \in \mathbb{Z}^d}$ is a stationary \sas random field. If the nonsingular $\mathbb{Z}^d$-action $\{\phi_t\}$ on $(S, \mu)$ arising in a Rosi\'{n}ski representation of $\{X_t\}$ is free, then the following are equivalent:
\begin{enumerate}
	\item $\{X_t\}$ is ergodic;
	\item $\{X_t\}$ is weakly mixing;
	\item the group measure space construction corresponding to $\{\phi_t\}$ does not admit a $II_1$ factor in its central decomposition.
\end{enumerate}	
\end{thm}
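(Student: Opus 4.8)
The plan is to establish the two equivalences (1)$\Leftrightarrow$(2) and (1)$\Leftrightarrow$(3) separately, the first being essentially a repackaging of known probabilistic facts and the second being the genuinely new ingredient that links the ergodic decomposition of $\{\phi_t\}$ to the central decomposition of the crossed product. For (1)$\Leftrightarrow$(2) I would invoke the ergodicity and weak-mixing characterizations for stationary S$\alpha$S fields established in \cite{samorodnitsky:2005a} (for $d=1$) and \cite{wang:roy:stoev:2013} (for $d>1$): there ergodicity is shown to be equivalent both to weak mixing and to the nullity of the \emph{positive part} of the underlying nonsingular action. This immediately yields (1)$\Leftrightarrow$(2) and reduces the theorem to proving that, under ergodic freeness, the positive part of $\{\phi_t\}$ is non-null precisely when a $II_1$ factor appears in the central decomposition of $\LLL^\infty(S,\mu)\rtimes\mathbb{Z}^d$.

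To obtain (1)$\Leftrightarrow$(3), I would first recall the Hopf and positive--null decompositions of $\{\phi_t\}$: up to $\mu$-null sets, $S$ splits into invariant pieces on which the restricted action is respectively dissipative, conservative of type $II_\infty$ or $III$, and conservative with an equivalent finite invariant measure (type $II_1$), the last piece being the positive part $P$. Since a free $\mathbb{Z}^d$-action preserving a finite equivalent measure is conservative by Poincar\'{e} recurrence, the positive part coincides with the type $II_1$ part, and the cited characterization says $\{X_t\}$ is ergodic if and only if $\mu(P)=0$. On the operator-algebra side, Theorem~\ref{thm:cntrl_n_erg_decomp} identifies the central decomposition \eqref{decomp:central} of $\LLL^\infty(S,\mu)\rtimes\mathbb{Z}^d$ with the ergodic decomposition of $(S,\mu,\{\phi_t\})$; because the action is ergodically free, each ergodic component is a free ergodic nonsingular action and the factor it contributes is of the Murray--von Neumann type matching that of the component. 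In particular the positive (type $II_1$) components, and only those, contribute $II_1$ factors, while dissipative, type $II_\infty$ and type $III$ components contribute factors of types $I$, $II_\infty$ and $III$ respectively. Hence a $II_1$ factor occurs in the central decomposition if and only if $\mu(P)>0$, i.e.\ if and only if $\{X_t\}$ is not ergodic; negating yields (1)$\Leftrightarrow$(3).

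The step I expect to be the main obstacle is the exact type-matching above: one must confirm that the probabilistic ``positive part'' furnished by the ergodicity characterization is literally the same invariant set as the union of those ergodic components whose crossed products are $II_1$ factors, and that no $II_1$ factor can be produced by a dissipative or type $II_\infty$/$III$ component. This is precisely where ergodic freeness is indispensable, since for non-free actions the type of the resulting factor is not governed by the Murray--von Neumann type of the action and the clean correspondence of Theorem~\ref{thm:cntrl_n_erg_decomp} breaks down. A secondary, more technical point is that we work with a (possibly non-minimal) Rosi\'{n}ski representation rather than a minimal one; here I would rely on the fact that the characterization of \cite{samorodnitsky:2005a, wang:roy:stoev:2013} is already stated at the level of an arbitrary Rosi\'{n}ski representation, so that the nullity of the positive part is an intrinsic property of $\{X_t\}$, independent of the chosen representation, in accordance with Remark~2.5 of \cite{rosinski:1995} and \eqref{reln_between_f_and_fast}.
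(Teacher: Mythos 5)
Your route is in essence the paper's own: equivalence of (1) and (2) and the ``null positive part'' criterion come from Theorem~\ref{thm:erg_charac_neveu}, and the new content is obtained by matching the central decomposition with the ergodic decomposition via Theorem~\ref{thm:cntrl_n_erg_decomp} and then detecting $II_1$ factors component by component. One simplification you could make: you invoke the full Hopf/Krieger type classification (dissipative $\to$ type $I$, conservative infinite-invariant-measure $\to$ $II_\infty$, etc.), but none of that is needed; the only component-level fact the argument uses is Theorem~\ref{thm:factor_ergodic}(3), namely that a free ergodic nonsingular action yields a $II_1$ factor if and only if it is a positive action.

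The genuine gap is the sentence ``hence a $II_1$ factor occurs in the central decomposition if and only if $\mu(P)>0$''. The notion in Definition~\ref{no_II_1_factor} is a $\nu$-almost-everywhere statement over the base space $Y$ of the central decomposition, and the set $\{y \in Y : M_y \mbox{ is a $II_1$ factor}\}$ is not known to be measurable, so the pointwise correspondence ``component positive $\Leftrightarrow M_y$ of type $II_1$'' does not by itself translate into the desired equivalence with $\mu(\mathcal{P})>0$. This is exactly the issue the paper flags in Remark~\ref{remark:positive_part_using_II_1} and resolves by two separate arguments, each requiring real work. In one direction, assuming $M_y$ is not $II_1$ for all $y$ outside a measurable $\nu$-null set $N_1$, one proves the inclusion $\mathcal{P} \subseteq \Psi^{-1}(N_1)$: any component meeting $\mathcal{P}$ in positive measure lies entirely inside $\mathcal{P}$ (invariance plus ergodicity of the component), and then Theorem~\ref{thm:erg_comp}(6) shows the restriction of the finite invariant measure on $\mathcal{P}$ to that component stays equivalent to $\mu_y$, forcing that component's factor to be $II_1$ --- a contradiction. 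In the other direction, given a measurable $Y_0$ with $\nu(Y_0)>0$ on which the factors are $II_1$, one must assemble the component-wise invariant probability measures $\tau_v$ into a single finite invariant measure
\begin{equation*}
\tau(B) = \int_V \tau_v(B \cap S_v)\, d\nu(v), \qquad B \subseteq \mathcal{Q} = \Psi^{-1}(V),
\end{equation*}
and verify invariance and $\tau \sim \mu|_{\mathcal{Q}}$, which produces a nontrivial positive part. Your proposal asserts the conclusions of both steps but supplies neither; and the obstacle is not, as you suggest, the type-matching itself (freeness plus Theorem~\ref{thm:factor_ergodic}(3) settles that on each component), but this measure-theoretic glue --- measurability over $Y$, and the restriction/integration of invariant measures across the ergodic decomposition --- which is where the paper's proof spends essentially all of its effort.
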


Observe that the first two statements of the above theorem are both ergodic theoretic and probabilistic whereas the third one is operator algebraic in nature. Thus Theorem~\ref{thm:erg_charac_intro} builds a new bridge between probability theory, ergodic theory and operator algebra. The next result strengthens this bridge by giving an analogous von Neumann algebraic criterion for absolute non-ergodicity for the same class of random fields. 

\begin{thm} \label{thm:erg_part_intro} Suppose $G = \mathbb{Z}^d$ and $\{X_t\}_{t \in \mathbb{Z}^d}$ is a stationary \sas random field. If the nonsingular $\mathbb{Z}^d$-action $\{\phi_t\}$ on $(S, \mu)$ arising in a Rosi\'{n}ski representation of $\{X_t\}$ is free, then the following are equivalent:
	\begin{enumerate}
		\item $\{X_t\}_{t \in \mathbb{Z}^d}$ does not have a nontrivial ergodic part in the sense of \cite{wang:roy:stoev:2013};
		\item $\{\phi_t\}_{t \in \mathbb{Z}^d}$ is a positive action, i.e., there exists a $\{\phi_t\}$-invariant probability measure $\tau \sim \mu$ on $S$;
		\item the group measure space construction corresponding to $\{\phi_t\}$ admits only a $II_1$ factor(s) in its central decomposition.
	\end{enumerate}	
\end{thm}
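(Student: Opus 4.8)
The plan is to establish the two equivalences $(1)\Leftrightarrow(2)$ and $(2)\Leftrightarrow(3)$ separately, using statement $(2)$ (positivity of the action) as the bridge between the probabilistic statement $(1)$ and the operator algebraic statement $(3)$.

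For $(1)\Leftrightarrow(2)$, I would invoke the positive--null decomposition of the nonsingular action underlying a stationary \sas field developed in \cite{wang:roy:stoev:2013}. That theory splits $S$ (modulo $\mu$-null sets) into the maximal \emph{positive} part $\mathcal{P}$, on which $\{\phi_t\}$ admits an equivalent invariant probability measure, and its complementary \emph{null} part $\mathcal{N}$, inducing an independent decomposition $\{X_t\} \eqd \{X_t^{\mathcal{P}} + X_t^{\mathcal{N}}\}$. The ergodicity characterization of \cite{samorodnitsky:2005a, wang:roy:stoev:2013} identifies the nontrivial ergodic part of the field precisely with the contribution of the null part $\mathcal{N}$ (a field generated by a purely null action being ergodic). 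Hence ``$\{X_t\}$ has no nontrivial ergodic part'' is equivalent to $\mu(\mathcal{N}) = 0$, i.e. to $S = \mathcal{P}$ modulo null sets, which is exactly the assertion that $\{\phi_t\}$ is a positive action. This step should amount to matching the existing decomposition theory to the definitions, with only minor bookkeeping needed to pass from $\mathbb{Z}$ to $G = \mathbb{Z}^d$.

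For $(2)\Leftrightarrow(3)$, the key input is Theorem~\ref{thm:cntrl_n_erg_decomp}, which, under the ergodically free hypothesis, realizes the central decomposition of $M := \LLL^\infty(S,\mu) \rtimes \mathbb{Z}^d$ as a direct integral $\int^{\oplus} M_y \, d\nu(y)$ over the space $(Y,\nu)$ of ergodic components, where each fiber $M_y$ is the group measure space construction of the free, ergodic restricted action on $(S_y,\mu_y)$ and is therefore a factor. I would then appeal to the standard type classification of such crossed products (going back to \cite{murray:vonneuman:1936}): since freeness of an action of the infinite group $\mathbb{Z}^d$ precludes atoms, $M_y$ is a $II_1$ factor if and only if the restricted action admits a finite invariant measure equivalent to $\mu_y$ (otherwise $M_y$ is of type $I_\infty$, $II_\infty$ or $III$). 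Consequently, the central decomposition of $M$ consists only of $II_1$ factors if and only if $\nu$-almost every ergodic component carries an equivalent finite invariant measure.

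The direction $(2)\Rightarrow(3)$ is then immediate: a global $\{\phi_t\}$-invariant probability measure $\tau \sim \mu$ restricts and normalizes to an equivalent invariant probability measure on each invariant set $S_y$, forcing every $M_y$ to be $II_1$. The converse $(3)\Rightarrow(2)$ is where I expect the main difficulty. Here one has, for $\nu$-a.e.\ $y$, a finite invariant measure equivalent to $\mu_y$, which by ergodicity is unique up to a scalar and can thus be normalized canonically to a probability measure $\tau_y$; the task is to reassemble these fiberwise measures into a single $\{\phi_t\}$-invariant probability measure $\tau = \int_Y \tau_y \, d\nu(y)$ on $S$ that is equivalent to $\mu$. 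The obstacle is the measure-theoretic one of selecting the Radon--Nikodym densities $d\tau_y/d\mu_y$ in a jointly measurable fashion over $Y$ (for which the scalar uniqueness of the invariant density per component is the crucial lever) and then verifying, via the disintegration $\mu = \int_Y \mu_y\, d\nu(y)$, that the assembled $\tau$ is finite (indeed $\tau(S) = \nu(Y)$), invariant, and mutually absolutely continuous with $\mu$. This measurable selection and disintegration argument, together with confirming that the factor type is a measurable function of $y$ so that the phrase ``only $II_1$ factors'' is unambiguous in the direct integral, constitutes the technical heart of the proof.
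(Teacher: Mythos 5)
Your overall architecture coincides with the paper's: statement (2) serves as the bridge, the equivalence $(1)\Leftrightarrow(2)$ is read off from the positive--null decomposition theory of \cite{wang:roy:stoev:2013} (the paper cites Theorem~\ref{thm:erg_charac_neveu} for this), and the equivalence $(2)\Leftrightarrow(3)$ rests on Theorem~\ref{thm:cntrl_n_erg_decomp} (central decomposition $=$ ergodic decomposition) together with part~(3) of Theorem~\ref{thm:factor_ergodic} (a free ergodic crossed product is of type $II_1$ if and only if the action is positive). Where you genuinely differ is in the organization of $(3)\Rightarrow(2)$: the paper argues by contradiction, showing that if the null part $\mathcal{N}$ has positive measure then $\Psi(\mathcal{N})$ has positive $\nu$-measure and the fibre factors over it fail to be of type $II_1$; you instead construct the invariant probability measure directly by integrating the fibrewise invariant measures $\tau_y$. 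Your direct route produces the measure explicitly, and your insistence on measurable selection of the densities (leveraged by the a.e.\ uniqueness of the invariant density on each ergodic component) is a point the paper glosses over in the analogous construction inside the proof of Theorem~\ref{thm:erg_charac}; on the other hand, your plan to prove that the factor type is a measurable function of $y$ is more than is needed --- the paper deliberately sidesteps this (see Remark~\ref{remark:positive_part_using_II_1}) by phrasing hypothesis and conclusion as almost-everywhere statements and working only with measurable invariant sets.

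There is, however, one step in your construction that fails as stated: the claim that $\tau:=\int_Y \tau_y\, d\nu(y)$ is finite, ``indeed $\tau(S)=\nu(Y)$''. Under the paper's normalization $\nu=\mu\circ\Psi^{-1}$ one has $\nu(Y)=\mu(S)$, which is infinite whenever $\mu$ is an infinite measure --- and this is the typical situation, since positivity of the action asserts only the existence of an \emph{equivalent} finite invariant measure, not finiteness of $\mu$ itself. The paper is alert to exactly this danger in the proof of Theorem~\ref{thm:erg_charac}, where it first shrinks $Y_0$ to a subset of finite $\nu$-measure before integrating; that option is not available to you here, because you need a measure equivalent to $\mu$ on \emph{all} of $S$. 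The repair is simple: since $\nu$ is $\sigma$-finite, choose a strictly positive $w\in\mathcal{L}^1(Y,\nu)$ with $\int_Y w\, d\nu=1$ and set $\tau:=\int_Y w(y)\,\tau_y\, d\nu(y)$. Reweighting by $w$ destroys neither invariance (each $S_y$ is invariant and each $\tau_y$ is invariant, so the weights pass through untouched) nor equivalence with $\mu$ (since $w>0$ and $\tau_y\sim\mu_y$ fibrewise), and now $\tau(S)=1$. With this correction your argument is complete.
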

\noindent 

Theorems~\ref{thm:erg_charac_intro} and ~\ref{thm:erg_part_intro} follow from Theorem~4.1 of \cite{wang:roy:stoev:2013} and Theorem~\ref{thm:cntrl_n_erg_decomp} below with the help of various facets of ergodic decomposition as given in Theorem~\ref{thm:erg_comp}. From the two starkly contrasting situations in these two theorems, we can  infer that roughly speaking, presence of more $II_1$ factors in the central decomposition of a {\emph{Rosi\'{n}ski group measure space construction} (i.e., group measure space construction corresponding to a nonsingular action arising in a Rosi\'{n}ski representation) is an indication of weaker ergodicity and hence stronger dependence for stationary \sas random fields indexed by $\mathbb{Z}^d$; see Remark~\ref{remark:LRD} below. In other words, we are carrying forward a program of \cite{samorodnitsky:2004a} (of linking long range dependence for such a field with ergodic theoretic properties of the underlying action) to the realm of operator algebra. 

Additionally, it transpires from Theorems~\ref{thm:erg_charac_intro} and \ref{thm:erg_part_intro} that under our assumptions, even a Rosi\'{n}ski group measure space construction carries full information about ergodicity (resp., about the complete absence of it) for a stationary \sas random field. In other words, if two stationary \sas random fields indexed by $\mathbb{Z}^{d_1}$ and $\mathbb{Z}^{d_2}$ have isomorphic Rosi\'{n}ski group measure space constructions (possibly with $d_1 \neq d_2$ because of \cite{connes:feldman:weiss:1981}), then one of them is ergodic (fully non-ergodic, resp.) if and only if the other one is so. 

Inspired by the discussions in the previous paragraph and the recent progress on $W^\ast$-superrigidity (a term coined by Sorin Popa; see, for instance, the survey paper of \cite{ioana:2018} and the references therein) of an action, we ask the following questions.

\begin{qn} \label{qn_on_superrigidity_intro}
	Does the minimal group measure space construction fully (or partially) remember the stationary \sas random field in some suitable sense? How about a Rosi\'{n}ski group measure space construction?
\end{qn}
\noindent While the complete answer to the above questions would be immensely challenging, they do open a Pandora's box of conjectures and open problems; see Section~\ref{sec:conj}. 

It would be awesome if we can put conditions on one (or both) of the random fields so that it is possible get a positive answer to Question~\ref{qn_on_superrigidity_intro}. We must admit that we are still quite far from doing so, with the only progress being Theorem~\ref{thm:rigidity_intro} below. Motivated by Question~\ref{qn_on_superrigidity_intro} and the rich theory of rigidity (and superrigidity) for actions, we introduce two related notions of $W^\ast$-rigidity (namely, $W^\ast_m$- and $W^\ast_R$-rigidities corresponding to \emph{minimal} and \emph{Rosi\'{n}ski} representations, respectively) for a property of stationary \sas fields (see Section~\ref{sec:Wstar_sas}) and establish the following result. 

\begin{thm} \label{thm:rigidity_intro} Ergodicity (equivalently, weak mixing) is a $W^\ast_R$-rigid (and hence a $W^\ast_m$-rigid) property for stationary \sas random fields indexed by $\mathbb{Z}^d$. Complete lack of ergodicity (as described in Theorem~\ref{thm:erg_part_intro}) is also a $W^\ast_R$-rigid (and hence a $W^\ast_m$-rigid) property for such random fields. In particular, both ergodicity and absolute non-egodicity are orbit equivalence rigid properties as well. 
\end{thm}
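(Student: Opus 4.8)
The plan is to reduce each of the two rigidity claims to the observation that the relevant probabilistic property coincides, via Theorems~\ref{thm:erg_charac_intro} and \ref{thm:erg_part_intro}, with a condition on the central decomposition that is manifestly an invariant of the isomorphism class of the von Neumann algebra; rigidity is then obtained by transporting that condition along the isomorphism. I would first unwind the definition of $W^\ast_R$-rigidity from Section~\ref{sec:Wstar_sas}: a property $P$ of stationary \sas fields is $W^\ast_R$-rigid if, whenever two such fields admit ergodically free Rosi\'{n}ski representations whose associated group measure space constructions are isomorphic as von Neumann algebras, the first field has $P$ exactly when the second does. The ergodically free hypothesis is precisely what licenses the use of Theorems~\ref{thm:erg_charac_intro} and \ref{thm:erg_part_intro}.

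For ergodicity, the crux is that condition (3) of Theorem~\ref{thm:erg_charac_intro}, namely that the group measure space construction does not admit a $II_1$ factor in its central decomposition, depends only on the isomorphism class. Indeed, any isomorphism $\Theta\colon M \to N$ carries the center $Z(M)$ onto $Z(N)$, hence induces (up to null sets) an isomorphism of the base measure spaces of the two direct integral decompositions over their centers, under which the fibre factors are identified isomorphically; since the type of a factor is an isomorphism invariant, a $II_1$ fibre occurs on a set of positive measure in the decomposition of $M$ if and only if the same holds for $N$. Combining this with the equivalence (1)$\iff$(3) of Theorem~\ref{thm:erg_charac_intro} applied to each field, I would conclude that isomorphic Rosi\'{n}ski group measure space constructions force the two fields to be simultaneously ergodic; equivalently (by Theorem~\ref{thm:erg_charac_intro}), simultaneously weakly mixing. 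The argument for complete lack of ergodicity is verbatim the same, now invoking the equivalence (1)$\iff$(3) of Theorem~\ref{thm:erg_part_intro}, whose condition (3)—that the central decomposition consists solely of $II_1$ factors—is an isomorphism invariant by the identical fibrewise reasoning.

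The remaining assertions are formal. Since \cite{rosinski:1995} showed that every minimal representation is a Rosi\'{n}ski representation, any pair of fields with isomorphic minimal group measure space constructions is in particular a pair with isomorphic Rosi\'{n}ski group measure space constructions; therefore $W^\ast_R$-rigidity of a property immediately yields its $W^\ast_m$-rigidity. Finally, orbit equivalence of the underlying actions implies $W^\ast$-equivalence of their group measure space constructions by the classical theorem of \cite{singer:1955}, so both properties, being $W^\ast_R$-rigid, are a fortiori orbit equivalence rigid.

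The step I would treat as the main obstacle is the claim that ``admitting a $II_1$ factor in the central decomposition'' is genuinely preserved under isomorphism. This rests on the essential uniqueness of the direct integral decomposition of a von Neumann algebra over its center (reduction theory) together with measurability of the fibrewise type map: one must verify that $\Theta$ descends to a measure-space isomorphism of the two bases intertwining the fibre-type data modulo null sets, so that the measured family of fibre types—and in particular the positive-measure occurrence of a $II_1$ fibre—is a bona fide invariant. Once this is secured, the rigidity conclusions follow directly from Theorems~\ref{thm:erg_charac_intro} and \ref{thm:erg_part_intro} as above.
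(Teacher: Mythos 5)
Your proposal is correct and follows essentially the same route as the paper: reduce both rigidity claims to the fact that (non-)occurrence of $II_1$ factors in the central decomposition is a von Neumann algebra isomorphism invariant, apply Theorems~\ref{thm:erg_charac_intro} and~\ref{thm:erg_part_intro}, and deduce $W^\ast_m$-rigidity and orbit equivalence rigidity formally from the implications ``minimal $\Rightarrow$ Rosi\'{n}ski'' and ``orbit equivalence $\Rightarrow$ $W^\ast$-equivalence.'' The only difference is cosmetic: where the paper disposes of the invariance step with an ``in particular'' (resting on the uniqueness clause of Theorem~\ref{thm:central_decomp}), you spell out the reduction-theory argument identifying centers, base spaces, and fibre types, which is a welcome elaboration of the same idea.
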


\noindent We actually conjecture (see Section~\ref{sec:final_remarks}) that many probabilistic properties of a stable random field are $W^\ast_m$-rigid, making our invariant a powerful one. 

This paper is organized as follows. In Sections~\ref{sec:erg_th} and \ref{sec:vNalg}, we give the prelimineries on ergodic theory (of nonsingular group actions) and operator algebra (of group measure space constructions), respectively. On the other hand, Section~\ref{sec:sas} focuses on a brief overview of stationary \sas random fields. Section~\ref{sec:main_results} is the key section of this paper, containing all the new results and their proofs. We conclude with Section~\ref{sec:final_remarks}, where we discuss how our results can be applied to a few examples, and state some conjectures and open problems. Some of these problems have recently been resolved; see Remark~\ref{remark:solns} below for the details.

\section{Ergodic Theory of Nonsingular Actions} \label{sec:erg_th}

In this section, we start with a brief overview of ergodic theory of nonsingular group actions. As stated above, any statement related to measure spaces should be thought of as true modulo null sets. Since the underlying group $G$ is countable, this will not lead to any measure theoretic difficulty. We start with the definition of nonsingular (also known as quasi-invariant) group action.
 
Recall that $G$ is a countable group with identity element $e$ and $(S, \mathcal{S}, \mu)$ is a $\sigma$-finite standard measure space. A collection of measurable maps $\phi_t : S \rightarrow S$ indexed by $t \in G$ is called a group action of $G$ on $S$ if
\begin{itemize}
	\item[1.] $\phi_e$ is the identity map on $S$, and
	\item[2.] $\phi_{uv} = \phi_v  \circ \phi_u$ for all $u, v \in G$.
\end{itemize}
Note that the order in which the two maps $\phi_v$ and $\phi_u$ appear in the above definition is important because $G$ is possibly noncommutative. In the usual notation for group actions, $\phi_t: s \mapsto (t^{-1}).s$ for each $t \in G$. 
\begin{definition}
A group action $\{\phi_t \}_{t\in G}$ of $G$ on $S$ is called \textbf{nonsingular} (also known as \textbf{quasi-invariant}) if $\mu \circ \phi_t \sim \mu$ for all $t \in G$. Here ``$\sim$'' denotes equivalence of measures.
\end{definition}
\noindent We refer to \cite{varadarajan:1970}, \cite{zimmer:1984}, \cite{krengel:1985} and \cite{aaronson:1997} for the ergodic theory of such actions. Note that measure-preserving group actions $\{\phi_t \}$ (i.e., $\mu \circ \phi_t = \mu$ for all $t \in G$) are clearly nonsingular but the converse is not true.

\subsection{Ergodic and Neveu Decompositions} \label{sec:two_decomp}

Recall that a nonsingular group action $\{\phi_t\}_{t\in G}$ on $(S, \mu)$ is called ergodic if its invariant $\sigma$-field is $\mu$-trivial, i.e., whenever $A \subseteq S$ is such that $A=\phi_t(A)$ (modulo $\mu$) for all $t \in G$, then either $\mu(A)=0$ or $\mu(A^c)=0$. The following result states that even when a nonsingular action is not ergodic, it can be decomposed into ``ergodic components'' in a ``measurable way''. See, for example, Corollary~6.9 of \cite{schmidt:1977}.

\begin{thm} [Existence of Ergodic Decomposition] \label{thm:erg_comp} Let $\{\phi_t \}_{t\in G}$ be a nonsingular action of a countable group $G$ on a $\sigma$-finite standard measure space $(S, \mathcal{S}, \mu)$. Then there exists another $\sigma$-finite standard Borel space $(Y, \mathcal{Y})$, a measurable map $\Psi:S \to Y$,  and a family $\{\mu_y: y \in Y\}$ of $\sigma$-finite measures on $(S, \mathcal{S})$ such that
\begin{enumerate}
\item for each $B \in \mathcal{S}$, $y \mapsto \mu_y(B)$ is a measurable map of $Y$ into $[0, \infty]$;
\item for each $B \in \mathcal{S}$, $$\mu(B) = \int_Y \mu_y(B) d\nu(y),$$
          where $\nu := \mu \circ \Psi^{-1}$;
\item $\mu_y\big(\Psi^{-1}(\{y\}^c)\big)=0$ for each $y \in Y$ (in particular, $\mu_y$ and $\mu_{y^\prime}$ are mutually singular whenever $y \neq y^\prime$);
\item $\{\phi_t \}_{t\in G}$ is a nonsingular and ergodic action restricted to each $(S_y, \mu_y)$, where $S_y:=\Psi^{-1}(\{y\})$;
\item if $\mu$ is $\{\phi_t \}_{t\in G}$-invariant, then so is each $\mu_y$;
\item if $\mu$ is equivalent to a $\{\phi_t \}_{t\in G}$-invariant $\sigma$-finite measure $\tau$, then for each $y \in Y$, $\mu_y \sim \tau|_{S_y}$.
\end{enumerate}
\end{thm}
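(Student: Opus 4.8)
The plan is to construct the decomposition out of the invariant $\sigma$-field of the action and then to disintegrate $\mu$ along it. First I would set $\mathcal{I} := \{A \in \mathcal{S} : \mu(A \triangle \phi_t(A)) = 0 \text{ for all } t \in G\}$, the $\sigma$-field of sets that are invariant modulo $\mu$; this is a genuine sub-$\sigma$-field because $G$ is countable. Since $(S,\mathcal{S},\mu)$ is $\sigma$-finite and standard, $\mathcal{I}$ is countably generated modulo $\mu$-null sets, so I would fix a countable generating family and encode it as a single measurable map $\Psi : S \to Y$ into a standard Borel space $(Y,\mathcal{Y})$ with $\sigma(\Psi) = \mathcal{I}$ modulo null sets (for instance by mapping each point to the sequence of indicator values of the generators inside $\{0,1\}^{\mathbb{N}}$). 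Then $\nu := \mu\circ\Psi^{-1}$ is the relevant base measure and $S_y := \Psi^{-1}(\{y\})$ are the fibres.

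Second, I would obtain the family $\{\mu_y\}$ by disintegrating $\mu$ along $\Psi$. Reducing to the finite case via an equivalent probability measure $P \sim \mu$ (which exists by $\sigma$-finiteness), applying the standard disintegration theorem for a measurable map into a standard Borel space, and re-weighting by $d\mu/dP$, produces $\sigma$-finite fibre measures $\mu_y$, each concentrated on $S_y$, with $\mu = \int_Y \mu_y \, d\nu(y)$; the normalisation of the $\mu_y$ can be pinned down by a choice of reference function. Properties (1)--(3) are then immediate: measurability of $y \mapsto \mu_y(B)$ and the integral identity are exactly what the disintegration provides, and (3) holds because $\mu_y$ lives on $S_y$ while distinct fibres are disjoint.

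Third, and this is the heart of the argument, I would verify (4). Every $\mathcal{I}$-measurable function is $\phi_t$-invariant modulo $\mu$, so $\Psi \circ \phi_t = \Psi$ modulo $\mu$, which means each $\phi_t$ maps $S_y$ into itself for $\nu$-a.e.\ $y$. Applying the disintegration to the equivalent measure $\mu\circ\phi_t$ and invoking uniqueness of disintegration, I would identify $(\mu\circ\phi_t)_y = \mu_y\circ\phi_t$ and hence deduce $\mu_y\circ\phi_t \sim \mu_y$ for $\nu$-a.e.\ $y$; intersecting over the countably many $t \in G$ gives nonsingularity of the restricted action on $\nu$-almost every fibre. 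For ergodicity I would argue by contradiction: if the set of $y$ on which $\mu_y$ is non-ergodic had positive $\nu$-measure, a measurable selection (Jankov--von Neumann type) would produce, for each such $y$, a set $A_y \subseteq S_y$ with $0 < \mu_y(A_y) < \mu_y(S_y)$ invariant under the restricted action. The union $A = \bigcup_y A_y$ is then globally invariant modulo $\mu$, hence $\mathcal{I}$-measurable, hence of the form $\Psi^{-1}(C)$ modulo null sets, so each $A_y = A \cap S_y$ is all of $S_y$ or empty --- contradicting $0 < \mu_y(A_y) < \mu_y(S_y)$.

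Finally, (5) and (6) follow by tracking Radon--Nikodym derivatives through the disintegration: if $\mu$ is invariant then $d(\mu\circ\phi_t)/d\mu = 1$ descends to $d(\mu_y\circ\phi_t)/d\mu_y = 1$ on $\nu$-a.e.\ fibre, giving invariance of $\mu_y$; and if $\mu \sim \tau$ with $\tau$ an invariant $\sigma$-finite measure, then disintegrating $\tau$ along the same $\Psi$ (its invariance guaranteeing it respects the fibres) and comparing densities yields $\mu_y \sim \tau|_{S_y}$. I expect the main obstacle to be the ergodicity claim in step three: carrying out the measurable selection of the $A_y$ uniformly in $y$, and checking that the glued set $A$ is genuinely invariant (not merely invariant fibre-by-fibre) while respecting the mod-null conventions throughout, is the delicate part, and it is precisely where the standard Borel and $\sigma$-finite hypotheses are essential.
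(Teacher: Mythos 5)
You should first note that the paper does not actually prove Theorem~\ref{thm:erg_comp}: it is quoted as a known result with a pointer to Corollary~6.9 of \cite{schmidt:1977} (see also Greschonig and Schmidt, \emph{Ergodic decomposition of quasi-invariant probability measures}, Colloq.\ Math.\ 2000). So your proposal has to be measured against the standard proofs in that literature. Your overall architecture is the classical one, and much of it is sound: the invariant $\sigma$-field $\mathcal{I}$ is countably generated modulo $\mu$-null sets because $(S,\mathcal{S},\mu)$ is standard, coding its generators gives $\Psi:S\to\{0,1\}^{\mathbb{N}}$ with $\sigma(\Psi)=\mathcal{I}$ mod null, disintegrating an equivalent probability measure $P$ and reweighting produces the fibre measures, and your uniqueness-of-disintegration argument for quasi-invariance of $\nu$-a.e.\ fibre (comparing the disintegration of $\mu\circ\phi_t$ with that of $w_t\,d\mu$, where $w_t=d(\mu\circ\phi_t)/d\mu$) is correct.

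The genuine gap is the ergodicity step, and it is not merely ``delicate'': as written it fails. Jankov--von Neumann uniformization selects \emph{points} from analytic subsets of products of standard Borel spaces; what you need to select are invariant Borel \emph{sets} $A_y\subseteq S_y$, and the collection of Borel subsets of $S$ carries no standard Borel structure, so no uniformization theorem applies to it. To rescue the idea one must recode a witness of non-ergodicity of $(S_y,\mu_y)$ as a point of a standard Borel space --- for instance as a finite measure $\lambda$ such that $\lambda$ and $P_y-\lambda$ are mutually singular (so that $d\lambda/dP_y$ is an indicator), $0<\lambda(S)<P_y(S)$, and $\lambda$ is quasi-invariant with the \emph{same} Radon--Nikodym cocycle as the fibre measure --- then prove that this witness set is analytic, select universally measurably, and glue the selected measures into one global invariant set via a jointly measurable version of $(y,s)\mapsto (d\lambda_y/dP_y)(s)$. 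That chain of verifications is exactly the hard content of the theorem, and it is why the cited literature proceeds differently: ergodic quasi-invariant measures with a prescribed cocycle are identified as the extreme points of a Choquet simplex, and the decomposition comes from Choquet theory, avoiding any selection of sets. (For a general countable $G$ there is also no pointwise ergodic theorem, so the shortcut available for p.m.p.\ actions of $\mathbb{Z}^d$ or amenable groups is closed.) Two smaller wrinkles: your construction yields the integral identity with base measure $P\circ\Psi^{-1}$ rather than $\nu=\mu\circ\Psi^{-1}$ as in the statement --- when $\mu(S)=\infty$ and the action is ergodic, $\mu\circ\Psi^{-1}$ is a single infinite atom, so some normalization convention is unavoidable --- and all your conclusions hold only for $\nu$-a.e.\ $y$, so a final cleanup discarding a null set of fibres is needed to obtain the assertions ``for each $y\in Y$''.
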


\begin{definition} \label{defn:erg_decomp}
Suppose $\{\phi_t \}_{t\in G}$ is a nonsingular action of a countable group $G$ on a $\sigma$-finite standard measure space $(S, \mathcal{S}, \mu)$. Then the collection $\{(S_y, \mu_y): y \in Y\}$ obtained in Theorem~\ref{thm:erg_comp} is called the ergodic decomposition of $S$ wrt the action $\{\phi_t \}$.
\end{definition}	
		
It can be shown that the ergodic decomposition is unique up to isomophism. In some sense, it gives the ``finest possible'' partition of $S$ into invariant components. Another partition (possibly much coarser) into two invariant components is given by the Neveu decomposition, which is described below. This decomposition is closely connected to ergodicity (as well as to complete non-ergodicity) of the stationary S$\alpha$S random fields indexed by $G=\mathbb{Z}^d$. Hence, it is the backbone of proving a few results of this paper. 

The Neveu decomposition is obtained by applying Lemma~2.2 and Theorem 2.3~(i) in \cite{wang:roy:stoev:2013} in the case of any countable group $G$ (not just $\mathbb{Z}^d$). This yields the partition  $S=\PP\cup \NN$, where the set $\PP$ is the largest (modulo $\mu$) $\{\phi_t\}$-invariant set where one can have a finite measure preserved by $\{\phi_t\}$ and equivalent to $\mu$, and $\NN=\PP^c$.  The subsets $\PP$ and $\NN$ of $S$ are known as the \emph{positive part} and the \emph{null part} of $\{\phi_t\}_{t \in G}$, respectively.

\subsection{Conjugacy and Orbit Equivalence} \label{sec:two_equiv_reln}

In this subsection, we define two equivalence relations on the space of all nonsingular group actions. One more will be introduced in Section~\ref{sec:Wstar_super}. All of these will play significant roles in this work. 

\begin{definition} \label{defn:conj_oe}
	Let $\{\phi^{(i)}_t\}_{t \in G_i}$ be a nonsingular action of a countable group $G_i$ on a $\sigma$-finite standard Borel space $(S_i, \mathcal{S}_i, \mu_i)$ for each $i=1, \, 2$. These two actions  $\{\phi^{(1)}_t\}$ and $\{\phi^{(2)}_t\}$ are called
	
	\begin{enumerate}
		\item \textbf{conjugate} if there exist a group isomosphism $\alpha: G_1 \to G_2$ and a bijection $h: S_1 \to S_2$ such that both $h$ and $h^{-1}$ are measurable, $\mu_1 \circ h^{-1} \sim \mu_2$, and for all $t \in G$,
		\begin{equation}
		h \circ \phi^{(1)}_t = \phi^{(2)}_{\alpha(t)} \circ h \; \;\; ; \label{eqn_conjugacy}
		\end{equation}
		
		\item \textbf{orbit equivalent} if there exist a group isomosphism $\alpha: G_1 \to G_2$ and a bijection $h: S_1 \to S_2$ such that both $h$ and $h^{-1}$ are measurable, $\mu_1 \circ h^{-1} \sim \mu_2$, and for $\mu_1$-almost all $s_1 \in S_1$,
		\begin{equation}
		h\big(\{\phi^{(1)}_t(s_1): t \in G\}\big) =\big\{\phi^{(2)}_{\alpha(t)}(h(s_1)): t \in G\big\}. \label{eqn_orbit_equiv}
		\end{equation}
	\end{enumerate}
Furthermore, if $G_1 = G_2$, then unless mentioned otherwise, $\alpha$ is taken as the identity isomorphism in the above two definitions. 
\end{definition}

Clearly \eqref{eqn_orbit_equiv} follows from \eqref{eqn_conjugacy} and hence conjugacy implies orbit equivalence, but the converse does not hold. One should think of conjugacy as the isomorphism in the category of nonsingular group actions while the orbit equivalence is a much weaker notion that only demands correspondence of orbits. As we will see, an even weaker notion of $W^\ast$-equivalence (see Definition~\ref{defn:Wstar_equiv} below) is useful enough in ergodic theory and operator algebra, and hence for stable fields as well. The equivalence relations in Definition~\ref{defn:conj_oe} can also be defined for measure-preseving actions in a similar fashion although in this case, the measures $\mu_1 \circ h^{-1}$ and $\mu_2$ need to be equal, not just equivalent.

\section{von Neumann Algebras} \label{sec:vNalg}
	
This section is devoted to von Neumann algebras and more specifically, to the  \emph{group measure space construction} corresponding to a nonsingular action. For detailed discussions on von Neumann algebras and proofs of the results stated in this section, we refer the readers to \cite{bratteli:robinson:1987}, \cite{sunder:1987}, \cite{jones:2009}, \cite{peterson:2013} and the references therein.

Let $\mathcal{H}$ be a separable Hilbert space over $\mathbb{C}$ and $\mathcal{B}(\mathcal{H})$ be the space of all bounded operators on $\mathcal{H}$. The following elegant result initiated the study of von Neumann algebras.

\begin{thm}[von Neumann's Bicommutant Theorem] \label{thm:bicomm} Suppose $M$ is a $\ast$-subalgebra of $\mathcal{B}(\mathcal{H})$ containing $1$, the identity operator. Then the following are equivalent:
\begin{enumerate}
  \item $M$ is closed in the weak operator topology.
  \item $M$ is closed in the strong operator topology.
  \item $M=(M^\prime)^\prime =: M^{\prime\prime}$.
\end{enumerate}
Here $M^\prime := \{T \in \mathcal{B}(\mathcal{H}): TA = AT \mbox{ for all } A \in M\}$ is the commutant of $M$.
\end{thm}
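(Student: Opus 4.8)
The plan is to prove the three implications $(3)\Rightarrow(1)\Rightarrow(2)\Rightarrow(3)$, of which the first two are formal and the last carries all the content. For $(3)\Rightarrow(1)$, I would observe that for any subset $N\subseteq\mathcal{B}(\mathcal{H})$ the commutant $N'$ is closed in the weak operator topology, being the intersection over $A\in N$ and $\xi,\eta\in\mathcal{H}$ of the sets $\{T:\langle(TA-AT)\xi,\eta\rangle=0\}$, each of which is WOT-closed; hence $M''=(M')'$ is WOT-closed, and $M=M''$ gives $(1)$. For $(1)\Rightarrow(2)$, since the weak operator topology is coarser than the strong operator topology, every WOT-closed set is SOT-closed. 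So the task reduces to $(2)\Rightarrow(3)$. Here one inclusion is automatic: every $A\in M$ commutes with every element of $M'$, so $M\subseteq M''$. The substance is the reverse inclusion $M''\subseteq M$, which, since $M$ is assumed SOT-closed, amounts to showing that $M$ is SOT-dense in $M''$.

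To prove SOT-density, I would first treat approximation against a single vector. Fix $T\in M''$, $\xi\in\mathcal{H}$, and $\epsilon>0$, and set $K:=\overline{M\xi}$, the closed subspace generated by $\{A\xi:A\in M\}$; note $\xi\in K$ because $1\in M$. Let $p$ be the orthogonal projection onto $K$. The key point is that $p\in M'$: the subspace $K$ is invariant under $M$ since $M$ is an algebra, and it is invariant under $M^\ast=M$ since $M$ is a $\ast$-algebra, so $K$ reduces $M$ and $p$ commutes with every element of $M$. Because $T\in M''=(M')'$ commutes with $p$, we get $T\xi=Tp\xi=pT\xi\in K$, and hence there exists $A\in M$ with $\|(T-A)\xi\|<\epsilon$.

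To upgrade this to finitely many vectors $\xi_1,\dots,\xi_n$ (a basic SOT-neighbourhood of $T$), I would pass to the amplification on $\mathcal{H}^{\oplus n}$, letting $M$ act diagonally as $A\mapsto A\oplus\cdots\oplus A$. The crucial algebraic fact is that the commutant of this amplified algebra is the set of $n\times n$ operator matrices with entries in $M'$, from which one computes that its double commutant consists exactly of the diagonal operators $T\oplus\cdots\oplus T$ with $T\in M''$. Applying the single-vector result on $\mathcal{H}^{\oplus n}$ to the vector $(\xi_1,\dots,\xi_n)$ and the amplified operator then yields a single $A\in M$ with $\|(T-A)\xi_i\|<\epsilon$ simultaneously for all $i$. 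Thus every basic SOT-neighbourhood of $T$ meets $M$, so $T\in\overline{M}^{\,\mathrm{SOT}}=M$, giving $M''\subseteq M$ and hence $(3)$.

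The main obstacle is the amplification step: one must set up $\mathcal{H}^{\oplus n}$ correctly, verify that the commutant of the diagonal copy of $M$ is the matrix algebra $M_n(M')$, and deduce the precise shape of the double commutant — this matrix computation, rather than the projection trick, is where care is needed. The single-vector density argument, by contrast, is short once one notices that $p=\mathrm{proj}_{\overline{M\xi}}$ lands in $M'$ precisely because $M$ is self-adjoint.
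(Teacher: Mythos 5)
Your proposal is correct, and it is the classical proof: commutants of arbitrary sets are WOT-closed, WOT-closed implies SOT-closed, and the substance of $(2)\Rightarrow(3)$ is handled by the projection $p=\mathrm{proj}_{\overline{M\xi}}\in M'$ (using that $M$ is a unital $\ast$-algebra) together with the $n$-fold amplification whose commutant is the matrix algebra over $M'$. Note that the paper itself offers no proof of Theorem~\ref{thm:bicomm} --- it is stated as standard background with the proof deferred to the cited references (e.g.\ the lecture notes of Jones and of Peterson) --- and your argument coincides with the standard proof given there, so there is nothing to compare beyond confirming its correctness.
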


\noindent Observe that the first two are analytic properties while the third one is an algebraic one. Thus, von Neumann's beautiful result binds the two subjects nicely and gives rise to the following important notion, which can also be defined more abstractly (not just as a subalgebra of $\mathcal{B}(\mathcal{H})$) but the more concrete definition below will serve our purpose.

\begin{definition} A unital $\ast$-subalgebra of $\mathcal{B}(\mathcal{H})$ satisfying one (and hence all) of the equivalent conditions in  Theorem~\ref{thm:bicomm} is called a von Neumann algebra.
\end{definition}

\subsection{Factors and Central Decomposition}

Note that if $M$ is a von Neumann algebra, then so is $M^\prime$. We now define a very important special case that serves as a building block in the investigation of von Neumann algebras.

\begin{definition} A von Neumann algebra $M$ is called a factor if $Z(M):=M \cap M^\prime = \mathbb{C} 1$ (i.e., the centre is trivial).
\end{definition}

\noindent The following result of von Neumann states that any von Neumann algebra can be decomposed as a direct sum (or more generally, direct integral) of factors in a unique fashion.

\begin{thm}[von Neumann] \label{thm:central_decomp} Let $M$ be a von Neumann algebra. Then there exists a $\sigma$-finite measure space $(Y, \mathcal{Y}, \nu)$ and von Neumann algebras $\{M_y: y \in Y\}$ such that the direct integral decomposition
\begin{equation}
  M = \int_Y M_y\, \nu(dy)  \label{decomp:central}
\end{equation}
holds and for $\nu$-almost all $y \in Y$, $M_y$ is a factor. Moreover, this decomposition is almost surely unique up to isomorphism.
\end{thm}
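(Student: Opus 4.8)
The plan is to let the entire decomposition be governed by the centre $Z(M) = M \cap M'$. First I would observe that $Z(M)$ is itself a von Neumann algebra, being the intersection of the two von Neumann algebras $M$ and $M'$, and that it is abelian, since every element of $Z(M)$ commutes with all of $M$ and in particular with every other element of $Z(M)$. Because $\mathcal{H}$ is separable, the spectral (multiplicity) theory for abelian von Neumann algebras identifies $Z(M)$ $\ast$-isomorphically with $L^\infty(Y,\nu)$ for some standard $\sigma$-finite measure space $(Y,\mathcal{Y},\nu)$. I would fix such an isomorphism once and for all, thereby realising $Z(M)$ as the algebra of diagonal (multiplication) operators indexed by $Y$.

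The next step is von Neumann's reduction theory. The realisation of $Z(M)\cong L^\infty(Y,\nu)$ as a diagonal algebra produces a measurable field of Hilbert spaces $\{\mathcal{H}_y\}_{y\in Y}$ together with an identification
\[
\mathcal{H} \;=\; \int_Y \mathcal{H}_y\,\nu(dy),
\]
under which $g\in L^\infty(Y,\nu)$ acts fibrewise as $(\xi_y)_y\mapsto (g(y)\xi_y)_y$. Since $Z(M)\subseteq M'$, every operator of $M$ commutes with this diagonal algebra. The central theorem of reduction theory then characterises the operators commuting with the diagonal algebra as exactly the decomposable operators $T=\int_Y T_y\,\nu(dy)$. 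Applying this to a countable strongly dense family in $M$ and closing up, I obtain a measurable field $\{M_y\}_y$ of von Neumann algebras on the fibres with
\[
M \;=\; \int_Y M_y\,\nu(dy).
\]

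For the factoriality, I would use that the centre of a direct integral is the direct integral of the centres, $Z\!\left(\int_Y M_y\,\nu(dy)\right)=\int_Y Z(M_y)\,\nu(dy)$. By construction the left-hand side equals $Z(M)$, which we arranged to be precisely the diagonal algebra $L^\infty(Y,\nu)$. As the diagonal scalars $\mathbb{C}1_{\mathcal{H}_y}$ are always contained in $Z(M_y)$, equality of the two direct integrals forces $Z(M_y)=\mathbb{C}1_{\mathcal{H}_y}$ for $\nu$-almost every $y$, i.e.\ $M_y$ is a factor a.e. Uniqueness is then a bookkeeping matter: the whole construction is canonically attached to $Z(M)$, so it reduces to uniqueness of the spectral data of the abelian algebra $Z(M)$ — the measure class $[\nu]$ on $Y$ and the associated multiplicity field, which are invariants determined up to measurable isomorphism and up to $\nu$-null sets. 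Transporting this through the reduction theory yields almost-sure uniqueness of $\{M_y\}$ up to isomorphism.

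The genuinely substantial content, and the step I expect to be the main obstacle, is the reduction theory itself: setting up measurable fields of Hilbert spaces and of operators, proving that commutation with the diagonal algebra is equivalent to decomposability, and verifying enough measurability that $\{M_y\}$ really is a measurable field of von Neumann algebras. The a.e.\ factoriality is a clean corollary of the identity ``centre of the integral equals integral of the centres,'' but that identity rests on the very same measurability machinery. Everything else — the abelian structure theorem and the uniqueness bookkeeping — is standard once separability of $\mathcal{H}$ is in hand.
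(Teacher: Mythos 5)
The paper offers no proof of this statement: it is von Neumann's classical central decomposition theorem, stated as background, and the paper simply points to \cite{knudby:2011} for an exposition and proof. Your sketch follows exactly that standard reduction-theory route --- identify $Z(M)$ with $L^\infty(Y,\nu)$ via the structure theorem for abelian von Neumann algebras on a separable Hilbert space, disintegrate $\mathcal{H}$ so that $Z(M)$ becomes precisely the diagonal algebra, realize $M$ inside the decomposable operators, and invoke $Z\left(\int_Y M_y\,\nu(dy)\right)=\int_Y Z(M_y)\,\nu(dy)$ to get a.e.\ factoriality --- so it is correct in outline and coincides with the approach of the paper's cited source, with the genuinely hard content residing, as you yourself note, in the measurable-field machinery of reduction theory rather than in any step you have glossed over.
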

See, for example, \cite{knudby:2011} for a nice exposition of direct integrals of von Neumann algebras and a proof of Theorem~\ref{thm:central_decomp}. This decomposition is known as the central decomposition and it reduces the study of von Neumann algebras to the study of factors. As will be seen later, the central decomposition will play a key role in this paper. We would also like to mention that if $Y$ is countable with $\nu$ being the counting measure, then the direct integral in \eqref{decomp:central} reduces to a direct sum.

It is well-known that any von Neumann algebra $M$ has a lot of projections, which generate $M$. It is possible to define an equivalence relation on these projections such that the equivalence classes form a partially ordered set, which becomes totally ordered whenever $M$ is a factor. Depending on the order structure of this totally ordered set and existence/non-existence of trace (defined below), factors can be classified into various types; see, for example, \cite{sunder:1987}, \cite{jones:2009}, \cite{peterson:2013} for details. In this paper, we shall only need the following type of factors with infinitely many (in fact, uncountably many) equivalence classes of projections. 

\begin{definition} A factor $M$ is said to be of type $II_1$ if it is infinite-dimensional and it admits a normalized trace, i.e., there exists an ultraweakly continuous linear functional $tr: M \to \mathbb{C}$ satisfying $tr(1)=1$, $tr(ab) = tr(ba)$ and $tr(a^\ast a) \geq 0$ for all $a, b \in M$.
\end{definition}

At this point, we would like to mention that commutative von Neumann algebras are just the $\LLL^\infty$-spaces and hence the study of von Neumann algebras is regarded as ``non-commutative measure theory''. On the other hand, the investigation of $II_1$ factors form an integral part of ``non-commutative probability theory''. Therefore, it is not at all unreasonable to expect that these rich structures will be closely tied up with the probabilistic properties of stable random fields. In particular, the following terminology coined in this paper will be essential in characterizing ergodicity (as well as the absolute lack of it) for stationary S$\alpha$S random fields.

\begin{definition} \label{no_II_1_factor} (1)~We say that a von Neumann algebra $M$ does not admit a $II_1$ factor in its central decomposition \eqref{decomp:central} if for $\nu$-almost all $y \in Y$, $M_y$ is a not a $II_1$ factor. 
	
\noindent (2)~On the other hand, we say that $M$ admits only $II_1$ factor(s) in its central decomposition \eqref{decomp:central} if for $\nu$-almost all $y \in Y$, $M_y$ is a $II_1$ factor.
\end{definition}

\noindent As we shall see in Theorem~\ref{thm:erg_charac}, a stationary S$\alpha$S random field indexed by $G=\mathbb{Z}^d$ will be ergodic (completely non-ergodic, resp.) if and only if a specific von Neumann algebra \emph{does not admit a $II_1$ factor} (resp., \emph{admits only $II_1$ factor(s)}) in its central decomposition. This von Neumann algebra is defined in the next subsection along with a discussion on its relation to ergodic theoretic properties of nonsingular group actions.

\subsection{Group Measure Space Construction} \label{subsec:grp_ms_constr}

Given a nonsingular group action $\{\phi_t\}_{t \in G}$ on a $\sigma$-finite standard measure space $(S, \mathcal{S}, \mu)$, we can also construct a von Neumann algebra that reflects the ergodic theoretic properties of the action. This was first introduced by \cite{murray:vonneuman:1936} in the context of measure-preserving group actions.

The $G$-action $\{\phi_t\}$ lifts to the space of all real-valued measurable functions on $S$ by
\[
\sigma_t g = g \circ \phi_t, \;\, t \in G.
\]
This lifted action preserves the $\LLL^\infty$-norm but not other $\LLL^p$-norms. However, for each $t \in G$, $\pi_t: \LLL^2(S, \mu) \to \LLL^2(S, \mu)$ given by
\[
(\pi_t g)(s) = g \circ \phi_t(s) \left(\frac{d \mu \circ \phi_t}{d\mu}(s)\right)^{1/2}, \; s \in S
\]
defines an isometry. The unitary representation $\{\pi_t\}_{t \in G}$ of $G$ inside $\LLL^2(S, \mu)$ is called the Koopman representation.

Using the cocycle relationship
\[
\frac{d \mu \circ \phi_{uv}}{d\mu} = \frac{d \mu \circ \phi_{u}}{d\mu} \, \sigma_u\left(\frac{d \mu \circ \phi_{v}}{d\mu}\right), \;\; u, v \in G,
\]
one gets that for all $a \in \LLL^\infty(S, \mu)$ (thought of as acting on $\LLL^2(S, \mu)$ by multiplication), for all $t \in G$ and for all $g \in \LLL^2(S, \mu)$,
\begin{equation}
(\pi_t \, a \, \pi_{t^{-1}} g)(s) = ((\sigma_t a) g)(s), \;\; s \in S. \label{eqn:crossed_product}
\end{equation}
In other words, the Koopman representation ``normalizes'' $\LLL^\infty(S, \mu)$ inside $\mathcal{B}(\LLL^2(S, \mu))$. The group measure space construction is a space where the crossed product relation \eqref{eqn:crossed_product} is internalized (see \eqref{eqn:crossed_product_int} below). The details of this construction are given here.

Consider the von Neumann algebra 
$$
\mathcal{B}(l^2(G) \otimes \LLL^2(S, \mu)) = \overline{\mathcal{B}(l^2(G)) \otimes \mathcal{B}(\LLL^2(S, \mu))}
$$ 
(with the closure being taken with respect to the weak/strong operator topology). Define a representation of $G$ by $t \mapsto u_t:= \lambda_t \otimes \pi_t$, where $\{\lambda_t\}$ is the left regular representation and $\{\pi_t\}$ is the Koopman representation. We also represent $\LLL^\infty(S, \mu)$ by $a \mapsto 1 \otimes \mathcal{M}_a$, where $\mathcal{M}_a$ is the multiplication (by $a$) operator on $\LLL^2(S, \mu)$. It can be checked that the following ``internal'' crossed product relation holds:
\begin{equation}
u_t (1 \otimes \mathcal{M}_a) u_{t^{-1}} = 1 \otimes \mathcal{M}_{\sigma_t a}\,.  \label{eqn:crossed_product_int}
\end{equation}
Define the \emph{group measure space construction} (also known as \emph{crossed product construction}) as
\[
\LLL^\infty(S, \mu) \rtimes G := \{u_t, 1 \otimes \mathcal{M}_a: t \in G, \, a \in \LLL^\infty(S, \mu)\}^{\prime\prime}.
\]

It can be shown that the crossed product relation \eqref{eqn:crossed_product_int} implies that any $x \in \LLL^\infty(S, \mu) \rtimes G$ can be uniquely written as $x = \sum_{t \in G} a_t u_t$ with $\{a_t: t \in G\} \subseteq \LLL^\infty(S, \mu)$. Thus, we can view $x$ as a $|G| \times |G|$ matrix with entries coming from $\LLL^\infty(S, \mu)$ that are the same along each left group-diagonal. To understand the connection with ergodic theory, let us recall that a nonsingular $G$-action $\{\phi_t\}$ on $(S, \mathcal{S}, \mu)$ is called free if $\mu$-almost all stabilizers are trivial, i.e., 
$$
\mu\big(\{s \in S: \phi_t(s)=s \mbox{ for some }t \neq e\}\big)=0. 
$$
The following result illustrates the relation between ergodic theory and group measure space construction through the underlying nonsingular action.

\begin{thm} \label{thm:factor_ergodic} The following results hold for a nonsingular $G$-action $\{\phi_t\}$ and the corresponding group measure space construction defined above. 
	\begin{enumerate} 
              \item If the action $\{\phi_t\}_{t \in G}$ is free and ergodic, then $\LLL^\infty(S, \mu) \rtimes G$ is a factor.
              \item If $\LLL^\infty(S, \mu) \rtimes G$ is a factor, then $\{\phi_t\}_{t \in G}$ is ergodic.
              \item If $\{\phi_t\}_{t \in G}$ is free and ergodic, then the factor  $\LLL^\infty(S, \mu) \rtimes G$ is of type $II_1$ if and only if $\{\phi_t\}_{t \in G}$ is a positive action (i.e., its null part $\NN$ is of zero $\mu$-measure).
            \end{enumerate}
Furthermore, if the two nonsingular actions (not necessarily of the same group) are orbit-equivalent, then the corresponding group measure space constructions are isomorphic as von Neumann algebras. 
\end{thm}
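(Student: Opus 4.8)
The plan is to work with the canonical Fourier expansion $x=\sum_{t\in G}a_tu_t$ (with $a_t\in\LLL^\infty(S,\mu)$) recorded above, together with the faithful normal conditional expectation $E:\LLL^\infty(S,\mu)\rtimes G\to\LLL^\infty(S,\mu)$, $E\big(\sum_t a_tu_t\big)=a_e$. Throughout I identify $a\in\LLL^\infty(S,\mu)$ with $1\otimes\mathcal{M}_a$, so \eqref{eqn:crossed_product_int} reads $u_tau_{t^{-1}}=\sigma_ta$. I would prove (1) by computing $Z(M)=M\cap M'$ for $M:=\LLL^\infty(S,\mu)\rtimes G$. Writing a central element as $x=\sum_t a_tu_t$ and imposing $xb=bx$ for every $b\in\LLL^\infty(S,\mu)$ gives, via \eqref{eqn:crossed_product_int}, the relations $a_t\,(\sigma_tb-b)=0$ for all $b$ and all $t$. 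For $t\neq e$ freeness makes $\{s:\phi_t(s)=s\}$ a $\mu$-null set, so one can choose $b$ separating $s$ from $\phi_t(s)$ and conclude $a_t=0$; hence $x\in\LLL^\infty(S,\mu)$, i.e. $\LLL^\infty(S,\mu)$ is maximal abelian in $M$ and in particular $Z(M)\subseteq\LLL^\infty(S,\mu)$. Imposing now that $x=a_e=:a$ commute with every $u_t$ yields $\sigma_ta=a$, so $a$ is $\{\phi_t\}$-invariant, and ergodicity forces $a$ constant; thus $Z(M)=\mathbb{C}1$ and $M$ is a factor. Statement (2) is the contrapositive of the last step and needs no freeness: if $\{\phi_t\}$ is not ergodic there is an invariant set $B$ with $\mu(B)>0<\mu(B^c)$, and then $1\otimes\mathcal{M}_{\one_B}$ is a nontrivial projection commuting with every $b\in\LLL^\infty(S,\mu)$ and (by invariance and \eqref{eqn:crossed_product_int}) with every $u_t$, hence lies in $Z(M)$, contradicting factoriality.

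For (3), with $M$ already a factor by (1), the bridge to the positive/null dichotomy is a trace. If $\{\phi_t\}$ is positive, choose a $\{\phi_t\}$-invariant probability measure $\tau\sim\mu$ and set $tr(x):=\int_S E(x)\,d\tau=\int_S a_e\,d\tau$. Normalisation $tr(1)=\tau(S)=1$, and positivity and faithfulness (via $tr(x^\ast x)=\sum_t\int_S|a_t|^2\,d\tau$, using $\tau\sim\mu$) are routine; the trace identity $tr(xy)=tr(yx)$ reduces to monomials $x=au_s$, $y=bu_t$, is nonzero only when $t=s^{-1}$, and there reads $\int_S a\,(b\circ\phi_s)\,d\tau=\int_S b\,(a\circ\phi_{s^{-1}})\,d\tau$, which is exactly invariance of $\tau$ under $\phi_s$. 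Since $G$ is infinite (as in $G=\mathbb{Z}^d$) the $u_t$ are $tr$-orthonormal, so $M$ is infinite-dimensional, and an infinite-dimensional factor admitting a normalised trace is of type $II_1$. Conversely, if $M$ is $II_1$ its normalised trace restricts to a faithful normal state on $\LLL^\infty(S,\mu)$, hence to a probability measure $\tau\sim\mu$; applying the trace property to $u_tau_{t^{-1}}=\sigma_ta$ gives $tr(\sigma_ta)=tr\big(a\,u_{t^{-1}}u_t\big)=tr(a)$, i.e. $\int_S(a\circ\phi_t)\,d\tau=\int_S a\,d\tau$ for all $a$, so $\tau$ is invariant and the action is positive.

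Finally, the ``furthermore'' clause is Singer's theorem \cite{singer:1955}. The plan is to pass through the von Neumann algebra of the orbit equivalence relation: freeness identifies each $\LLL^\infty(S_i,\mu_i)\rtimes G_i$ with $W^\ast(\mathcal{R}_i)$, where $\mathcal{R}_i$ is the countable measured equivalence relation cut out by the $G_i$-orbits, the group contributing only a trivial $2$-cocycle in the free case. An orbit equivalence $h$ carries $G_1$-orbits to $G_2$-orbits and satisfies $\mu_1\circ h^{-1}\sim\mu_2$, so it is an isomorphism of measured equivalence relations $\mathcal{R}_1\cong\mathcal{R}_2$, which functorially induces $W^\ast(\mathcal{R}_1)\cong W^\ast(\mathcal{R}_2)$ and hence the desired $\ast$-isomorphism of the two group measure space constructions.

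I expect the main obstacle to be the type-$II_1$ equivalence in (3): the delicate points are verifying that the candidate functional $tr$ is a genuine ultraweakly continuous faithful trace — where the trace identity turns out to be precisely equivalent to invariance of $\tau$ — and, in the converse direction, extracting from the abstract trace an invariant probability measure and checking $\tau\sim\mu$ through faithfulness and normality. The identification $\LLL^\infty\rtimes G\cong W^\ast(\mathcal{R})$ underlying the furthermore clause is technically the deepest input, but it is classical and may be quoted from \cite{singer:1955}.
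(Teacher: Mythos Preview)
The paper does not give its own proof of this theorem: it is quoted as a background result, and at the start of Section~\ref{sec:vNalg} the reader is referred to \cite{bratteli:robinson:1987}, \cite{sunder:1987}, \cite{jones:2009}, \cite{peterson:2013} for proofs of all results in that section. So there is nothing to compare against; your sketch is essentially the standard argument one finds in those references (Fourier coefficients to locate the centre, freeness to force $a_t=0$ for $t\neq e$, ergodicity to force $a_e$ constant, and the invariant probability measure $\tau$ as the source of the trace in the $II_1$ case).

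Two small points worth tightening. First, in (3) you correctly flag that ``$M$ is infinite-dimensional'' needs $G$ infinite; the theorem as stated in the paper is for a general countable $G$, but the applications are all to $\mathbb{Z}^d$, so this is harmless in context---still, you should also rule out type~$I$ explicitly (freeness of an infinite group forces $\tau$ to be non-atomic, hence the factor is not type~$I$). Second, for the ``furthermore'' clause you invoke the identification $\LLL^\infty\rtimes G\cong W^\ast(\mathcal{R})$, which requires freeness of the actions, whereas the paper's statement of the clause does not explicitly assume it. This is a genuine (if minor) mismatch: without freeness the crossed product carries an extra $2$-cocycle and the implication ``orbit equivalence $\Rightarrow$ $W^\ast$-equivalence'' is not automatic. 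In the paper's actual use of this clause (Theorem~\ref{thm:minml:invariant}) the two actions are in fact conjugate, so the issue does not arise downstream, but your write-up should state the freeness hypothesis where you use it.
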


\noindent The above result shows the strong ties between ergodic theory and von Neumann algebras. In this work, we plan to encash a stronger connection given by the following result, which will play a crucial role in establishing the main result of this paper.

\begin{thm} \label{thm:cntrl_n_erg_decomp} Let $\{\phi_t\}_{t \in G}$ be a free nonsingular action on a $\sigma$-finite standard measure space $(S, \mathcal{S}, \mu)$, which has an ergodic decomposition as given in Theorem~\ref{thm:erg_comp}. Then $M:= \LLL^\infty(S, \mu) \rtimes G$ has a central decomposition \eqref{decomp:central} with $M_y:= \LLL^\infty(S_y, \mu_y) \rtimes G$ for each $y \in Y$. 
\end{thm}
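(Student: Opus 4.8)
The plan is to realize $M$ as a direct integral of von Neumann algebras over $(Y, \nu)$ by disintegrating the underlying Hilbert space along the ergodic decomposition, to show that the generators of $M$ are decomposable operators whose fibres generate $\LLL^\infty(S_y, \mu_y) \rtimes G$, and finally to invoke ergodic freeness to conclude that $\nu$-almost every fibre is a factor, so that the resulting direct integral is forced by uniqueness (Theorem~\ref{thm:central_decomp}) to be the central decomposition. First I would set up the Hilbert space disintegration. Property (2) of Theorem~\ref{thm:erg_comp} expresses $\mu = \int_Y \mu_y \, d\nu(y)$, and since the $\mu_y$ are carried by the mutually disjoint fibres $S_y = \Psi^{-1}(\{y\})$ (property (3)), standard disintegration theory for $\sigma$-finite standard measure spaces yields a unitary $\LLL^2(S, \mu) \cong \int_Y^{\oplus} \LLL^2(S_y, \mu_y)\, d\nu(y)$ under which a function $g$ goes to $(g|_{S_y})_{y}$. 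Tensoring with $l^2(G)$ gives $l^2(G) \otimes \LLL^2(S, \mu) \cong \int_Y^{\oplus} \big(l^2(G) \otimes \LLL^2(S_y, \mu_y)\big)\, d\nu(y)$, whose diagonal algebra is exactly $1 \otimes \Psi^{*}\LLL^\infty(Y, \nu)$, where $\Psi^{*}h := h \circ \Psi$.

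Next I would show that every generator of $M$ is decomposable, i.e.\ commutes with the diagonal algebra $1 \otimes \Psi^{*}\LLL^\infty(Y, \nu)$. For a multiplication operator $1 \otimes \mathcal M_a$ this is immediate since $\LLL^\infty(S, \mu)$ is abelian, and its fibre is $1 \otimes \mathcal M_{a|_{S_y}}$. For $u_t = \lambda_t \otimes \pi_t$ the key input is that $\Psi$ is $\{\phi_t\}$-invariant modulo $\mu$ (the defining property of the ergodic decomposition), so $\sigma_t(\Psi^{*}h) = \Psi^{*}h$; combined with the internal crossed product relation \eqref{eqn:crossed_product_int} this gives $u_t (1 \otimes \mathcal M_{\Psi^{*}h}) u_{t^{-1}} = 1 \otimes \mathcal M_{\Psi^{*}h}$, so $u_t$ commutes with the diagonal. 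In particular $1 \otimes \Psi^{*}\LLL^\infty(Y, \nu) \subseteq Z(M)$. Because $\Psi \circ \phi_t = \Psi$, each $\phi_t$ preserves the fibres $S_y$, and $\pi_t$ restricts fibrewise to the Koopman operator $\pi_t^{(y)}$ of the restricted action on $(S_y, \mu_y)$, giving $u_t = \int_Y^{\oplus} (\lambda_t \otimes \pi_t^{(y)})\, d\nu(y)$. Reduction theory over the central subalgebra $1\otimes \Psi^{*}\LLL^\infty(Y,\nu)$ then produces a decomposition $M = \int_Y^{\oplus} M_y\, d\nu(y)$, whose fibre $M_y$ is the von Neumann algebra generated by $\{\lambda_t \otimes \pi_t^{(y)}, \, 1 \otimes \mathcal M_{b}\}$ with $b$ ranging over $\LLL^\infty(S_y, \mu_y)$. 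As $a$ ranges over $\LLL^\infty(S, \mu)$ the restrictions $a|_{S_y}$ exhaust $\LLL^\infty(S_y, \mu_y)$ for $\nu$-a.e.\ $y$, so $M_y = \LLL^\infty(S_y, \mu_y) \rtimes G$, exactly the crossed product of the restricted action.

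Finally I would identify this direct integral as the central decomposition. By property (4) of Theorem~\ref{thm:erg_comp} the restricted action on $(S_y, \mu_y)$ is nonsingular and ergodic for every $y$, and by ergodic freeness it is also free for every $y$; hence Theorem~\ref{thm:factor_ergodic}(1) shows $M_y = \LLL^\infty(S_y, \mu_y) \rtimes G$ is a factor for $\nu$-a.e.\ $y$. A direct integral of factors has its centre equal to the diagonal algebra, so $M = \int_Y^{\oplus} M_y \, d\nu(y)$ is a decomposition into factors, and the almost-sure uniqueness in Theorem~\ref{thm:central_decomp} identifies it with the central decomposition \eqref{decomp:central}, with $M_y = \LLL^\infty(S_y, \mu_y) \rtimes G$ as claimed.

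I expect the main obstacle to be the fibrewise compatibility of the Radon--Nikodym cocycle with the disintegration, namely that $\frac{d\mu \circ \phi_t}{d\mu}\big|_{S_y} = \frac{d\mu_y \circ \phi_t}{d\mu_y}$ for $\nu$-a.e.\ $y$, which is what actually makes $\pi_t$ decompose as $\int_Y^{\oplus} \pi_t^{(y)}\, d\nu(y)$ with each $\pi_t^{(y)}$ the genuine Koopman operator on $(S_y, \mu_y)$. This requires combining the invariance of $\Psi$ with the essential uniqueness of the disintegration $\{\mu_y\}$ and a careful treatment of the fact that these identities hold only modulo null sets and only $\nu$-almost everywhere in $y$; the countability of $G$ is what lets one discard a single $\mu$-null set simultaneously for all $t \in G$.
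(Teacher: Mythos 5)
Your proof is correct, and here the comparison is of an unusual kind: the paper does not actually prove Theorem~\ref{thm:cntrl_n_erg_decomp} at all. It is stated in the preliminary section on von Neumann algebras, introduced only with the remark that it ``is not at all unexpected in light of Theorem~\ref{thm:factor_ergodic}'', and that section opens by deferring all proofs to the cited literature (\cite{bratteli:robinson:1987}, \cite{sunder:1987}, \cite{jones:2009}, \cite{peterson:2013}, with \cite{knudby:2011} for direct integrals). So your write-up supplies precisely the argument the paper leaves implicit, and it is the standard reduction-theory argument: disintegrate $\LLL^2(S,\mu)$ along $\Psi$, observe that the generators $u_t$ and $1\otimes\mathcal{M}_a$ are decomposable because $\Psi\circ\phi_t=\Psi$ modulo $\mu$ (so the diagonal algebra $1\otimes\Psi^{*}\LLL^\infty(Y,\nu)$ lies in $Z(M)$), identify the fibres as the crossed products of the restricted actions, and then use ergodic freeness together with Theorem~\ref{thm:factor_ergodic}(1) and the uniqueness in Theorem~\ref{thm:central_decomp} to conclude that this direct integral is the central decomposition. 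You also correctly isolate the genuine technical points: the fibrewise identification of the Radon--Nikodym cocycle, $\frac{d\mu\circ\phi_t}{d\mu}\big|_{S_y}=\frac{d\mu_y\circ\phi_t}{d\mu_y}$ for $\nu$-a.e.\ $y$, which is what makes $\pi_t$ decompose into the genuine Koopman operators of the restricted actions, and the role of countability of $G$ in discarding null sets uniformly in $t$. Two small things worth making explicit in a full write-up: (i) the identification of the fibre algebras (that $M_y$ is generated by the fibres of the generators of $M$) uses that $M$ is countably generated over the diagonal, which holds because $G$ is countable and $(S,\mu)$ is standard; and (ii) what forces the decomposition to be the central one is the equality $Z(M)=1\otimes\Psi^{*}\LLL^\infty(Y,\nu)$, not mere containment, and this follows from $Z\bigl(\int_Y^{\oplus} M_y\,d\nu\bigr)=\int_Y^{\oplus} Z(M_y)\,d\nu$ once a.e.\ fibre is a factor --- a step you do note but which deserves the explicit citation.
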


We have already defined conjugacy and orbit equivalence for nonsingular group actions in Section~\ref{sec:two_equiv_reln}. In the next subsection, we will define another equivalence relation called $W^\ast$-equivalence, which has led to the famous notion and the rich theory of $W^\ast$-rigidity as described below.

\subsection{$W^\ast$-rigidity}  \label{sec:Wstar_super}

We start by defining the following equivalence relation (on the space of all nonsingular group actions), which, in spite of being the weakest of the three, will be extremely useful in this work. 

\begin{definition}\label{defn:Wstar_equiv} Two nonsingular group actions (not necessarily of the same group) are called $W^\ast$-equivalent if the corresponding group measure space constructions are isomorphic as von Neumann algebras. 
\end{definition}
\noindent As we have already observed, conjugacy trivially implies orbit equivalence. Theorem~\ref{thm:factor_ergodic} yields that orbit equivalence implies $W^\ast$-equivalence, which was first observed by \cite{singer:1955} for \emph{probability measure-preserving} (p.m.p.) group actions.

Summarizing, we get 
\begin{equation}
\mbox{conjugacy} \;\; \Longrightarrow \;\; \mbox{orbit equivalence} \;\; \Longrightarrow \;\; W^\ast\mbox{-equivalence}.  \label{impli_conj_oe_wstar}
\end{equation}
In general, neither of the above implications can be reversed. A rigidity phenomenon corresponds to general conditions on the groups and/or actions which ensures that the reverse implication(s) hold. More precisely, if under some conditions, $W^\ast$-equivalence implies conjugacy (or at least orbit equivalence), then it will be an example of a rigidity property. Informally speaking, rigidity means that the group measure space construction ``remembers'' the group and its action very well. The strongest form of this is the notion of $W^\ast$-superrigidity (a term coined by Sorin Popa; see, e.g., \cite{ioana:2018} and the references therein) in the context of p.m.p.\ group actions. 

\begin{definition} A p.m.p.\ group action $\{\phi_t\}_{t \in G}$ is called $W^\ast$-superrigid if any free ergodic p.m.p.\ action $W^\ast$-equivalent to $\{\phi_t\}_{t \in G}$ must be conjugate to it.
\end{definition}

Essentially, $W^\ast$-superrigidity of an action means that the conjugacy class of the action can be fully reconstructed from the isomorphism class of its crossed product von Neumann algebra. Roughly speaking, the group measure space construction ``fully remembers'' the group as well as its action. Therefore, this property is incredibly useful in the classification theory and has become a prominent topic of recent research; see \cite{popa:2006}, \cite{peterson:2010}, \cite{popa:vaes:2010, popa:vaes:2014}, \cite{ioana:2011}, etc. Just to give an example, it was shown by \cite{ioana:2011} that the Bernoulli action of any ICC group having Kazhdan's property (T) is $W^\ast$-superrigid. There has been progress towards $W^\ast$-superrigidity for nonsingular (but not necessarily p.m.p.) actions as well; see, for instance, Proposition~D of \cite{vaes:2014}. For a detailed survey of these results, we refer the readers to \cite{ioana:2018}. 

As mentioned in Section~\ref{sec:intro}, to each stationary \sas random field indexed by a countable group, we attach a novel invariant; we term this the \textit{minimal group measure space construction}. Of course, motivated by the notion of $W^\ast$-superrigidity, the natural question to ask would be the following:
\begin{qn} \label{qn_on_superrigidity}
Does the minimal group measure space construction fully remember the stationary \sas random field in some suitable sense? 
\end{qn}
\noindent The complete answer to the above question is still unknown and perhaps extremely difficult. Finding conditions on one of the random fields so that we can get a positive answer to Question~\ref{qn_on_superrigidity} should be thought of as a $W^\ast$-superrigidity question in this setup. 

While $W^\ast$-superrigidity signifies complete memory of the group action, it is quite possible in many situations that only certain features of the action will be remembered. This phenomenon is typically referred to as $W^\ast$-rigidity. We will establish such a result for stationary \sas random fields. More specifically, we will show, with the help of Theorem~\ref{thm:cntrl_n_erg_decomp}, that ergodicity (as well as complete non-ergodicity) will be remembered provided the indexing group of the field is $\mathbb{Z}^d$ and its actions are free. We will actually introduce two types of $W^\ast$-rigidity and show that ergodicity and absolute non-ergodicity are indeed rigid in both senses under our conditions. One must contrast this with the complete lack of rigidity for free ergodic p.m.p.\ actions of $\mathbb{Z}^d$ (more generally, countably infinite amenable groups; see \cite{connes:1976}). Of course, we do allow actions that are not p.m.p.\ or else by Theorem~\ref{thm:erg_charac}, no $\mathbb{Z}^d$-indexed stationary \sas random field would be ergodic.

\section{Stationary S$\alpha$S Random Fields} \label{sec:sas}

In this section, we present the background on symmetric $\alpha$-stable random fields (indexed by a countable group $G$) and their integral representations. We will also introduce the notion of the minimal representation, which will be very useful in the next section. Finally, we specialize to the stationary case and discuss Rosi\'{n}ski representations. For a survey of these results in the $G=\mathbb{Z}$ case, see, for instance, \cite{roy:2017}. 

\subsection{Integral Representation}

Let $\mathbf{X}=\{X_t\}_{t \in G}$ be an \sas ($0< \alpha <2$) (not necessarily stationary) random field indexed by $G$ as defined in Section~\ref{sec:intro}. Any such random field has an \emph{integral representation} (also called \emph{spectral representation}) of the type
   \begin{equation}\label{integrep}
   X_t \overset{d}{=} \int_S f_t(s)M(ds), \mbox{ \ \  } t \in G,
   \end{equation}
where $M$ is an \sas random measure on some standard Borel space $S$ with a $\sigma$-finite control measure $\mu$, and $\{f_t: t \in G\} \subset \mathcal{L}^\alpha (S, \mu)$ is a family of real-valued functions. See, for instance, Theorem 13.1.2 of \cite{samorodnitsky:taqqu:1994}. This simply means that each linear combination $\sum_{i=1}^k c_i X_{t_i}$  follows an \sas distribution with scale parameter $\|\sum_{i=1}^k c_i f_{t_i}\|_\alpha$. The collection $\{f_t\}_{t \in G} \subset \mathcal{L}^\alpha (S, \mu)$ is called a spectral representation or an integral representation of $\{X_t\}_{t \in G}$. We shall assume, without loss of generality, that the full support condition 
\[
\bigcup_{t \in G} \mbox{Support}(f_t) = S
\]
holds for all integral representations $\{f_t\}_{t \in G}$ of $\{X_t\}_{t \in G}$. Note that given any $\sigma$-finite standard measure space $(S, \mathcal{S}, \mu)$, a family of real-valued functions $\{f_t\}_{t \in G} \subset \mathcal{L}^\alpha(S, \mu)$ and an S$\alpha$S random measure $M$ on $S$ with control measure $\mu$, one can construct an S$\alpha$S random field using \eqref{integrep}.

\subsection{Minimal Representation}

The next notion (introduced in \cite{hardin:1982b}) is slightly technical albeit very useful. We shall first give the formal definition and then have a discussion that will help us understand its meaning.

\begin{definition} \label{defn:minml:repn} An integral representation $\{f_t\}_{t \in G} \subset \mathcal{L}^\alpha(S, \mu)$ of an S$\alpha$S random field is called a minimal representation if for all $B \in \mathcal{S}$, there exists $A \in \sigma\big\{f_t/f_{u}: t, u \in G\big\}$ such that $\mu(A \Delta B)=0$.
\end{definition}

\noindent The ratio $f_t(s)/f_{u}(s)$ is defined to be $\infty$ when $f_t(s) \geq 0$, $f_{u}(s)=0$ and $-\infty$ when $f_t(s)<0$, $f_{u}(s)=0$. In particular, the $\sigma$-algebra $\sigma\big\{f_t/f_{u}: t, u \in G\big\}$ is generated by a bunch of extended real-valued functions. It was shown by \cite{hardin:1981, hardin:1982b} that every S$\alpha$S random field has a minimal representation even though it is never unique.

The following discussion provides better insight into the notion of minimality of integral representations. Let $\{f^\ast_t\}_{t \in G} \subset \LLL^\alpha(S^\ast, \mu^\ast)$ be a minimal representation of an S$\alpha$S random field $\{X_t\}_{t \in G}$ and $\{f_t\}_{t \in G} \subset \LLL^\alpha(S, \mu)$ be any integral representation of  $\{X_t\}_{t \in G}$. Then there exist measurable functions $\Phi: S \to S^\ast$ and $\eta: S \to \mathbb{R}\setminus \{0\}$ such that
\begin{equation}
\mu^\ast(A)=\int_{\Phi^{-1}(A)}|\eta|^\alpha d\mu, \;\,\;\;\; A \subseteq {S}^\ast, \label{reln_between_mu_h_muast}
\end{equation}
and for each $t \in G$,
\begin{equation}
f_t(s)=\eta(s) f_t^\ast(\Phi(s)) \;\mbox{ for $\mu$-almost all } s \in S. \label{reln_between_f_and_fast}
\end{equation}

In other words, minimal representations are minimal in the sense that any integral representation can be expressed in terms of them. If further $\{f_t\}_{t \in G}$ above is also a minimal representation, then $\Phi$ and $\eta$ are unique modulo $\mu$, $\Phi$ is one-to-one and onto, $\mu^\ast \circ \Phi \sim \mu$ and
\begin{equation}
|\eta|=\left(\frac{d (\mu^\ast \circ \Phi)}{d\mu}\right)^{1/\alpha}\;\;\mbox{ $\mu$-almost surely.}\label{reln_between_mu_h_muast_f_minimal}
\end{equation}
The proofs of these observations use \emph{analytic rigidity} (i.e., dearth of isometries) of $\LLL^\alpha$-spaces for $\alpha \in (0,2)$; see \cite{hardin:1981, hardin:1982b}. See also \cite{Rosinski:1994, rosinski:1995}.

\subsection{The Stationary Case: Rosi\'nski Representation}

Now assume that $\{X_t\}_{t \in G}$ is (left) stationary as described in Section~\ref{sec:intro}. In this case, we would need one more notion, namely, that of a cocycle defined below.

\begin{definition}
	Suppose $\{\phi_t\}_{t \in G}$ is a nonsingular action on a standard measure space $(S, \mathcal{S}, \mu)$. Then a collection of measurable maps $\big\{c_t: S \to \{+1, -1\}\big\}_{t \in G}$ is called a $\pm 1$-valued cocycle for $\{\phi_t\}$ if 
     for all $t_1, t_2 \in G$, 
     $$c_{t_1 t_2}(s) = c_{t_1} (s)c_{t_2}(\phi_{t_1}(s))$$ for $\mu$-almost all $s \in S$.
\end{definition}
\noindent Using \eqref{reln_between_f_and_fast}, one can show (see \cite{Rosinski:1994}, \cite{rosinski:1995}, \cite{rosinski:2000}, \cite{sarkar:roy:2018}) that any minimal representation of $\{X_t\}_{t \in G}$ has the following special form:
\begin{equation}\label{eq1}
 f_t(s)=c_t(s)\left(\frac{d\mu\circ\phi_t}{d\mu}(s)\right)^{1/\alpha} f\circ \phi_t(s), \mbox{\ \ } t \in G,
\end{equation}
where $f\in \mathcal{L}^{\alpha}(S,\mu)$ is a real-valued function, $\{\phi_t\}_{t \in G}$ is a nonsingular $G$-action on $(S, \mu)$, and $\{c_t\}_{t\in G}$ is a $\pm 1$-valued cocycle for $\{\phi_t\}_{t \in G}$. The relation $\phi_{uv} = \phi_v \circ \phi_u$ comes from the left-stationarity of $\{X_t\}$. It will be reversed if we consider a right-stationary \sas field.

Conversely, given any $\sigma$-finite standard measure space $(S, \mathcal{S}, \mu)$, a real-valued function $f \in\mathcal{L}^\alpha(S, \mu)$, a nonsingular $G$-action $\{\phi_t\}_{t \in G}$ on $(S, \mu)$, a $\pm 1$-valued cocycle $\{c_t\}$ for $\{\phi_t\}$, and an S$\alpha$S random measure $M$ on $S$ with control measure $\mu$, one can construct a stationary S$\alpha$S random field using \eqref{integrep} and \eqref{eq1}. In this case, we say that the stationary \sas random field $\{X_t\}_{t\in G}$ is generated by the nonsingular $G$-action $\{\phi_t\}$. Following \cite{roy:2017}, we shall call any integral representation (not necessarily minimal) of the form \eqref{eq1} a \emph{Rosi{\'n}ski representation} of $\{X_t\}_{t \in G}$. We introduce the following terminology for the ease of presentation of Theorem~\ref{thm:minml_repn:conjugacy} below. 

\begin{definition}\label{defn:triplets}
We will call $\left(f, \{\phi_t\}, \{c_t\}\right)$ a Rosi{\'n}ski triplet (corresponding to the representation \eqref{eq1}) of $\{X_t\}$ on $(S, \mu)$. If further  \eqref{eq1} is a minimal representation, then $\left(f, \{\phi_t\}, \{c_t\}\right)$ will be called a minimal triplet (of $\{X_t\}$ on $(S, \mu)$), or we will simply say that the Rosi{\'n}ski triplet $\left(f, \{\phi_t\}, \{c_t\}\right)$ is minimal. 
\end{definition}

Observe that using our language, \cite{rosinski:1995} actually established that any minimal representation is a Rosi{\'n}ski representation. However, the converse may not hold; the integral representation considered in Remark~4.3 of \cite{roy:2010a} will work as a counter-example (this was provided to the author by Jan  Rosi{\'n}ski).

\section{Linking Stable Fields with von Neumann Algebras} \label{sec:main_results}

As will be seen in this section, the group measure space construction corresponding to all minimal representations (of a fixed stationary S$\alpha$S field) will be isomorphic as von Neumann algebras making this an invariant of the random field. This result opens up a close connection between probabilistic properties of stationary S$\alpha$S random fields and von Neumann algebraic aspects of the group measure space construction corresponding to some (equivalently, any) minimal representation. 

\subsection{Minimal Representations, Conjugacy and the Crossed Product Invariant}

Our first result is an extension of two theorems in \cite{rosinski:1995} put together. It states, among other things, that two nonsingular actions arising in two minimal representations (of a fixed stationary \sas random field) are conjugate. In order to present the precise statement of this result in its full strength, we need to introduce one more equivalnce relation as follows.

\begin{definition} \label{defn:equiv_of_pairs}
	Suppose $\{\phi^{(i)}_t\}_{t \in G}$ is a nonsingular action of a countable group $G$ on a $\sigma$-finite standard Borel space $(S_i, \mathcal{S}_i, \mu_i)$ and $\{c^{(i)}_t\}_{t \in G}$ is a $\pm 1$-valued cocycle for $\{\phi^{(i)}_t\}$ for each $i=1, \, 2$. We write $\left(\{\phi^{(1)}_t\}, \{c^{(1)}_t\}\right) \approx \left(\{\phi^{(2)}_t\}, \{c^{(2)}_t\}\right)$ (and say that the pairs are equivalent) if
	\begin{enumerate}
	\item the $G$-actions $\{\phi^{(1)}_t\}$ and $\{\phi^{(2)}_t\}$ are conjugate via conjugacy map $h:S_1 \to S_2$ as in Definition~\ref{defn:conj_oe} (in particular, \eqref{eqn_conjugacy} holds with $\alpha$ being the identity map on $G$), and
	\item the cocycle $\{c^{(1)}_t \circ h^{-1}\}_{t \in G}$ is cohomologous to $\{c^{(2)}_t\}_{t \in G}$\, , i.e., there exists a measurable map $b:S_2 \to \{+1, -1\}$ such that for each $t \in G$,
	\[
	c^{(1)}_t \circ h^{-1}(s) = c^{(2)}_t(s) \, \frac{b \circ \phi^{(2)}_t(s)}{b(s)}
	\]
	for $\mu_2$-almost all $s \in S_2$. 
    \end{enumerate}
\end{definition}

It is easy to verify that ``$\approx$'' defines an equivalence relation (for pairs of the form (action, cocycle) as above), which holds trivially when the actions are conjugate via the map $h$ and $c^{(1)}_t \circ h^{-1}=c^{(2)}_t$ for all $t \in G$. The following result is a combined generalization of Proposition~3.3 and Theorem~3.6 of \cite{rosinski:1995}, who considered the case $G=\mathbb{Z}$. 

\begin{thm} \label{thm:minml_repn:conjugacy}
	Let $G$ be a countable group and $\{X_t\}_{t \in G}$ be a stationary \sas random field. Then the following results hold.\\
	\noindent (a) Suppose  $\left(f^{(1)}, \{\phi^{(1)}_t\}, \{c^{(1)}_t\}\right)$ is a Rosi\'{n}ski triplet of $\{X_t\}$ on $(S_1, \mu_1)$ as in Definition~\ref{defn:triplets}. Borrowing the notation from  Definition~\ref{defn:equiv_of_pairs} above, we assume that 
	\begin{equation}
	\left(\{\phi^{(1)}_t\}, \{c^{(1)}_t\}\right) \approx \left(\{\phi^{(2)}_t\}, \{c^{(2)}_t\}\right) \label{equiv_from_minl}
	\end{equation}
and set 
	$
	f^{(2)}(s) = b(s) \left(\frac{d(\mu_1 \circ h^{-1})}{d\mu_2}(s)\right)^{1/\alpha} f^{(1)}\circ h^{-1}(s)
	$
	for all $s \in S_2$. Then $\left(f^{(2)}, \{\phi^{(2)}_t\}, \{c^{(2)}_t\}\right)$ is also a Rosi\'{n}ski triplet of $\{X_t\}$ on $(S_2, \mu_2)$. Moreover, if $\left(f^{(1)}, \{\phi^{(1)}_t\}, \{c^{(1)}_t\}\right)$ is minimal, then so is $\left(f^{(2)}, \{\phi^{(2)}_t\}, \{c^{(2)}_t\}\right)$. \\
	\noindent (b) Conversely, if $\left(f^{(i)}, \{\phi^{(i)}_t\}, \{c^{(i)}_t\}\right)$ is a minimal triplet of $\{X_t\}$ on $(S_i, \mu_i)$ for $i = 1,\, 2$, then \eqref{equiv_from_minl} holds. In particular, two group actions arising in two minimal representations of $\{X_t\}$ are conjugate. 
\end{thm}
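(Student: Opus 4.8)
The plan is to treat the two implications separately: part (a) is a direct verification that the prescribed triplet generates the same field, while part (b) is the substantive direction and rests on the rigidity of minimal representations recorded in \eqref{reln_between_f_and_fast} and \eqref{reln_between_mu_h_muast_f_minimal}. Throughout I will pass freely between a Rosi\'{n}ski triplet and the functions $f^{(i)}_t$ it produces via \eqref{eq1}.

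For part (a), I would first write out $f^{(2)}_t$ through \eqref{eq1} for the triplet $\left(f^{(2)}, \{\phi^{(2)}_t\}, \{c^{(2)}_t\}\right)$ and establish the single transport identity
\[
f^{(2)}_t(s) = b(s)\left(\frac{d(\mu_1\circ h^{-1})}{d\mu_2}(s)\right)^{1/\alpha} f^{(1)}_t\circ h^{-1}(s)
\]
for $\mu_2$-a.a.\ $s$ and every $t$. This is obtained by substituting the definition of $f^{(2)}$, pushing $h^{-1}$ through the action via the conjugacy relation \eqref{eqn_conjugacy}, converting $c^{(2)}_t$ into $c^{(1)}_t\circ h^{-1}$ with the coboundary identity of Definition~\ref{defn:equiv_of_pairs}, and collapsing the Radon--Nikodym factors using the chain rule together with the cocycle identity for $d\mu\circ\phi_{uv}/d\mu$. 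Since $|b|\equiv1$, a change of variables under the bijection $h$ then gives $\|\sum_i a_i f^{(1)}_{t_i}\|_{\alpha,\mu_1} = \|\sum_i a_i f^{(2)}_{t_i}\|_{\alpha,\mu_2}$ for all finite real combinations, which is exactly the equality of scale parameters characterizing identical finite-dimensional distributions of \sas fields; hence the second triplet is a Rosi\'{n}ski triplet of $\{X_t\}$. Minimality transfers because the scalar prefactor $b(s)\big(d(\mu_1\circ h^{-1})/d\mu_2\big)^{1/\alpha}$ is common to all $t$ and cancels in every ratio $f^{(2)}_t/f^{(2)}_u$, so that $\sigma\{f^{(2)}_t/f^{(2)}_u\}$ is carried onto $\sigma\{f^{(1)}_t/f^{(1)}_u\}$ by the bimeasurable bijection $h$.

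For part (b), given two minimal triplets I would apply the rigidity theorem (\eqref{reln_between_f_and_fast}, \eqref{reln_between_mu_h_muast_f_minimal}) with the second representation playing the role of the minimal ``starred'' one, producing a bimeasurable bijection $h:=\Phi:S_1\to S_2$ with $\mu_1\circ h^{-1}\sim\mu_2$ and a nowhere-vanishing $\eta$ satisfying $f^{(1)}_t = \eta\cdot(f^{(2)}_t\circ h)$ and $|\eta|=\big(d(\mu_2\circ h)/d\mu_1\big)^{1/\alpha}$. The key structural input is the reindexing identity extracted from \eqref{eq1} and the cocycle relations: for each fixed $s_0\in G$,
\[
f^{(i)}_{s_0 t}(s) = c^{(i)}_{s_0}(s)\left(\frac{d\mu_i\circ\phi^{(i)}_{s_0}}{d\mu_i}(s)\right)^{1/\alpha} f^{(i)}_t\big(\phi^{(i)}_{s_0}(s)\big),\qquad i=1,2,
\]
where I must use $\phi_{uv}=\phi_v\circ\phi_u$ and $c_{s_0 t}(s)=c_{s_0}(s)\,c_t(\phi_{s_0}(s))$ in the correct order. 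Feeding these into the rigidity relation at the indices $t$ and $s_0 t$ yields, after cancelling the $t$-independent scalars, that for all $t$ the families $t\mapsto f^{(2)}_t\big(h(\phi^{(1)}_{s_0}(s))\big)$ and $t\mapsto f^{(2)}_t\big(\phi^{(2)}_{s_0}(h(s))\big)$ are proportional with a constant not depending on $t$. Hence the points $h(\phi^{(1)}_{s_0}(s))$ and $\phi^{(2)}_{s_0}(h(s))$ carry identical ratio-coordinates $\{f^{(2)}_t/f^{(2)}_u\}$, and minimality of $\{f^{(2)}_t\}$ forces them to coincide for $\mu_1$-a.a.\ $s$; this is exactly the conjugacy $h\circ\phi^{(1)}_{s_0}=\phi^{(2)}_{s_0}\circ h$ of \eqref{eqn_conjugacy}. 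Returning to the simplified relation and separating modulus from sign, the modulus equation is an automatic consistency of Radon--Nikodym derivatives under the conjugacy, while comparing signs gives $c^{(1)}_{s_0}\circ h^{-1}(s') = c^{(2)}_{s_0}(s')\,\big(b\circ\phi^{(2)}_{s_0}(s')\big)/b(s')$ with $b:=\operatorname{sgn}(\eta)\circ h^{-1}$; since $s_0$ was arbitrary, this is the cohomology clause of Definition~\ref{defn:equiv_of_pairs}, so \eqref{equiv_from_minl} holds and the two actions are conjugate.

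The main obstacle is the passage from ``equal ratio-coordinates'' to ``equal points'' that delivers the conjugacy: it uses that a minimal representation separates points of the standard space $S_2$ modulo null sets (so that the ratio map $x\mapsto(f^{(2)}_t(x)/f^{(2)}_u(x))_{t,u}$ is essentially injective), together with the full-support convention so that cancelling $f^{(2)}_t$ is legitimate on the relevant supports. The second point demanding care, and the only genuinely new feature relative to \cite{rosinski:1995}, is the bookkeeping of the group operation: every use of $\phi_{uv}=\phi_v\circ\phi_u$ and of the cocycle identities must respect the order of $s_0$ and $t$, since $G$ need not be abelian. Once these orderings are tracked consistently, the commutative argument of \cite{rosinski:1995} goes through verbatim.
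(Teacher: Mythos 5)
Your proposal is correct and takes essentially the same route as the paper, which handles (a) by the same transport-identity/change-of-variables verification (mimicking Proposition~3.3 of \cite{rosinski:1995}) and (b) by expanding $f^{(2)}_{\tau t}$ in two different ways and invoking minimality, with the same care about the order of composition in the non-abelian setting. The only cosmetic difference is that where the paper cites the uniqueness of the pair $(\Phi,\eta)$ in \eqref{reln_between_f_and_fast} as a black box, you rederive that uniqueness inline via the essential injectivity of the ratio map $x\mapsto\big(f^{(2)}_t(x)/f^{(2)}_u(x)\big)_{t,u}$, which is exactly the mechanism underlying it.
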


\begin{proof} Note that (a) follows by mimicking the proof of Proposition~3.3 of \cite{rosinski:1995}. On the other hand, to establish (b), we have to immitate the proof of Theorem~3.6 in \cite{rosinski:1995} with a bit of care keeping in mind that $G$ is possibly non-abelian. More precisely, we have to expand $f^{(2)}_{\tau t}$ in two different ways (for each $\tau, t \in G$) and then invoke the uniqueness of $\Phi$ and $\eta$ in \eqref{reln_between_f_and_fast} (because of minimality) to carry out the argument in a similar fashion.
\end{proof}

The above result motivates the following notions, one of which will become an important invariant for a stationary \sas random field. 

\begin{definition} Suppose $\{X_t\}_{t \in G}$ is a stationary \sas random field and $\{\phi_t\}_{t \in G}$ is a nonsingular action (on a standard measure space $(S, \mu)$) arising in a Rosi\'{n}ski (resp. minimal) representation. Then the corresponding group measure space construction $\LLL^\infty(S, \mu) \rtimes G$ is called a Rosi\'{n}ski (resp. minimal) group measure space construction of $\{X_t\}_{t \in G}$. 
\end{definition}

The following result is an immediate consequence of Theorem~\ref{thm:minml_repn:conjugacy} above and should be regarded as one of the main results of this paper that builds a link between stationary \sas random fields and von Neumann algebras. 

\begin{thm} \label{thm:minml:invariant}
Let $\{X_t\}_{t \in G}$ be a stationary \sas random field indexed by a countable group $G$. Suppose, for each  $i=1,\,2$,  $\{\phi^{(i)}_t\}$ is a nonsingular $G$-action on a standard measure space $(S_i, \mu_i)$ arising in a minimal representation of $\{X_t\}$. Then 
\[
\LLL^\infty(S_1, \mu_1) \rtimes G \cong \LLL^\infty(S_2, \mu_2) \rtimes G
\]
as von Neumann algebras, i.e., $\{\phi^{(1)}_t\}$ and $\{\phi^{(2)}_t\}$ are $W^\ast$-equivalent. In other words, a stationary \sas random field (indexed by a countable group) has unique (up to von Neumann algebra isomorphism) minimal group measure space construction. 
\end{thm}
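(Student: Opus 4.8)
The plan is to reduce the statement to the chain of implications \eqref{impli_conj_oe_wstar}, namely that conjugacy implies orbit equivalence which in turn implies $W^\ast$-equivalence, after first extracting conjugacy of the two actions from minimality. Concretely, the argument proceeds in three short steps, and essentially all of its substance has already been packaged into the results proved earlier.

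First, I would invoke Theorem~\ref{thm:minml_repn:conjugacy}(b). Since both $\{\phi^{(1)}_t\}$ and $\{\phi^{(2)}_t\}$ arise as the group actions in minimal Rosi\'{n}ski triplets $\left(f^{(i)}, \{\phi^{(i)}_t\}, \{c^{(i)}_t\}\right)$ of the \emph{same} field $\{X_t\}$, part (b) guarantees $\left(\{\phi^{(1)}_t\}, \{c^{(1)}_t\}\right) \approx \left(\{\phi^{(2)}_t\}, \{c^{(2)}_t\}\right)$ in the sense of Definition~\ref{defn:equiv_of_pairs}. In particular, clause (1) of that definition produces a measurable bijection $h : S_1 \to S_2$ with measurable inverse, satisfying $\mu_1 \circ h^{-1} \sim \mu_2$ and the conjugacy relation \eqref{eqn_conjugacy} with $\alpha$ the identity isomorphism on $G$. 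Thus $\{\phi^{(1)}_t\}$ and $\{\phi^{(2)}_t\}$ are conjugate nonsingular $G$-actions; the cocycle-cohomology condition in clause (2) is irrelevant for the present statement and can be discarded.

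Second, I would pass from conjugacy to orbit equivalence. As already noted in the excerpt, \eqref{eqn_orbit_equiv} is an immediate consequence of \eqref{eqn_conjugacy}: applying $h$ to an orbit $\{\phi^{(1)}_t(s_1): t \in G\}$ and using $h \circ \phi^{(1)}_t = \phi^{(2)}_t \circ h$ carries it onto $\{\phi^{(2)}_t(h(s_1)): t \in G\}$, so the two actions are orbit equivalent. Third, I would apply the final assertion of Theorem~\ref{thm:factor_ergodic}, which says that orbit-equivalent nonsingular actions have isomorphic group measure space constructions. This yields $\LLL^\infty(S_1, \mu_1) \rtimes G \cong \LLL^\infty(S_2, \mu_2) \rtimes G$ as von Neumann algebras, i.e.\ $W^\ast$-equivalence in the sense of Definition~\ref{defn:Wstar_equiv}, and the argument is symmetric in the two representations, giving the claimed uniqueness up to isomorphism.

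I expect no genuine obstacle at this stage: all the analytic difficulty — the rigidity (dearth of isometries) of $\LLL^\alpha$-spaces for $\alpha \in (0,2)$, and the delicate double expansion of $f^{(2)}_{\tau t}$ for possibly non-abelian $G$ — has already been absorbed into the proof of Theorem~\ref{thm:minml_repn:conjugacy}. The only point deserving mild attention is bookkeeping: one should use throughout the very same conjugating map $h$ delivered by Theorem~\ref{thm:minml_repn:conjugacy}(b), so that the measure equivalence $\mu_1 \circ h^{-1} \sim \mu_2$ demanded in the definitions of orbit equivalence and of $W^\ast$-equivalence is literally the one furnished by minimality, rather than merely some abstractly existing equivalence.
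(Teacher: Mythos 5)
Your proposal is correct and follows essentially the same route as the paper's own proof: extract conjugacy of the two actions from Theorem~\ref{thm:minml_repn:conjugacy}, pass to orbit equivalence via \eqref{eqn_conjugacy} $\Rightarrow$ \eqref{eqn_orbit_equiv}, and conclude $W^\ast$-equivalence from the fact that orbit equivalence implies isomorphism of the group measure space constructions (the paper cites \cite{singer:1955} where you cite the last assertion of Theorem~\ref{thm:factor_ergodic}, but these are the same fact). Your closing remark about using the single conjugating map $h$ throughout is a sound bookkeeping point, and discarding the cocycle-cohomology clause is exactly what the paper does as well.
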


\begin{proof} By Theorem~\ref{thm:minml_repn:conjugacy}, $\{\phi^{(1)}_t\}$ and $\{\phi^{(2)}_t\}$ are conjugate $G$-actions. This means that there exists a bijection $h: S_1 \to S_2$ such that both $h$ and $h^{-1}$ are measurable, $\mu_1 \circ h^{-1} \sim \mu_2$, and for all $t \in G$, \eqref{eqn_conjugacy} holds. In particular, for all $t \in G$, \eqref{eqn_orbit_equiv} holds making $\{\phi^{(1)}_t\}_{t \in G}$ and $\{\phi^{(2)}_t\}_{t \in G}$ orbit equivalent. Since orbit equivalence implies $W^\ast$-equivalence (see, for instance, \cite{singer:1955}), it follows that as von Neumann algebras,
\[
\LLL^\infty(S_1, \mu_1) \rtimes G \cong \LLL^\infty(S_2, \mu_2) \rtimes G
\]
completing the proof. 	
\end{proof}

It transpires from Theorem~\ref{thm:minml:invariant} that \emph{the} minimal group measure space construction is an invariant for any stationary \sas random field indexed by a countable group. Of course, this invariant completely forgets the first and the last components of a minimal triplet  $\left(f, \{\phi_t\}, \{c_t\}\right)$ remembering only the middle one, namely, the nonsingular action $\{\phi_t\}$. However, it is clear from the references given in Section~\ref{sec:intro} that various probabilistic facets of stationary \sas random fields are closely tied up with ergodic theoretic properties of $\{\phi_t\}$ with $f$ and $\{c_t\}$ containing only little data on $\{X_t\}$. In light of this, the minimal group measure space construction, which encodes the ergodic theoretic properties of the action up to orbit equivalence, becomes a powerful invariant that carries a lot of information about diverse stochastic features of the random field. We will see manifestation of this heuristic in Theorems~\ref{thm:erg_charac}, \ref{thm:erg_part} and \ref{thm:Wstar_rigidity} and expect to discover more in this direction in future (see Section~\ref{sec:conj}). 

\subsection{Weak Mixing, Ergodicity and Complete Non-ergodicity} \label{sec:erg_wm_nonerg}

Recall that Rosi\'{n}ski representations do not enjoy the strong uniqueness property as the minimal ones. Therefore, two nonsingular actions arising in two different Rosi\'{n}ski representations (of the same stationary \sas field) may not be $W^\ast$-equivalent. However we do hope, in view of  \eqref{reln_between_f_and_fast}, that some operator algebraic properties will be preserved under further conditions on the group and/or the actions (see, e.g., Corollary~\ref{cor:rosinski_invariant}). With this goal in mind and with the intention of obtaining finer results, we specialize in the case when $G=\mathbb{Z}^d$ and the action $\{\phi_t\}_{t \in \mathbb{Z}^d}$ is free. 

The assumption of the underlying action being free is a technical one - it is important because it allows us to invoke  Theorem~\ref{thm:cntrl_n_erg_decomp} in the proof of Theorem~\ref{thm:erg_charac}, which is one of our main results. Keeping this in mind, we abuse the terminology a bit and introduce the following notion.

\begin{definition}
A Rosi\'{n}ski group measure space construction (or the corresponding Rosi\'{n}ski representation) is called free when the underlying nonsingular action is so. 
\end{definition}

\noindent By virtue of Theorem~\ref{thm:minml_repn:conjugacy} above, if the nonsingular action arising in a minimal representation of a stationary \sas random field is free then so is the action arising in another minimal representation of that field. In this situation, we will say that the minimal group measure space construction of the stationary \sas random field is free. This gives rise to the following definition with a further abuse of termonology. 
 
\begin{definition} \label{defn:SRF_erg_free}
We say that a stationary \sas random field is free when its minimal group measure space construction is so. 
\end{definition}

\noindent As Remark~\ref{remark:erg_free_not_restr} below suggests, all known families of stationary \sas random fields are free. 

Note that any (not necessarily stationary) \sas random field $\mathbf{X}=\{X_t\}_{t \in G}$ (defined on the probability space $(\Omega, \mathcal{A}, \mathbb{P})$ mentioned in the first paragraph of Section~\ref{sec:intro}) induces a canonical probability measure $\mathbb{P}_\mathbf{X}$ on the measurable space $\big(\prod_{t \in G} \mathbb{R},\, \bigotimes_{t \in G} \mathbb{B}_\mathbb{R}\big)$ defined by
\[
\mathbb{P}_\mathbf{X}:= \mathbb{P}\big(\{\omega \in \Omega: (X_t(\omega): t \in G) \in \cdot\}\big).
\]
It is easy to observe that $\{X_t\}_{t \in G}$ is (left) stationary if and only if the left tranlation $\{\tau_t\}_{t \in G}$ is a measure presevting $G$-action on the induced probability space $\big(\prod_{t \in G} \mathbb{R},\, \bigotimes_{t \in G} \mathbb{B}_\mathbb{R},\, \mathbb{P}_\mathbf{X}\big)$. 
\begin{definition} \label{defn:erg_SRF}
A stationary \sas random field $\mathbf{X}=\{X_t\}_{t \in G}$ is called ergodic if the measure-preserving $G$-action $\{\tau_t\}$ is so. 
\end{definition}
Ergodicity is an important notion for stochastic processes and it helps in establishing limit theorems for stationary \sas random fields as well. See, for example, \cite{mikosch:samorodnitsky:2000a}, whose main result (on estimation of ruin probabilities for stationary \sas random fields indexed by $\mathbb{Z}$; see Theorem~2.5 therein) is valid under the assumption of ergodicity. On the othar hand, if a stationary \sas random field $\{X_t\}_{t \in \mathbb{Z}^d}$ is ergodic, then  whenever $h: \mathbb{R} \to \mathbb{R}$ is a function such that $\mathbb{E}(|h(X_0)|) < \infty$, using multiparameter erdodic theorem of \cite{tempelman:1972} (see also Theorem~2.8 in Page 205 of \cite{krengel:1985}), we can find a strongly consistent estimator for $\theta:=\mathbb{E}(h(X_0))$ as follows:
\[
\hat{\theta}_n:= \frac{1}{(2n+1)^d} \sum_{\|t\|_\infty \leq n} h(X_t) \convas \theta
\]
as $n \to \infty$. Therefore finding characterizations of ergodicity is an important problem in statistics and probability theory. 

The multiparameter erdodic theorem mentioned in the above paragraph also yields that a measure-preserving $\mathbb{Z}^d$-action $\{\phi_t\}_{t \in \mathbb{Z}^d}$ on a probability space $(S, \mathcal{S}, \mu)$ is ergodic if and only if for all $A, B \in \mathcal{S}$,
\[
\lim_{n \to \infty} \frac{1}{(2n+1)^d} \sum_{\|t\|_\infty \leq n} \mu(A \cap \phi_t(B)) = \mu(A)\mu(B).
\]
Recall that such a $\mathbb{Z}^d$-action $\{\phi_t\}_{t \in \mathbb{Z}^d}$ is weakly mixing if for all $A, B \in \mathcal{S}$,
\[
\lim_{n \to \infty} \frac{1}{(2n+1)^d} \sum_{\|t\|_\infty \leq n} \big|\mu(A \cap \phi_t(B)) - \mu(A)\mu(B)\big| =0.
\]
\begin{definition}
A stationary \sas random field $\mathbf{X}=\{X_t\}_{t \in \mathbb{Z}^d}$ is called weakly mixing if the shift action $\{\tau_t\}_{t \in \mathbb{Z}^d}$ \big(on the on the induced probability space $\big(\prod_{t \in \mathbb{Z}^d} \mathbb{R},\, \bigotimes_{t \in \mathbb{Z}^d} \mathbb{B}_\mathbb{R},\, \mathbb{P}_\mathbf{X}\big)$\big) is so. 
\end{definition}
\noindent Weak mixing implies ergodicity but the converse does not hold in general. In case of $\mathbb{Z}^d$-indexed stationary \sas random fields, however, the two notions are equivalent; see Theorem~\ref{thm:erg_charac_neveu}, which gives another characterization of ergodicity of $\{X_t\}_{t \in \mathbb{Z}^d}$ (apart from being weakly mixing) and forms a building block in the proof of one of our main results. 

\begin{thm}[\cite{samorodnitsky:2005a} ($d=1$) and \cite{wang:roy:stoev:2013} ($d > 1$)] \label{thm:erg_charac_neveu}
	Let $\{X_t\}_{t \in \mathbb{Z}^d}$ be a stationary \sas random field. Then the following are equivalent.  
	\begin{enumerate}
		\item $\{X_t\}_{t \in \mathbb{Z}^d}$ is ergodic;
		\item $\{X_t\}_{t \in \mathbb{Z}^d}$ is weakly mixing;
		\item the nonsingular $\mathbb{Z}^d$-action $\{\phi_t\}_{t \in \mathbb{Z}^d}$ arising in some (equivalently, any) Rosi\'{n}ski representation is null, i.e., the positive part of $\{\phi_t\}$ has measure zero. 
	\end{enumerate}
\end{thm}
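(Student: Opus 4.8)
The plan is to establish the cycle of implications $(2)\Rightarrow(1)\Rightarrow(3)\Rightarrow(2)$. The step $(2)\Rightarrow(1)$ is the general measure-theoretic fact that weak mixing implies ergodicity and requires no structural input. All the content lies in tying the probabilistic properties of the shift $\{\tau_t\}$ on the path space $\big(\prod_{\mathbb{Z}^d}\mathbb{R},\,\mathbb{P}_{\mathbf X}\big)$ to the ergodic-theoretic nullity of the underlying nonsingular action, and the bridge I would use is a single non-negative overlap functional attached to a Rosi\'{n}ski representation.

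First I would record the joint characteristic function: writing $f_t$ for the spectral functions of a Rosi\'{n}ski representation \eqref{eq1}, the field has joint characteristic function $\exp\big(-\|\sum_j\theta_j f_{t_j}\|_\alpha^\alpha\big)$ over finite index sets. Feeding the characters $\exp\!\big(i\sum_j\theta_j X_{t_j}\big)$ into the multiparameter ergodicity/weak-mixing criteria recalled just before the statement (via the theorem of \cite{tempelman:1972}), I would reduce \emph{both} properties to the Ces\`aro behaviour of the scalar dependence quantity coming from this characteristic function. The essential point is that, after this reduction, the quantity controlling the averages is comparable to the manifestly non-negative overlap
\[
\Theta(t):=\int_S\big(|f(s)|\wedge|f_t(s)|\big)^{\alpha}\,\mu(ds),\qquad f_t:=f\circ\phi_t\,\Big(\tfrac{d\mu\circ\phi_t}{d\mu}\Big)^{1/\alpha},
\]
built from the elementary relation $|a|^\alpha+|b|^\alpha-|a+b|^\alpha\asymp(|a|\wedge|b|)^\alpha$. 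Because $\Theta(t)\ge 0$, the average $\frac{1}{(2n+1)^{d}}\sum_{\|t\|_\infty\le n}\Theta(t)$ and the corresponding average of absolute values coincide; this is precisely why ergodicity and weak mixing collapse to a single condition, giving $(1)\Leftrightarrow(2)$ once the reduction is carried out.

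Next I would identify the vanishing of these Ces\`aro averages with nullity of $\{\phi_t\}$ through the Neveu (positive--null) decomposition $S=\mathcal{P}\cup\mathcal{N}$ from Section~\ref{sec:two_decomp}. On the positive part there is a $\{\phi_t\}$-invariant probability measure $\tau\sim\mu|_{\mathcal P}$; applying the multiparameter mean/pointwise ergodic theorem to $\tau$ shows that the time averages of $|f|^\alpha$ converge to a strictly positive conditional expectation on the support of $f$, whence $\frac{1}{(2n+1)^{d}}\sum_{\|t\|_\infty\le n}\Theta(t)$ stays bounded away from $0$. Thus $\mu(\mathcal P)>0$ forces non-ergodicity, which is $(1)\Rightarrow(3)$ in contrapositive form. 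Conversely, on the null part there is no equivalent invariant probability measure, and the ratio ergodic theorem drives $\frac{1}{(2n+1)^{d}}\sum_{\|t\|_\infty\le n}\Theta(t)\to 0$; combined with the reduction this yields weak mixing, i.e.\ $(3)\Rightarrow(2)$. Independence of the conclusion from the chosen Rosi\'{n}ski representation follows because the positive part is an ergodic-theoretic invariant and, by \eqref{reln_between_f_and_fast}, $\Theta$ transforms consistently between representations.

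The hard part will be the averaging analysis in the genuinely multiparameter regime $d>1$. For $d=1$ one can invoke the classical Hopf ratio ergodic theorem directly, but for $\mathbb{Z}^d$ the ratio ergodic theorem along the boxes $\{\|t\|_\infty\le n\}$ is a deep result (Hochman's multiparameter ratio ergodic theorem), and building the Neveu decomposition for $\mathbb{Z}^d$-actions so that $\mathcal{P}$ is genuinely the maximal set carrying an equivalent invariant probability measure is exactly the delicate construction of \cite{wang:roy:stoev:2013}. A secondary obstacle is the reduction from abstract ergodicity/weak mixing of the shift to the scalar condition on $\Theta$: it requires approximating cylinder functions by characters and, crucially, controlling the sign of $|a|^\alpha+|b|^\alpha-|a+b|^\alpha$ (which is not definite for $\alpha\ge 1$) so that the governing quantity may be replaced by the non-negative overlap $\Theta$ --- the very feature that makes $(1)\Leftrightarrow(2)$ automatic.
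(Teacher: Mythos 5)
The paper does not prove Theorem~\ref{thm:erg_charac_neveu} at all: it is imported with a citation from \cite{samorodnitsky:2005a} ($d=1$) and \cite{wang:roy:stoev:2013} ($d>1$), so the only internal material to compare against is the paper's own commentary on those proofs (Remark~\ref{remark:positive_part_using_II_1} and the discussion in Section~\ref{sec:conj}). Your plan is a faithful reconstruction of exactly the cited argument: reduction of both ergodicity and weak mixing, via joint characteristic functions, to Ces\`aro decay of the non-negative overlap $\int_S\big(|f|\wedge|f_t|\big)^\alpha d\mu$ (its non-negativity being the reason the two notions coincide), then the Neveu decomposition, with a multiparameter ergodic theorem \cite{tempelman:1972} keeping the averages bounded away from zero on the positive part and the stochastic/ratio ergodic theorem of \cite{wang:roy:stoev:2013} (cf.\ \cite{hochman:2010}) forcing them to zero on the null part --- precisely the ``twofold use of ergodic theorems'' that the paper itself attributes to those proofs. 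One caution on execution: the pointwise comparison $|a|^\alpha+|b|^\alpha-|a+b|^\alpha\asymp(|a|\wedge|b|)^\alpha$ is false for $\alpha\ge 1$ (take $|b|\ll|a|$ with equal signs: the left side is negative and of magnitude about $\alpha|a|^{\alpha-1}|b|\gg(|a|\wedge|b|)^\alpha$), so the reduction must instead go through the one-sided estimates and symmetrized combinations $f\pm f_t$ used in the cited works (or the known equivalence of ergodicity and weak mixing for stationary infinitely divisible processes); since you flag this sign problem explicitly as an obstacle to be handled rather than assert the comparison as proved, this is an anticipated gap in execution, not a flaw in the approach.
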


As mentioned before, with the help of Theorems~\ref{thm:cntrl_n_erg_decomp} and \ref{thm:erg_charac_neveu} above, we get out next result, which is a new characterization of ergodicity (equvalently weak mixing) of $\mathbb{Z}^d$-indexed stationary \sas random fields based on operator algebraic properties (namely, through Definition~\ref{no_II_1_factor}) of their Rosi\'nski group measure space constructions. 

\begin{thm} \label{thm:erg_charac}
Suppose that $\{X_t\}_{t \in \mathbb{Z}^d}$ is a stationary \sas random field and it has a free Rosi\'{n}ski group measure space construction. Then the following are equivalent.
\begin{enumerate}
	\item[(1)] $\{X_t\}_{t \in \mathbb{Z}^d}$ is ergodic;
	\item[(2)] $\{X_t\}_{t \in \mathbb{Z}^d}$ is weakly mixing;
	\item[(3)] the nonsingular $\mathbb{Z}^d$-action arising in some (equivalently, any) Rosi\'{n}ski representation is null;
	\item[(4)] some (equivalently, any) free Rosi\'{n}ski group measure space construction does not admit a $II_1$ factor in its central decomposition. 
\end{enumerate}
\end{thm}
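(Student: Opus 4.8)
The plan is to take the chain $(1)\Leftrightarrow(2)\Leftrightarrow(3)$ for granted from Theorem~\ref{thm:erg_charac_neveu} and to concentrate entirely on proving $(3)\Leftrightarrow(4)$. First I would fix an ergodically free Rosi\'{n}ski representation of $\{X_t\}$ with underlying nonsingular $\mathbb{Z}^d$-action $\{\phi_t\}$ on $(S,\mathcal{S},\mu)$, and let $\{(S_y,\mu_y):y\in Y\}$ be its ergodic decomposition from Theorem~\ref{thm:erg_comp}, with $\nu=\mu\circ\Psi^{-1}$. By Theorem~\ref{thm:cntrl_n_erg_decomp}, the crossed product $M=\LLL^\infty(S,\mu)\rtimes\mathbb{Z}^d$ has central decomposition $M=\int_Y M_y\,\nu(dy)$ with $M_y=\LLL^\infty(S_y,\mu_y)\rtimes\mathbb{Z}^d$. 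By part~(4) of Theorem~\ref{thm:erg_comp} the restriction of $\{\phi_t\}$ to each $(S_y,\mu_y)$ is ergodic, and by ergodic freeness it is also free; hence Theorem~\ref{thm:factor_ergodic}(1) makes each $M_y$ a factor, and Theorem~\ref{thm:factor_ergodic}(3) shows $M_y$ is of type $II_1$ if and only if $\{\phi_t\}|_{S_y}$ is positive. Writing $Y_+:=\{y\in Y:\{\phi_t\}|_{S_y}\text{ is positive}\}$, condition~(4) is therefore exactly the statement $\nu(Y_+)=0$.

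It then remains to identify $\nu(Y_+)=0$ with condition~(3), i.e. with $\mu(\PP)=0$, where $\PP$ is the positive part of the global action; this is the heart of the argument. I would establish the sharper fact that, modulo $\mu$-null sets, $\PP=\bigcup_{y\in Y_+}S_y$. Granting this, $\mu(\PP)=\int_{Y_+}\mu_y(S_y)\,\nu(dy)$ (using part~(3) of Theorem~\ref{thm:erg_comp}, so that $\mu_y$ is concentrated on $S_y$), and since $\mu_y(S_y)>0$ for $\nu$-almost every $y$, this gives $\mu(\PP)=0\iff\nu(Y_+)=0$. For the inclusion $\PP\subseteq\bigcup_{Y_+}S_y$, I note that $\PP$ is $\{\phi_t\}$-invariant, hence a union $\bigcup_{y\in Y_\PP}S_y$ of ergodic components; on $\PP$ there is a finite invariant measure $\tau\sim\mu|_\PP$, so part~(6) of Theorem~\ref{thm:erg_comp} yields $\mu_y\sim\tau|_{S_y}$ for $y\in Y_\PP$, exhibiting each such component as positive, whence $Y_\PP\subseteq Y_+$. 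For the reverse inclusion I would use the maximality of $\PP$: for $y\in Y_+$ let $\tau_y$ be the (essentially unique, by ergodicity) invariant probability measure equivalent to $\mu_y$, pick a strictly positive weight $w$ with $\int_{Y_+}w\,d\nu<\infty$ (possible since $\nu$ is $\sigma$-finite), and set $\tau=\int_{Y_+}w(y)\,\tau_y\,\nu(dy)$; this is a finite $\{\phi_t\}$-invariant measure equivalent to $\mu$ on $\bigcup_{Y_+}S_y$, so maximality forces $\bigcup_{Y_+}S_y\subseteq\PP$.

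Putting these together gives $(4)\iff\nu(Y_+)=0\iff\mu(\PP)=0\iff(3)$, which completes the equivalence. The ``some (equivalently, any)'' clauses are then automatic: nullity of the action in~(3) is representation-independent by Theorem~\ref{thm:erg_charac_neveu}, and having just shown $(4)\iff(3)$ for every ergodically free Rosi\'{n}ski representation, property~(4) holds for one such representation precisely when it holds for all.

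The step I expect to be most delicate is the reverse inclusion $\bigcup_{Y_+}S_y\subseteq\PP$: namely the measurable selection $y\mapsto\tau_y$ of normalized invariant measures on the positive components, and the verification that the assembled $\tau$ is genuinely \emph{equivalent} to $\mu$ across the direct integral rather than merely absolutely continuous with respect to it. Both points rest on the measurable structure underlying Theorem~\ref{thm:erg_comp}; I would either invoke that measurability directly or route the identification $\PP=\bigcup_{Y_+}S_y$ through Theorem~4.1 of \cite{wang:roy:stoev:2013}, which packages exactly this compatibility between the Neveu and ergodic decompositions.
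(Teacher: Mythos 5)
Your skeleton coincides with the paper's: (1)$\Leftrightarrow$(2)$\Leftrightarrow$(3) is imported from Theorem~\ref{thm:erg_charac_neveu}, and (3)$\Leftrightarrow$(4) is run through Theorems~\ref{thm:cntrl_n_erg_decomp}, \ref{thm:factor_ergodic} and \ref{thm:erg_comp}. Your ``forward inclusion'' (every ergodic component of the positive part $\mathcal{P}$ is positive, via part~(6) of Theorem~\ref{thm:erg_comp}) is in substance the paper's proof of (4)$\Rightarrow$(3), and your ``reverse inclusion'' (assembling fibrewise invariant probability measures into a finite invariant measure equivalent to $\mu$, then invoking maximality of $\mathcal{P}$) is in substance the paper's proof of (3)$\Rightarrow$(4). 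So this is the same argument, reorganized around the single identity $\mathcal{P}=\bigcup_{y\in Y_+}S_y$.

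That reorganization, however, is where a genuine gap appears. Your pivot set $Y_+=\{y\in Y:\{\phi_t\}|_{S_y}\mbox{ is positive}\}=\{y\in Y: M_y \mbox{ is a } II_1 \mbox{ factor}\}$ need not be a measurable subset of $Y$; the paper's Remark~\ref{remark:positive_part_using_II_1} flags precisely this obstruction and explains that the proof is structured to resolve it. Consequently the identification ``(4) is exactly $\nu(Y_+)=0$'', the formula $\mu(\mathcal{P})=\int_{Y_+}\mu_y(S_y)\,d\nu(y)$, and above all the construction $\tau=\int_{Y_+}w(y)\,\tau_y\,\nu(dy)$ are not well defined as written. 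The direction (4)$\Rightarrow$(3) survives an outer-measure reading, since the measurable set $Y_{\mathcal{P}}$ with $\mathcal{P}=\Psi^{-1}(Y_{\mathcal{P}})$ (mod $\mu$) is $\nu$-a.e.\ contained in $Y_+$; but in (3)$\Rightarrow$(4) the failure of (4) (see Definition~\ref{no_II_1_factor}) only tells you that $Y_+$ is not covered by a measurable $\nu$-null set, and you cannot integrate over a possibly non-measurable set. The paper sidesteps this by never touching $Y_+$: it integrates only over measurable sets handed to it by the hypotheses --- the co-null $Z=Y\setminus N_1$ supplied by (4), and a positive-measure $Y_0$ (then $V=Y_0\cap N_0^c$) supplied by the failure of (4) --- and runs both implications by contradiction. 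Your proposed repairs do not close this hole: the measurability in Theorem~\ref{thm:erg_comp} concerns $y\mapsto\mu_y(B)$ for fixed $B$, not measurability of $Y_+$ or of a selection $y\mapsto\tau_y$; and Theorem~4.1 of \cite{wang:roy:stoev:2013} is, in the paper's notation, Theorem~\ref{thm:erg_charac_neveu} (the ergodicity-equals-nullity criterion), which says nothing about compatibility of the Neveu decomposition with the ergodic decomposition. The measurable-selection worry you do raise is legitimate --- even the paper uses, without comment, that $v\mapsto\tau_v(B\cap S_v)$ is $\nu$-measurable on $V$ --- but it is downstream of the $Y_+$ problem: once you work over a \emph{measurable} set of components on which the factors are $II_1$, as the paper does, the selection is the only remaining issue, and your argument as written never reaches that stage.
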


\begin{proof}
  Because of Theorem~\ref{thm:erg_charac_neveu}, the statements (1), (2) and (3) are equivalent even without the hypothesis of existence of a free Rosi\'{n}ski group measure space construction. To complete the proof, it is enough verify that (3) and (4) are equivalent. To this end, we use the hypothesis of the theorem to get a free nonsingular $\mathbb{Z}^d$-action $\{\phi_t\}$ (on a standard measure space $(S, \mu)$) arising in a Rosi\'{n}ski representation of $\{X_t\}_{t \in \mathbb{Z}^d}$. We invoke Theorem~\ref{thm:erg_comp} to obtain the ergodic decomposition $\{(S_y, \mu_y): y \in Y\}$ of $S$ wrt the action $\{\phi_t \}_{t \in \mathbb{Z}^d}$. By virtue of Theorem~\ref{thm:cntrl_n_erg_decomp}, the free Rosi\'{n}ski group measure space construction $M = \LLL^\infty(S, \mu) \rtimes \mathbb{Z}^d$ has central decomposition 
  \begin{equation*}
  M = \int_Y M_y\, \nu(dy)  
  \end{equation*}
  with $M_y := \LLL^\infty(S_y, \mu_y) \rtimes \mathbb{Z}^d$ for each $y \in Y$. As per the convention mentioned in the beginning of Section~\ref{sec:erg_th}, all the statements used in the proof are true modulo null sets under the appropriate measure. \medskip
  
\noindent \textbf{Proof of (4) implies (3):} Suppose (4) holds and $\mathcal{P} \subseteq S$ is the positive part of $\{\phi_t\}_{t \in \mathbb{Z}^d}$. We have to show $\mu(\mathcal{P})=0$. Using the hypothesis (4), we can get a measurable $N_1 \subset Y$ such that $\nu(N_1)=0$ and for all $ y \in Z:=Y \setminus N_1$, the factor $M_y = \LLL^\infty(S_y, \mu_y) \rtimes \mathbb{Z}^d$ is not of type $II_1$. We first claim that it is enough to establish 
\begin{equation}
\mathcal{P} \subseteq \Psi^{-1}(N_1), \label{mathcal_P_subset_II_1}
\end{equation}
where $\Psi:S \to Y$ is the measurable map obtained in Theorem~\ref{thm:erg_comp}. To prove the claim, recall that $\nu=\mu \circ \Psi^{-1}$ and hence \eqref{mathcal_P_subset_II_1} yields 
\[
\mu(\mathcal{P})  \leq \mu \circ \Psi^{-1}(N_1) = \nu(N_1) =0.
\]

In order to complete the proof of (4) implies (3), it remains to show that \eqref{mathcal_P_subset_II_1} holds. Suppose that this inclusion does not hold. Then  
\[
\mathcal{P}_1 := \mathcal{P} \cap \Psi^{-1}(Z) \neq \emptyset
\] 
modulo $\mu$. This means by Theorem~\ref{thm:erg_comp},
\begin{align*}
0 < \mu(\mathcal{P}_1) 
&= \int_Y \mu_y(\mathcal{P}_1) d\nu(y)\\ 
&= \int_{Z} \mu_y(\mathcal{P}_1 \cap S_y) d\nu(y) + \int_{Y\setminus Z} \mu_y(\mathcal{P}_1 \cap S_y) d\nu(y)\\
&=\int_{Z} \mu_y(\mathcal{P}_1 \cap S_y) d\nu(y)\\ 
&= \int_{Z} \mu_y(S_y) d\nu(y).
\end{align*}
Therefore, there exists $z \in Z$ such that $\mu_{z}(S_{z})>0$. Since $\mathcal{P}$ and $S_{z}$ are both $\{\phi_t\}$-invariant, and $\{\phi_t\}$ restricted to $(S_{z}, \mu_{z})$ is ergodic, it follows that $S_{z} \subseteq \mathcal{P}$. 

Using the $\{\phi_t\}$-invariance of $\mathcal{P}$ once again, we get that $\{(S_y, \mu_y): y \in \Psi(\mathcal{P})\}$ is the ergodic decomposition of $\mathcal{P}$ wrt $\{\phi_t\}$ restricted to $\mathcal{P}$. Also $\mathcal{P}$, being the positive part, supports a $\{\phi_t\}$-invariant finite mesure $\tau \sim \mu$. Therefore by Theorem~\ref{thm:erg_comp}, $\tau_z:=\tau|_{S_z} \sim \mu_z$. Recall that $\mu_{z}(S_{z})>0$ and hence by equivalence, $\tau_z(S_{z})>0$. In particular, $S_z$ supports a nonzero $\{\phi_t\}$-invariant finite measure $\tau_z$ equivalent to $\mu_z$. This implies that $\{\phi_t\}$ restricted to $(S_z, \mu_z)$ is a positive action. Hence  Theorem~\ref{thm:factor_ergodic} yields that $M_{z} = \LLL^\infty(S_{z}, \mu_{z}) \rtimes \mathbb{Z}^d$ is a $II_1$ factor, which contradicts that $z \in Z$. Therefore \eqref{mathcal_P_subset_II_1} holds. This completes the proof of (4) implies (3) in Theorem~\ref{thm:erg_charac}. \medskip

\noindent \textbf{Proof of (3) implies (4):} Suppose (3) holds. We have to show that (4) holds. First note that
\[
0< \mu(S)=\int_{Y} \mu_y(S)d\nu(y)=\int_{Y} \mu_y(S_y)d\nu(y).
\]
This shows that there exits a measurable $N_0 \subset Y$ such that $\nu(N_0)=0$ and for all $y \in Y \setminus N_0$, we have  $\mu_y(S_y)>0$. Suppose (4) does not hold. Then there exists measurable $Y_0 \subseteq Y$ such that $\nu(Y_0)>0$ and for all $y \in Y_0$, the factor $M_y=\LLL^\infty(S_y, \mu_y) \rtimes \mathbb{Z}^d$ is of type $II_1$. Since $(Y, \nu)$ is a standard measure space, we may assume, without loss of generality, that $\nu(Y_0) \in (0, \infty)$. Set $V= Y_0 \cap N_0^c$. 

Clearly, by construction, $\nu(V)=\nu(Y_0) \in (0,\infty)$. Also for each $v \in V$, we get that  $\mu_v(S_v) >0$ and $M_v = \LLL^\infty(S_v, \mu_v) \rtimes \mathbb{Z}^d$ is a $II_1$ factor, and hence by Theorem~\ref{thm:factor_ergodic}, the action $\{\phi_t\}_{t \in \mathbb{Z}^d}$ restricted to $(S_v, \mu_v)$ is positive. This means that each $S_v$ supports a $\{\phi_t\}$-invariant finite measure $\tau_v \sim \mu_v$. Since $\mu_v(S_v) >0$, by equivalence, $\tau_v(S_v) >0$ for each $v \in V$. Therefore, by normalizing, we may assume that each $\tau_v$ is a probability measure on $S_v$. 

We will now arrive at the contradiction to (3) by constructing a measurable $\{\phi_t\}$-invariant set $\mathcal{Q} \subset S$ of strictly positive $\mu$ measure that supports a $\{\phi_t\}$-invariant finite measure $\tau \sim \mu|_{\mathcal{Q}}$. To this end, define $\mathcal{Q}= \Psi^{-1}(V)$, which is clearly $\{\phi_t\}$-invariant. Also, $$
\mu(\mathcal{Q}) = \mu \circ \Psi^{-1}(V) = \nu(V) >0.
$$
For all measurable $B \subseteq \mathcal{Q}$, define
\[
\tau(B):= \int_V \tau_v(B \cap S_v) d\nu(v).
\]

It is easy to check using monotone convergence theorem that $\tau$ is a measure on $\mathcal{Q}$ because each $\tau_v$ is a measure on $S_v$. Also 
\[
\tau(\mathcal{Q}) = \int_V \tau_v(\mathcal{Q} \cap S_v) d\nu(v) = \int_V \tau_v(S_v) d\nu(v) = \nu(V) \in (0, \infty)
\]
since each $\tau_v$ is a probability measure on $S_v$. This shows that $\tau$ is a nonzero finite measure on $\mathcal{Q}$. 

We now claim that $\{\phi_t\}_{t \in \mathbb{Z}^d}$ restricted to $\mathcal{Q}$ preserves the probability measure $\tau$. This is true because for each $t \in \mathbb{Z}^d$ and for each measurable $B \subseteq \mathcal{Q}$, we get by $\{\phi_t\}$-invariance of each $\tau_v$ and Theorem~\ref{thm:erg_comp} that 
\begin{align*}
\tau(\phi_t(B)) &= \int_V \tau_v(\phi_t(B) \cap S_v) d\nu(v)\\
                &= \int_V \tau_v(\phi_t(B \cap S_v)) d\nu(v)\\
                &= \int_V \tau_v(B \cap S_v) d\nu(v)\\
                &=\tau(B).
\end{align*}

Finally, we verify that $\tau \sim \mu|_{\mathcal{Q}}$. To achieve this, fix measurable $B \subseteq \mathcal{Q}$. Repeating an argument used in the proof of the reverse implication, we get
\begin{align*}
\mu(B) & =\int_Y \mu_y(B) d\nu(y) 
       = \int_V \mu_y(B \cap S_y) d\nu(y).
\end{align*}
Therefore $\mu(B)=\int_V \mu_y(B \cap S_y) d\nu(y)=0$ holds if and only if there exists a $\nu$-null set $N_B$  such that for each $y \in V \setminus N_B$,  $\mu_y(B \cap S_y)=0$ holds if and only if there exists a $\nu$-null set $N_B$ such that for each $y \in V \setminus N_B$, $\tau_y(B \cap S_y)=0$ holds if and only if $\tau(B)=\int_V \tau_y(B \cap S_y) d\nu(y)=0$ holds. This argument is true for any measurable subset $B$ of  $\mathcal{Q}$ and it shows $\tau \sim \mu|_{\mathcal{Q}}$. 

Therefore we have produced a measurable $\{\phi_t\}$-invariant subset $\mathcal{Q}$ of $S$ such that $\mu(\mathcal{Q}) >0$ and $\mathcal{Q}$ supports a nonzero finite measure $\tau~(\sim \mu|_{\mathcal{Q}})$ that preserves $\{\phi_t\}$. This shows that $\{\phi_t\}$ has a nontrivial positive part contradicting (3). Therefore (4) must hold. This finishes the proof of Theorem~\ref{thm:erg_charac}. 
\end{proof}

\begin{remark}\label{remark:positive_part_using_II_1}
\noindent \textnormal{In the above proof, we roughly establish that} 
\begin{equation*}
\mbox{``}\mathcal{P}=\Psi^{-1}\big(\{y \in Y: M_y=\LLL^\infty(S_y, \mu_y) \rtimes \mathbb{Z}^d \mbox{ is a $II_1$ factor}\}\big)\mbox{''}
\end{equation*}
\textnormal{except that the set $\{y \in Y: M_y \mbox{ is a $II_1$ factor}\}$ may not be a measurable subset of $Y$. We resolve this measurability issue with the help of various facets of the ergodic decomposition as given in Theorem~\ref{thm:erg_comp}. In fact, the same proof will establish the equivalence of the statements (3) and (4) in Theorem~\ref{thm:erg_charac} for any countable indexing group, not just $\mathbb{Z}^d$. Therefore, if Theorem~\ref{thm:erg_charac_neveu} holds for stationary \sas fields indexed by more general class of groups, then so does Theorem~\ref{thm:erg_charac}. In particular, the obstracle is not von Neumann algebraic but ergodic theoretic, namely, the unavailibility of ergodic theorem for nonsingular actions of more general groups; see the discussions in \cite{jarrett:2019} and also in Section~\ref{sec:conj} below.} 
\end{remark}

\begin{remark} \label{remark:erg_free_not_restr}
\textnormal{The assumption of existence of a free Rosi\'{n}ski group measure space construction is neither needed for the equivalence of the first three statements of Theorem~\ref{thm:erg_charac} (as seen in Theorem~\ref{thm:erg_charac_neveu}) nor very restrictive because all known families of stationary \sas random fields are generated by free actions.} 
\end{remark}

We will say that a stationary \sas random field $\{X_t\}_{t \in \mathbb{Z}^d}$ has a nontrivial ergodic part if the stationary \sas random field $\{X^N_t\}_{t \in \mathbb{Z}^d}$ in (3.6) of \cite{wang:roy:stoev:2013} is nontrivial. Otherwise, we will say that $\{X_t\}_{t \in \mathbb{Z}^d}$ does not have a nontrivial ergodic part and this should be thought of as ``complete non-ergodicity'' of $\{X_t\}$. The following result also uses Definition~\ref{no_II_1_factor} and it is essentially a characterization of absolute lack of ergodicity for a stationary \sas random field.

\begin{thm} \label{thm:erg_part}
	Suppose that $\{X_t\}_{t \in \mathbb{Z}^d}$ is a stationary \sas random field and it has a free Rosi\'{n}ski group measure space construction. Then the following are equivalent.
	\begin{enumerate}
		\item[(i)] $\{X_t\}_{t \in \mathbb{Z}^d}$ does not have a nontrivial ergodic part;
		\item[(ii)] the nonsingular $\mathbb{Z}^d$-action $\{\phi_t\}$ arising in some (equivalently, any) Rosi\'{n}ski representation is positive, i.e., the null part of $\{\phi_t\}$ is of measure zero;
		\item[(iii)] some (equivalently, any) free Rosi\'{n}ski group measure space construction $M$ admits only $II_1$ factor(s) in its central decomposition.
	\end{enumerate}
\end{thm}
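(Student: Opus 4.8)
The plan is to split the three-way equivalence into a purely ergodic-theoretic half, (i)$\Leftrightarrow$(ii), and the genuinely new operator-algebraic half, (ii)$\Leftrightarrow$(iii). For (i)$\Leftrightarrow$(ii) I would appeal to the Neveu decomposition $S=\mathcal{P}\cup\mathcal{N}$ together with Theorem~4.1 of \cite{wang:roy:stoev:2013}: the ergodic part of $\{X_t\}$ is exactly the portion of the field supported on the null part $\mathcal{N}$, so $\{X_t\}$ has no nontrivial ergodic part precisely when $\mu(\mathcal{N})=0$, which is the definition of $\{\phi_t\}$ being positive. No von Neumann algebra is needed here, and this parallels how the first three statements of Theorem~\ref{thm:erg_charac} were handled through Theorem~\ref{thm:erg_charac_neveu}.

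For the new equivalence (ii)$\Leftrightarrow$(iii), I would first invoke ergodic freeness to apply Theorem~\ref{thm:cntrl_n_erg_decomp}, yielding the central decomposition $M=\int_Y M_y\,\nu(dy)$ with $M_y=\LLL^\infty(S_y,\mu_y)\rtimes\mathbb{Z}^d$, where $\{(S_y,\mu_y):y\in Y\}$ is the ergodic decomposition of Theorem~\ref{thm:erg_comp}, $\Psi:S\to Y$ is the associated map, and $\nu=\mu\circ\Psi^{-1}$. The pivotal per-fibre observation is that on each $(S_y,\mu_y)$ the action is ergodic (Theorem~\ref{thm:erg_comp}(4)) and free (ergodic freeness), so by parts (1) and (3) of Theorem~\ref{thm:factor_ergodic} each $M_y$ is a factor, and it is of type $II_1$ if and only if $\{\phi_t\}$ restricted to $(S_y,\mu_y)$ is positive. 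Thus (iii) amounts to saying that $\{\phi_t\}|_{S_y}$ is positive for $\nu$-almost every $y$, and the whole task reduces to matching global positivity with positivity on $\nu$-almost every ergodic fibre.

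The direction (ii)$\Rightarrow$(iii) is then direct: from the $\{\phi_t\}$-invariant probability measure $\tau\sim\mu$, Theorem~\ref{thm:erg_comp}(6) gives $\mu_y\sim\tau|_{S_y}$, and since $\tau$ is finite and invariant with $S_y$ invariant, $\tau|_{S_y}$ is a finite invariant measure equivalent to $\mu_y$; hence $\{\phi_t\}|_{S_y}$ is positive and $M_y$ is $II_1$ for every $y$ outside the $\nu$-null set $\{y:\mu_y(S_y)=0\}$ (shown null in the proof of Theorem~\ref{thm:erg_charac}), which is exactly (iii). For (iii)$\Rightarrow$(ii) I would argue by contraposition: if $\mu(\mathcal{N})>0$, write the invariant set as $\mathcal{N}=\Psi^{-1}(Y_\mathcal{N})$ with $\nu(Y_\mathcal{N})=\mu(\mathcal{N})>0$; were $M_y$ of type $II_1$ on a positive-$\nu$-measure subset of $Y_\mathcal{N}$, then running the gluing construction from the proof of (3)$\Rightarrow$(4) of Theorem~\ref{thm:erg_charac} on a finite-$\nu$-measure piece would produce a positive-$\mu$-measure invariant subset of $\mathcal{N}$ carrying a finite invariant measure equivalent to $\mu$; by the maximality property defining the positive part $\mathcal{P}$ this subset would lie in $\mathcal{P}$, contradicting $\mathcal{N}=\mathcal{P}^c$. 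Hence $M_y$ fails to be $II_1$ on $Y_\mathcal{N}$ up to $\nu$-null sets, so (iii) fails.

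The hard part will be the measurability subtlety already flagged in Remark~\ref{remark:positive_part_using_II_1}, namely making sense of the fibrewise selection $y\mapsto\tau_y$ inside the integral that defines the glued measure. I would resolve it exactly as in the proof of Theorem~\ref{thm:erg_charac}, using that on an ergodic component with a positive action the invariant probability measure equivalent to $\mu_y$ is unique, so the selection is canonical and measurable with respect to the standard Borel structure on $Y$. Conceptually, the entire equivalence (ii)$\Leftrightarrow$(iii) is just the reading of the identity $\mathcal{P}=\Psi^{-1}(\{y\in Y:M_y\text{ is a }II_1\text{ factor}\})$ from Remark~\ref{remark:positive_part_using_II_1} at the opposite extreme to Theorem~\ref{thm:erg_charac}: there $\mathcal{P}$ is $\mu$-null, corresponding to no $II_1$ factors, whereas here $\mathcal{N}$ is $\mu$-null, corresponding to only $II_1$ factors.
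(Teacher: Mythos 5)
Your proposal is correct and takes essentially the same route as the paper: (i)$\Leftrightarrow$(ii) is delegated to the known characterization of \cite{wang:roy:stoev:2013}, and (ii)$\Leftrightarrow$(iii) is obtained exactly as in the paper by combining Theorem~\ref{thm:cntrl_n_erg_decomp}, the ergodic decomposition of Theorem~\ref{thm:erg_comp}, Theorem~\ref{thm:factor_ergodic}(3), and the gluing/maximality arguments recycled from the proof of Theorem~\ref{thm:erg_charac}. The only cosmetic difference is that you prove (ii)$\Rightarrow$(iii) directly by restricting the global invariant probability measure to the ergodic fibres via Theorem~\ref{thm:erg_comp}(6), whereas the paper reaches the same conclusion by contradiction, showing that a positive-measure family of non-$II_1$ fibres would have to lie inside the null part.
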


\begin{proof} Clearly, (i) and (ii) are equivalent thanks to Theorem~\ref{thm:erg_charac_neveu}. We will establish the equivalence of (ii) and (iii) following the notation and terminology used in the proof of Theorem~\ref{thm:erg_charac}. Fix a free action $\{\phi_t\}$ arising in a Rosi\'nski representation of $\{X_t\}$.
	
First assume that (ii) holds but (iii) does not. Then there exists $Y_0 \subset Y$ such that $\nu(Y_0)>0$ and for all $y \in Y_0$, the factor $M_y$ is not of type $II_1$. Then it is possible to check, following the arguments used in the proof of Theorem~\ref{thm:erg_charac}, that $\Psi^{-1}(Y_0) \subseteq \mathcal{N}$, the null part of $\{\phi_t\}$. This is a contradiction to (ii) because 
$$
\mu(\mathcal{N}) \; \geq \; \nu \circ\Psi^{-1}(Y_0) \; = \; \nu(Y_0)>0.
$$
This completes the proof of (ii) implies (iii).

Now suppose that (iii) holds but (ii) does not. This means $\mu(\mathcal{N})>0$. Then $Y_1 = \Psi(\mathcal{N})$ satisfies
\[
\nu(Y_1) \; = \; \mu \circ \Psi^{-1}(Y_1) \; = \; \mu\big(\Psi^{-1}(\Psi(\mathcal{N}))\big) \; \geq \; \mu(\mathcal{N})) \; > \; 0
\]
and an argument similar to the one used in the proof of Theorem~\ref{thm:erg_charac} yields that for almost all $y \in Y_1$, the factor $M_y$ is not of type $II_1$, which contradicts the hypothesis that (iii) holds. This completes the proof of (iii) implies (ii) and hence of Theorem~\ref{thm:erg_part}. 
\end{proof}

\begin{remark} \label{remark:LRD} (Connection to Long Range Dependence) \textnormal{In Theorems~\ref{thm:erg_charac} and \ref{thm:erg_part} above, we notice two diametrically opposite situations for stationary \sas random fields indexed by $\mathbb{Z}^d$. No factor in the central decomposition of some (equivalently, any) free Rosi\'nski group measure space construction is of type $II_1$ (almost surely) if and only if the random field is ergodic. On the other hand, almost all such factors are of type $II_1$ if and only if the random field is fully non-ergodic. Since ergodicity is a form of asymptotic independence, we can come up with the following heuristic. In this setup, having more $II_1$ factors in the central decomposition of a Rosi\'nski group measure space construction should be thought of as presence of longer memory (i.e., stronger dependence) in the stationary \sas random field. This goes very well with the spirit of \cite{samorodnitsky:2004a}, who initiated this connection between length of memory of stable fields and ergodic theoretic properties of the underlysing nonsingular action. We take this connection forward to the realm of operator algebras as well.}
\end{remark}

The following corollaries follow immediately from  Theorems~\ref{thm:erg_charac} and \ref{thm:erg_part}, and hence their proofs are omitted. 

\begin{cor} \label{cor:Y_ctble}
With the assumptions of Theorem~\ref{thm:erg_charac} and the notation used in its proof, if $Y$ is countable and $\nu$ is the counting measure on $Y$, then $\{X_t\}_{t \in \mathbb{Z}^d}$ is ergodic (equivalently, weakly mixing) if and only if for each $y \in Y$, the factor $\LLL^\infty(S_y, \mu_y) \rtimes G$ is not of type $II_1$. ALso, $\{X_t\}_{t \in \mathbb{Z}^d}$ does not have a nontrivial ergodic part if and only if each factor $\LLL^\infty(S_y, \mu_y) \rtimes G$ is of type $II_1$.
\end{cor}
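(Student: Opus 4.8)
The plan is to derive both biconditionals directly from Theorems~\ref{thm:erg_charac} and \ref{thm:erg_part} by specializing the notions of admitting (resp.\ not admitting) a $II_1$ factor in the central decomposition to the case of a countable parameter space equipped with counting measure. First I would recall that, under the hypotheses of Theorem~\ref{thm:erg_charac}, the ergodically free Rosi\'{n}ski group measure space construction $M = \LLL^\infty(S,\mu) \rtimes \mathbb{Z}^d$ has the central decomposition supplied by Theorem~\ref{thm:cntrl_n_erg_decomp}, with $M_y = \LLL^\infty(S_y, \mu_y) \rtimes \mathbb{Z}^d$ indexed by the ergodic decomposition space $(Y, \nu)$. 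When $Y$ is countable and $\nu$ is the counting measure, the direct integral in \eqref{decomp:central} collapses to a direct sum (as noted after Theorem~\ref{thm:central_decomp}), and, crucially, every $\nu$-null subset of $Y$ is empty.

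For the first assertion, I would invoke the equivalence of statements (1), (2) and (4) in Theorem~\ref{thm:erg_charac}: the field $\{X_t\}_{t \in \mathbb{Z}^d}$ is ergodic (equivalently, weakly mixing) if and only if $M$ does not admit a $II_1$ factor in its central decomposition. By Definition~\ref{no_II_1_factor}(1), the latter means precisely that $M_y$ fails to be a $II_1$ factor for $\nu$-almost all $y \in Y$. Since $\nu$ is the counting measure on a countable set, ``for $\nu$-almost all $y$'' is the same as ``for every $y$'', so this condition reads: for each $y \in Y$, the factor $\LLL^\infty(S_y, \mu_y) \rtimes G$ is not of type $II_1$. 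This is exactly the stated criterion.

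The second assertion is entirely parallel. I would apply the equivalence of (i) and (iii) in Theorem~\ref{thm:erg_part}: the field $\{X_t\}_{t \in \mathbb{Z}^d}$ has no nontrivial ergodic part if and only if $M$ admits only $II_1$ factor(s) in its central decomposition. Unpacking Definition~\ref{no_II_1_factor}(2), this says $M_y$ is a $II_1$ factor for $\nu$-almost all $y$, which, again because $\nu$-null sets are empty under the counting measure, is equivalent to each $\LLL^\infty(S_y, \mu_y) \rtimes G$ being of type $II_1$.

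Since both conclusions follow by merely translating the ``$\nu$-almost all'' quantifiers in Definition~\ref{no_II_1_factor} into ``all'' quantifiers, there is no genuine obstacle here; the only point requiring care is the bookkeeping observation that, under the counting measure, a property holding $\nu$-almost everywhere in fact holds everywhere, including the factoriality of every $M_y$ that Theorem~\ref{thm:central_decomp} guarantees only almost surely.
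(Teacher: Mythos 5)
Your proposal is correct and matches the paper's intended argument exactly: the paper states that Corollary~\ref{cor:Y_ctble} ``follows immediately'' from Theorems~\ref{thm:erg_charac} and \ref{thm:erg_part}, and the only content needed is precisely your observation that under the counting measure the ``$\nu$-almost all'' quantifier in Definition~\ref{no_II_1_factor} becomes ``for all''. Your additional remark that factoriality of every $M_y$ (guaranteed only $\nu$-a.e.\ by Theorem~\ref{thm:central_decomp}) also upgrades to every $y$ is a careful and correct piece of bookkeeping, consistent with Theorems~\ref{thm:cntrl_n_erg_decomp} and \ref{thm:factor_ergodic}.
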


\begin{cor} \label{cor:rosinski_invariant}
Not admitting a $II_1$ factor (and also, admitting only $II_1$ factor(s)) in its central decomposition is an invariant for any free Rosi\'{n}ski (not necessarily minimal) group measure space construction of a fixed stationary \sas random field indexed by $\mathbb{Z}^d$. 
\end{cor}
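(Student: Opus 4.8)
The plan is to observe that both von Neumann algebraic conditions in question are, by the two immediately preceding theorems, equivalent to intrinsic probabilistic properties of the field $\{X_t\}_{t \in \mathbb{Z}^d}$ itself, and that these probabilistic properties make no reference to any particular representation. A stationary \sas field determines (and is determined by) its finite-dimensional distributions; since ergodicity and the absence of a nontrivial ergodic part are defined solely through the shift action on the induced measure $\mathbb{P}_\mathbf{X}$ (see Definition~\ref{defn:erg_SRF}), these are genuine invariants of the field. The whole strategy is therefore to transport this invariance back and forth through the equivalences furnished by Theorems~\ref{thm:erg_charac} and \ref{thm:erg_part}, using the intrinsic condition as a common pivot.

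In more detail, I would fix two ergodically free Rosi\'{n}ski group measure space constructions $M^{(1)} = \LLL^\infty(S_1, \mu_1) \rtimes \mathbb{Z}^d$ and $M^{(2)} = \LLL^\infty(S_2, \mu_2) \rtimes \mathbb{Z}^d$ of the same field $\{X_t\}_{t \in \mathbb{Z}^d}$, arising from ergodically free actions $\{\phi^{(1)}_t\}$ and $\{\phi^{(2)}_t\}$ in two (possibly distinct) Rosi\'{n}ski representations. By the equivalence of (1) and (4) in Theorem~\ref{thm:erg_charac}, $M^{(1)}$ does not admit a $II_1$ factor in its central decomposition if and only if $\{X_t\}$ is ergodic, and the same equivalence applied to $M^{(2)}$ shows that $M^{(2)}$ does not admit a $II_1$ factor if and only if $\{X_t\}$ is ergodic. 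Chaining these two equivalences through the common intermediate condition of ergodicity shows that $M^{(1)}$ fails to admit a $II_1$ factor precisely when $M^{(2)}$ does, establishing that ``not admitting a $II_1$ factor in its central decomposition'' is an invariant of the field.

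The second assertion is proved identically, replacing Theorem~\ref{thm:erg_charac} by Theorem~\ref{thm:erg_part}: the equivalence of (i) and (iii) there shows that $M^{(i)}$ admits only $II_1$ factor(s) in its central decomposition if and only if $\{X_t\}$ has no nontrivial ergodic part, for each $i = 1, 2$, and the pivot argument again forces agreement between $M^{(1)}$ and $M^{(2)}$. I do not expect any substantive obstacle here; the only point requiring care is verifying that the mediating conditions (ergodicity, respectively absence of a nontrivial ergodic part) depend on $\{X_t\}$ alone and not on the chosen Rosi\'{n}ski triplet, which is immediate from their definitions in terms of the shift action on $\big(\prod_{t} \mathbb{R},\, \bigotimes_{t} \mathbb{B}_\mathbb{R},\, \mathbb{P}_\mathbf{X}\big)$. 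Indeed, this invariance is already tacitly encoded in the ``some (equivalently, any)'' phrasing of statements (4) and (iii) in the two theorems, so the corollary amounts to little more than extracting and restating what those theorems have established.
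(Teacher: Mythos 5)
Your proposal is correct and is exactly the argument the paper intends: the paper omits the proof, stating that the corollary ``follows immediately'' from Theorems~\ref{thm:erg_charac} and \ref{thm:erg_part}, and the intended reasoning is precisely your pivot through the representation-independent properties of ergodicity and absence of a nontrivial ergodic part, instantiating the ``some (equivalently, any)'' clauses at each of the two given constructions. Nothing is missing.
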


Corollary~\ref{cor:Y_ctble} is a useful special case of Theorem~\ref{thm:erg_charac} and it will be applied in the proof of Theorem~\ref{thm:ex_MC} below. On the other hand, Corollary~\ref{cor:rosinski_invariant} gives us hope that a Rosi\'{n}ski group measure space construction too has reasonable information about a $\mathbb{Z}^d$-indexed  stationary \sas random field provided, of course, it is free. 

\subsection{$W^\ast_m$- and $W^\ast_R$-rigidities} \label{sec:Wstar_sas}

In light of the results presented in the previous subsection and inspired by the concept of $W^\ast$-superrigidity coined by Sorin Popa (see the discussions in Section~\ref{sec:Wstar_super} and also the survey of \cite{ioana:2018}), we introduce a bunch of new notions in this paper. We expect to use them in our future work as well. With this in mind, we define two equivalence relations for stationary \sas random fields when the indexing group is allowed to be any countable group, not just $\mathbb{Z}^d$. 
 
\begin{definition} \label{defn:Wstar_equiv_SRF}
Two stationary \sas fields $\big\{X^{(1)}_t\big\}_{t \in G_1}$ and $\big\{X^{(2)}_t\big\}_{t \in G_2}$ indexed by two (possibly different) countable groups are called $W^\ast_R$-equivalent (resp., $W^\ast_m$-equivalent) if a free  Rosi\'{n}ski (resp., minimal) group measure space construction of  $\big\{X^{(1)}_t\big\}_{t \in G_1}$ is isomorphic (as von Neumann algebra) to a free  Rosi\'{n}ski (resp., minimal) group measure space construction of  $\big\{X^{(2)}_t\big\}_{t \in G_2}$. 
\end{definition}

In particular, the notion of $W^\ast_R$-equivalence needs the existence of free  Rosi\'{n}ski group measure space constructions for both stationary \sas random fields. On the other hand, $W^\ast_m$-equivalence requires that both random fields are free is the sense of Definition~\ref{defn:SRF_erg_free}. In view of Remark~\ref{remark:erg_free_not_restr} above, these requirements do not constrain the notions too much. Since by Theorem~3.1 of \cite{rosinski:1995}, any minimal representation is a Rosi\'{n}ski representation (but the converse does not hold), it follows that
\begin{equation}
W^\ast_m\mbox{-equivalence} \;\; \Rightarrow \;\; W^\ast_R\mbox{-equivalence}  \label{impli_wstarm_wstarR}
\end{equation}
even though we are not sure of the converse. While it seems plausible that $W^\ast_R$-equivalence may not imply $W^\ast_m$-equivalence, we do not know of any counter-example to confirm this belief. Nor can we show that these two notions are equivalent under additional conditions on the groups and/or their actions and/or the fields. 

Using the above two equivalence relations, we introduce two notions of $W^\ast$ rigidity for a property of stationary \sas random fields as follows. 

\begin{definition}
A property $P$ of stationary \sas random fields (indexed by a class $\mathcal{G}$ of countable groups) is called $W^\ast_R$-rigid (resp., $W^\ast_m$-rigid) for $\mathcal{G}$ if whenever two such fields (not necessarily indexed by the same group) are $W^\ast_R$-equivalent (resp., $W^\ast_m$-equivalent), one enjoys property $P$ if and only if the other one does. 
\end{definition}

The following fact follows immediately from \eqref{impli_wstarm_wstarR} once we understand the terminology. 
	
\begin{thm} \label{thm:impli_Wstar_rigidities_SRF}
If a property $P$ of stationary \sas random fields is $W^\ast_R$-rigid for a class $\mathcal{G}$ of countable groups, then it is also $W^\ast_m$-rigid for $\mathcal{G}$.
\end{thm}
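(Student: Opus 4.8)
The plan is to reduce this statement to the one-line implication \eqref{impli_wstarm_wstarR} by simply unwinding the two rigidity definitions. First I would fix a property $P$ of stationary \sas random fields and assume it is $W^\ast_R$-rigid for the class $\mathcal{G}$. To verify $W^\ast_m$-rigidity, I take an arbitrary pair of stationary \sas fields $\big\{X^{(1)}_t\big\}_{t \in G_1}$ and $\big\{X^{(2)}_t\big\}_{t \in G_2}$ (with $G_1, G_2 \in \mathcal{G}$) that are $W^\ast_m$-equivalent, and aim to show that one of them enjoys $P$ precisely when the other does.

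The key step is to observe that $W^\ast_m$-equivalence of these two fields forces their $W^\ast_R$-equivalence, which is exactly the content of \eqref{impli_wstarm_wstarR}. Concretely, by Theorem~3.1 of \cite{rosinski:1995}, every minimal representation is a Rosi\'{n}ski representation; hence an ergodically free minimal group measure space construction of $\big\{X^{(i)}_t\big\}$ is, in particular, an ergodically free Rosi\'{n}ski group measure space construction of the same field. Therefore an isomorphism witnessing $W^\ast_m$-equivalence (between the two minimal constructions) simultaneously witnesses $W^\ast_R$-equivalence of these same constructions now viewed as Rosi\'{n}ski ones. The only point requiring a moment's care is the bookkeeping of the existence clauses in Definition~\ref{defn:Wstar_equiv_SRF}: $W^\ast_m$-equivalence presupposes that both fields are ergodically free in the sense of Definition~\ref{defn:SRF_erg_free}, and this guarantees that each field indeed possesses an ergodically free Rosi\'{n}ski group measure space construction, so the hypothesis needed even to speak of $W^\ast_R$-equivalence is automatically met.

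With $W^\ast_R$-equivalence in hand, I would invoke the assumed $W^\ast_R$-rigidity of $P$ directly: since the two fields are $W^\ast_R$-equivalent, $P$ holds for one if and only if it holds for the other. As the pair was an arbitrary $W^\ast_m$-equivalent one, this establishes that $P$ is $W^\ast_m$-rigid for $\mathcal{G}$, completing the argument.

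I do not expect any genuine obstacle here: all the mathematical substance is already packaged in \eqref{impli_wstarm_wstarR}, and the proof is a formal implication-chasing argument. The only thing to get right is ensuring that the minimal constructions one starts from also serve as the Rosi\'{n}ski constructions one ends with, together with the compatibility of the ergodic-freeness requirements in Definition~\ref{defn:Wstar_equiv_SRF}; both are immediate from the inclusion of minimal representations among Rosi\'{n}ski representations.
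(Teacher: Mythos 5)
Your proposal is correct and follows essentially the same route as the paper: both reduce the statement to the implication \eqref{impli_wstarm_wstarR} (itself a consequence of every minimal representation being a Rosi\'{n}ski representation) and then apply the assumed $W^\ast_R$-rigidity. Your extra care with the ergodic-freeness clauses in Definition~\ref{defn:Wstar_equiv_SRF} is a sound elaboration of a point the paper leaves implicit, but it does not change the argument.
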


\begin{proof}
Suppose $P$ is a $W^\ast_R$-rigid property. We have to show that $P$ is $W^\ast_m$-rigid. Take $G_1, G_2 \in \mathcal{G}$, two free stationary \sas random fields $\big\{X^{(1)}_t\big\}_{t \in G_1}$ and $\big\{X^{(2)}_t\big\}_{t \in G_2}$ such that they are $W^\ast_m$-equivalent, and $\big\{X^{(1)}_t\big\}_{t \in G_1}$ has property $P$. Then by \eqref{impli_wstarm_wstarR}, they are also $W^\ast_R$-equivalent. Hence by our hypothesis (of $P$ being $W^\ast_R$-rigid), $\big\{X^{(2)}_t\big\}_{t \in G_2}$ also has property $P$. By symmetry of the argument, it follows that $P$ is $W^\ast_m$-rigid. 
\end{proof}

Theorem~\ref{thm:impli_Wstar_rigidities_SRF} can be described pictorially with the help of Figure~\ref{fig:Wstar_reln}. As the figure suggests, more properties of stationary \sas random fields are $W^\ast_m$-rigid. This is because Theorem~\ref{thm:minml:invariant} above makes the minimal group measure space construction a powerful invariant that perhaps remembers a lot of features of a stationary \sas field indexed by any countable group. 

\begin{figure}[h]
	\centering
	\includegraphics{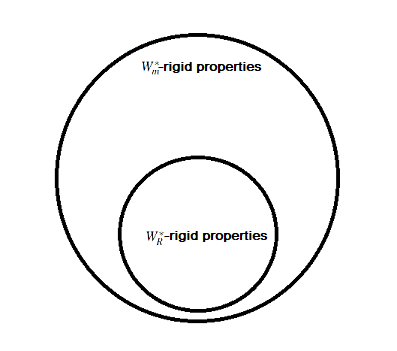}
	\caption{The relation between $W^\ast$-rigidities}
	\label{fig:Wstar_reln}
\end{figure}

On the other hand, thanks to \eqref{reln_between_f_and_fast}, we do expect any free Rosi\'{n}ski group measure space construction to remember many properties of a stationary \sas random field as well making the two circles in Figure~\ref{fig:Wstar_reln} come close to each other. In other words, our eventual aim should be to reverse the implication in \eqref{impli_wstarm_wstarR} (and hence in Theorem~\ref{thm:impli_Wstar_rigidities_SRF} for many properties) under extra algebraic restriction(s) on the group and/or ergodic theoretic condition(s) on the action and/or ptobabilistic constraint(s) on the random field. At this point, we are far  from this goal with the only partial success story being the following result.

\begin{thm} \label{thm:Wstar_rigidity}
	Ergodicity (equivalently, weak mixing) is a $W^\ast_R$-rigid (and hence a  $W^\ast_m$-rigid) property for $\mathcal{G}:=\big\{\mathbb{Z}^d: d \in \mathbb{N}\big\}$. Also, having a nontrivial ergodic part is a $W^\ast_R$-rigid (and hence a  $W^\ast_m$-rigid) property for $\mathcal{G}$. 
\end{thm}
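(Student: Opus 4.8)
The plan is to read off both rigidity statements directly from Theorems~\ref{thm:erg_charac} and \ref{thm:erg_part}. The essential content of those theorems is that the relevant probabilistic/ergodic property of the field is \emph{equivalent to an intrinsic von Neumann algebraic property} of any one of its ergodically free Rosi\'nski group measure space constructions; once this translation is in hand, $W^\ast_R$-rigidity should follow from the trivial observation that intrinsic von Neumann algebraic properties are preserved under isomorphism.

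The first step I would carry out is to verify that the property appearing in clause (4) of Theorem~\ref{thm:erg_charac}, namely \emph{not admitting a $II_1$ factor in its central decomposition} (Definition~\ref{no_II_1_factor}), is a genuine isomorphism invariant of a von Neumann algebra. Indeed, by Theorem~\ref{thm:central_decomp} the central decomposition \eqref{decomp:central} is unique up to isomorphism, so any isomorphism $\Theta: M \to N$ of von Neumann algebras carries the fibre factors $\{M_y\}$ of $M$ onto the fibre factors $\{N_z\}$ of $N$ (up to the relevant base-measure identification and modulo null sets); since being a type $II_1$ factor is itself preserved under isomorphism of factors, $M$ does not admit a $II_1$ factor in its central decomposition if and only if $N$ does not. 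The identical argument shows that \emph{admitting only $II_1$ factor(s)}, the property in clause (iii) of Theorem~\ref{thm:erg_part}, is an isomorphism invariant as well.

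With this in place I would argue as follows. Suppose $\big\{X^{(1)}_t\big\}_{t \in \mathbb{Z}^{d_1}}$ and $\big\{X^{(2)}_t\big\}_{t \in \mathbb{Z}^{d_2}}$ are $W^\ast_R$-equivalent; then by Definition~\ref{defn:Wstar_equiv_SRF} there are ergodically free Rosi\'nski group measure space constructions $M_1$ and $M_2$ of the two fields with $M_1 \cong M_2$. By the previous paragraph, $M_1$ does not admit a $II_1$ factor in its central decomposition if and only if $M_2$ does not. Applying the equivalence of (1) and (4) in Theorem~\ref{thm:erg_charac} to each field (legitimate because each possesses an ergodically free Rosi\'nski construction, and clause (4) is stated for \emph{some, equivalently any}, such construction), I conclude that $\big\{X^{(1)}_t\big\}$ is ergodic if and only if $\big\{X^{(2)}_t\big\}$ is ergodic, which is precisely the $W^\ast_R$-rigidity of ergodicity (equivalently weak mixing) for $\mathcal{G}$. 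For the second statement, the isomorphism invariance of \emph{admitting only $II_1$ factor(s)} together with the equivalence of (i) and (iii) in Theorem~\ref{thm:erg_part} shows that \emph{not having a nontrivial ergodic part} is $W^\ast_R$-rigid; since a property is $W^\ast_R$-rigid exactly when its negation is (the defining condition is a biconditional), \emph{having a nontrivial ergodic part} is $W^\ast_R$-rigid too. The promised $W^\ast_m$-rigidity of both properties is then immediate from Theorem~\ref{thm:impli_Wstar_rigidities_SRF}.

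The only genuinely delicate point, and the one I expect to be the main (if routine) obstacle, is the first step: one must ensure that an abstract isomorphism of the two crossed products really does transport the central decompositions in the almost-everywhere sense of Definition~\ref{no_II_1_factor}, rather than matching them only up to a discrepancy that could distort the count of $II_1$ fibres. This is exactly what the uniqueness clause of Theorem~\ref{thm:central_decomp} supplies: the measure class on the base space $Y$ and the isomorphism types of the fibre factors are determined by the algebra alone, so the set of $y$ for which $M_y$ is of type $II_1$ is well defined modulo a null set and is carried faithfully across by $\Theta$.
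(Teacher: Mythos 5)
Your proposal is correct and takes essentially the same route as the paper's own proof: both translate ergodicity (resp.\ complete absence of an ergodic part) into the isomorphism-invariant property of the central decomposition via the equivalences (1)$\Leftrightarrow$(4) of Theorem~\ref{thm:erg_charac} and (i)$\Leftrightarrow$(iii) of Theorem~\ref{thm:erg_part}, transport that property across the isomorphism furnished by $W^\ast_R$-equivalence, and then invoke Theorem~\ref{thm:impli_Wstar_rigidities_SRF} for $W^\ast_m$-rigidity. The only difference is one of detail: you make explicit the two steps the paper compresses into ``In particular'' and ``follows similarly'', namely that the uniqueness clause of Theorem~\ref{thm:central_decomp} makes ``not admitting a $II_1$ factor'' (and ``admitting only $II_1$ factor(s)'') a genuine von Neumann algebra invariant, and that rigidity of a property is equivalent to rigidity of its negation.
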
 

\begin{proof}
Take two stationary \sas random fields $\big\{X^{(1)}_t\big\}_{t \in \mathbb{Z}^{d_1}}$ and $\big\{X^{(2)}_t\big\}_{t \in \mathbb{Z}^{d_2}}$ such that they are $W^\ast_R$-equivalent. This means that for each $i=1,\,2$, there exists a free nonsingular action $\{\phi^{(i)}_t\}_{t \in \mathbb{Z}^{d_i}}$ on a standard Borel space $(S_i, \mu_i)$ arising in a Rosi\'{n}ski representation such that 
\[
\LLL^\infty(S_1, \mu_1) \rtimes \mathbb{Z}^{d_1} \cong \LLL^\infty(S_2, \mu_2) \rtimes \mathbb{Z}^{d_2}. 
\]
as von Neumann algebras. 

In particular, $\LLL^\infty(S_1, \mu_1) \rtimes \mathbb{Z}^{d_1}$ does not admit a $II_1$ factor in its central decomposition if and only if same is true about $\LLL^\infty(S_2, \mu_2) \rtimes \mathbb{Z}^{d_2}$. Therefore, by Theorem~\ref{thm:erg_charac}, $\big\{X^{(1)}_t\big\}_{t \in \mathbb{Z}^{d_1}}$ is ergodic (equiv., weakly mixing) if and only if $\big\{X^{(2)}_t\big\}_{t \in \mathbb{Z}^{d_2}}$ is so. This completes the proof of the first statement, i.e., ergodicity (equiv, weak mixing) is a $W^\ast_R$-rigid (and hence a  $W^\ast_m$-rigid) property for $\mathcal{G}$. The second statement follows similarly using Theorem~\ref{thm:erg_part}. 
\end{proof}

It follows from Theorem~\ref{thm:Wstar_rigidity} that ergodicity (and also having a nontrivial ergodic part) is an orbit equivalence rigid property for stationary stable random fields indexed by  $\mathbb{Z}^d$, $d \in \mathbb{N}$ in the following sense. 

\begin{cor} \label{cor:OE_rigidity}
	If two stationary \sas fields $\big\{X^{(1)}_t\big\}_{t \in \mathbb{Z}^{d_1}}$ and $\big\{X^{(2)}_t\big\}_{t \in \mathbb{Z}^{d_2}}$ are generated by orbit equivalent free nonsingular actions (of $\mathbb{Z}^{d_1}$ and $\mathbb{Z}^{d_1}$, respectively), then the following statements hold.
	
	\noindent (a)~$\big\{X^{(1)}_t\big\}_{t \in \mathbb{Z}^{d_1}}$ is ergodic (equiv. weakly mixing) if and only if $\big\{X^{(2)}_t\big\}_{t \in \mathbb{Z}^{d_2}}$ is so. 
	
	\noindent (b)~$\big\{X^{(1)}_t\big\}_{t \in \mathbb{Z}^{d_1}}$ has a nontrivial ergodic part if and only if so does $\big\{X^{(2)}_t\big\}_{t \in \mathbb{Z}^{d_2}}$. 
\end{cor}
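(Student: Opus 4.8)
The plan is to reduce this statement to the $W^\ast_R$-rigidity already established in Theorem~\ref{thm:Wstar_rigidity}, exploiting the fact that orbit equivalence is a strictly stronger relation than $W^\ast$-equivalence. In other words, the entire content of the corollary is that the hypothesis (orbit equivalence of the generating actions) is enough to force $W^\ast_R$-equivalence of the two fields, after which Theorem~\ref{thm:Wstar_rigidity} does all the work.

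First I would unpack the hypothesis: for each $i=1,\,2$, the field $\{X^{(i)}_t\}_{t \in \mathbb{Z}^{d_i}}$ is generated by an ergodically free nonsingular action $\{\phi^{(i)}_t\}_{t \in \mathbb{Z}^{d_i}}$ on a standard measure space $(S_i, \mu_i)$ arising in a Rosi\'{n}ski representation, and the two actions $\{\phi^{(1)}_t\}$ and $\{\phi^{(2)}_t\}$ are orbit equivalent in the sense of Definition~\ref{defn:conj_oe}. Next I would invoke the concluding assertion of Theorem~\ref{thm:factor_ergodic} (equivalently, the implication chain \eqref{impli_conj_oe_wstar}), which guarantees that orbit-equivalent nonsingular actions, even of different groups, have isomorphic group measure space constructions. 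Applying this to $\{\phi^{(1)}_t\}$ and $\{\phi^{(2)}_t\}$ yields
\[
\LLL^\infty(S_1, \mu_1) \rtimes \mathbb{Z}^{d_1} \cong \LLL^\infty(S_2, \mu_2) \rtimes \mathbb{Z}^{d_2}
\]
as von Neumann algebras.

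Since both underlying actions are ergodically free and arise in Rosi\'{n}ski representations, this isomorphism is precisely a witness that $\{X^{(1)}_t\}$ and $\{X^{(2)}_t\}$ are $W^\ast_R$-equivalent in the sense of Definition~\ref{defn:Wstar_equiv_SRF}. Conclusions (a) and (b) then follow immediately by applying the two halves of Theorem~\ref{thm:Wstar_rigidity}: ergodicity (equivalently, weak mixing) is $W^\ast_R$-rigid for $\mathcal{G} = \{\mathbb{Z}^d : d \in \mathbb{N}\}$, which gives (a), and having a nontrivial ergodic part is $W^\ast_R$-rigid for $\mathcal{G}$, which gives (b).

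The argument is essentially a one-step corollary, so I do not anticipate any substantial analytic obstacle; the proof is a translation between equivalence relations rather than a fresh computation. The only point requiring care is the bookkeeping: I must check that the orbit equivalence assumed in the hypothesis is between exactly the ergodically free Rosi\'{n}ski-representation actions that enter Definition~\ref{defn:Wstar_equiv_SRF}, so that the resulting isomorphism of crossed products is genuinely an instance of $W^\ast_R$-equivalence and Theorem~\ref{thm:Wstar_rigidity} applies verbatim, rather than to some other pair of actions that merely happen to generate the same fields.
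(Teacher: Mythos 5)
Your proposal is correct and follows essentially the same route as the paper's own proof: both arguments pass from orbit equivalence to isomorphism of the crossed products via the chain \eqref{impli_conj_oe_wstar}, recognize this as $W^\ast_R$-equivalence of the two fields in the sense of Definition~\ref{defn:Wstar_equiv_SRF}, and then invoke the two halves of Theorem~\ref{thm:Wstar_rigidity} to obtain (a) and (b). Your closing remark about verifying that the orbit-equivalent actions are exactly the ergodically free Rosi\'{n}ski-representation actions entering the definition of $W^\ast_R$-equivalence is precisely the bookkeeping the paper performs implicitly, so nothing is missing.
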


\begin{proof} Under the hypothesis of Corollary~\ref{cor:OE_rigidity}, it follows using \eqref{impli_conj_oe_wstar} that the corresponding free Rosi\'{n}ski group measure space constructions are isomorphic as von Neumann algebras. In other words, $\big\{X^{(1)}_t\big\}_{t \in \mathbb{Z}^{d_1}}$ and $\big\{X^{(2)}_t\big\}_{t \in \mathbb{Z}^{d_2}}$ are $W^\ast_R$-equivalent. Hence, by Theorem~\ref{thm:Wstar_rigidity}, $\big\{X^{(1)}_t\big\}_{t \in \mathbb{Z}^{d_1}}$ is ergodic (equivalently, weakly mixing) if and only if $\big\{X^{(2)}_t\big\}_{t \in \mathbb{Z}^{d_2}}$ is so. This establishes (a). Statement~(b) follows similarly. 
\end{proof}

Note that without Corollary~\ref{cor:OE_rigidity}, it would not at all be obvious that orbit equivalence preserves ergodicity (or the complete lack of it) in our setup. We would also like to mention that the indexing groups having possibly different ranks (as $\mathbb{Z}$-modules) is actually very useful in the context of orbit equivalence. This is due to the seminal result of \cite{connes:feldman:weiss:1981} which states that any nonsingular action of $\mathbb{Z}^d$ (more generally, of any amenable group) is orbit equivalent to a nonsingular $\mathbb{Z}$-action. Therefore, it is now possible to associate a stationary \sas process to any stationary \sas random field indexed by $\mathbb{Z}^d$ in an ergodicity-preserving manner. 

\subsection{Stationary Max-stable Random Fields} \label{sec:max_stable}
\textnormal{All our results presented in Section~\ref{sec:main_results} have analogues for stationary max-stable (more specifically, $\alpha$-Fr\'{e}chet) random fields as well. Such random fields enjoy a spectral representation similar to \eqref{repn:rosinski_intro} except that the cocycle is trivial, the \sas random measure $M$ is replaced by an independently scattered $\alpha$-Fr\'{e}chet random sup-measure with control measure $\mu$, and the integral in \eqref{repn:rosinski_intro} becomes an extremal integral; see \cite{stoev:taqqu:2005} and \cite{wang:stoev:2010b} for the details in the $G=\mathbb{Z}$ case. The extension to any countable indexing group $G$ is trivial.}

\textnormal{There is a parallel notion of minimality of spectral representations for max-stable fields; see, for example, \cite{wang:stoev:2009}, Proposition~6.1 of which states that nonsingular actions arising in two minimal spectral representations of a $\mathbb{Z}$-indexed stationary $\alpha$-Fr\'{e}chet random field are conjugate. The same arguments holds for a general coutable indexing group $G$ (with a bit of care keeping in mind the potential noncommutativity of the group as in the proof of Theorem~\ref{thm:minml_repn:conjugacy}). Therefore the minimal group measure space construction becomes an invariant in the max-stable case as well.}
	
\textnormal{Using Theorem~5.3 of \cite{wang:roy:stoev:2013}, we can establish an analogue of Theorem~\ref{thm:erg_charac} for stationary $\alpha$-Fr\'{e}chet random fields by following the same proof. This immediately gives rise to a similar $W^\ast$-rigidity theory in the max-stable world establishing parallels of all results proved in Section~\ref{sec:main_results}. This actually goes very well with the spirit of \cite{wang:stoev:2010a}, who established a strong association between sum-stable and max-stable paradigms.}

\section{Applications and Future Directions} \label{sec:final_remarks}

In this section, we view some of the known classes of examples through the lens of our results, and finally present a few open problems and conjectures. In particular, we discuss how the indexing group in some of our results can perhaps be extended to more general groups (as opposed to only $\mathbb{Z}^d$).

\subsection{Examples} \label{sec:example} This section is devoted to applications of our results to a few examples as described below. They will help us understand the notation and terminology used in this work. For simplicity, all the cocycles in the Rosi\'nski representations of the stationary \sas fields discussed in this subsection will be taken to be the trivial one (i.e., $c_t \equiv 1$ for all $t$). We would also like to mention that all these examples have canonical max-stable (more precisely, $\alpha$-Fr\'echet)  analogues and the conclusions drawn about them will be the same due to the discussions in Section~\ref{sec:max_stable}. 

\begin{example}[Stationary Mixed Moving Average \sas Random Field] \textnormal{This $\mathbb{Z}^d$-indexed random field was introduced (in the $d=1$ case) by \cite{surgailis:rosinski:mandrekar:cambanis:1993}. Suppose $(Y, \nu)$ is a $\sigma$-finite standard measure space, $\zeta$ is the counting measure on $\mathbb{Z}^d$, $M$ is an \sas random measure on $S= Y \times \mathbb{Z}^d$ with control measure $\mu = \nu \otimes \zeta$, and $f \in \LLL^\alpha({Y \times \mathbb{Z}^d}, \nu \otimes \zeta)$ is a real-valued function. Stationary mixed moving average \sas random field is defined by}
\[
X_t = \int_{Y \times \mathbb{Z}^d} f(y, z+t) dM(y,z), \;\;\; t \in \mathbb{Z}^d. 
\]	

\textnormal{In the above Rosi\'nski representation, the measure-preserving (and hence nonsingular)  $\mathbb{Z}^d$-action on $S= Y \times \mathbb{Z}^d$ is given by}
\[
\phi_t(y,z)=(y,z+t), \;\;\;\;\; (y,z) \in Y \times \mathbb{Z}^d
\]
\textnormal{for all $t \in \mathbb{Z}^d$. Note that using Fubini's theorem the measure $\mu = \nu \otimes \zeta$ can be rewritten as}
\[
\mu(A) = \int_Y \mu_y(A) d\nu(y), \;\;\;\; A \subseteq S,
\]
\textnormal{where $\mu_y(A) = \zeta(\{z \in \mathbb{Z}^d: (y, z) \in A\})$ for all $y \in Y$. Each $\mu_y$ is supported on $S_y:=\{y\} \times \mathbb{Z}^d$ giving rise to the ergodic decomposition $\{(S_y, \mu_y): y \in Y\}$ of $\{\phi_t\}$.} 

\textnormal{The action $\{\phi_t\}_{t \in \mathbb{Z}^d}$ restricted to each $(S_y, \mu_y)$ is just a shift action on the second component, and is free and ergodic. Moreover, it is well-known that such a shift action is of Krieger type $II_\infty$ (this actually follows from the uniqueness (up to a constant) of Haar measure on $\mathbb{Z}^d$). In particular, the corresponding free Rosi\'nski group measure space construction
$
\LLL^\infty({Y \times \mathbb{Z}^d}, \nu \otimes \zeta) \rtimes \mathbb{Z}^{d}
$
does not admit a $II_1$ factor (in fact, it admits only $II_\infty$ factors; see, for example, \cite{jones:2009} for the definition of a $II_\infty$ factor) in its central decomposition}
\[
\LLL^\infty({Y \times \mathbb{Z}^d}, \nu \otimes \zeta) \rtimes \mathbb{Z}^{d} = \int_Y \big(\LLL^\infty({\{y\} \times \mathbb{Z}^d}, \, \mu_y) \rtimes \mathbb{Z}^{d}\big) \, d\nu(y)
\]
\textnormal{obtained via Theorem~\ref{thm:cntrl_n_erg_decomp}. Therefore, by Theorem~\ref{thm:erg_charac}, $\{X_t\}_{t \in \mathbb{Z}^d}$ is ergodic.}
\end{example}

\begin{example}[Stationary Sub-Gaussian \sas Random Field] \textnormal{This is a slight generalization of Example~6.1 of \cite{roy:samorodnitsky:2008}. Suppose $\mu$ is a probability measure on $S=\mathbb{R}^{\mathbb{Z}^d}$ such that the coordinate projections $\{p_t\}_{t \in \mathbb{Z}^d}$ is an ergodic stationary random field under $\mu$. Take an \sas random measure $M$ on $\mathbb{R}^{\mathbb{Z}^d}$ with control measure $\mu$ and define a stationary \sas random field}
\begin{equation}
X_t = \int_{\mathbb{R}^{\mathbb{Z}^d}} p_t dM, \;\;\;\; t \in \mathbb{Z}^d.  \label{int_repn:sub_Gaussian}
\end{equation}
\textnormal{When $\{p_t\}_{t \in \mathbb{Z}^d}$ is a sequence of i.i.d.\ Gaussuan random variables, then  $\{X_t\}_{t \in \mathbb{Z}^d}$  becomes a stationary sub-Gaussian \sas field; see Example~5.1 in \cite{samorodnitsky:2004a} and also Example~6.1 in \cite{roy:samorodnitsky:2008}.} 

\textnormal{In the Rosi\'nski representation \eqref{int_repn:sub_Gaussian}, the underlying action is simply the shift action of $\mathbb{Z}^d$ on $\mathbb{R}^{\mathbb{Z}^d}$, and it is p.m.p.\ and ergodic because of our assumptions on $\mu$. In particular, by Theorem~\ref{thm:factor_ergodic}, the corresponding Rosi\'nski group measure space construction}
\[
M=\LLL^\infty\big(\mathbb{R}^{\mathbb{Z}^d}, \mu\big) \rtimes \mathbb{Z}^{d}
\]
\textnormal{is itself a factor, which is of type $II_1$. This means that the set $Y$ in the central decomposition \eqref{decomp:central} of this crossed product von Neumann algebra $M$ is a singleton set with $\nu$ being the counting measure on it. Hence, another application of Theorem~\ref{thm:erg_charac} yields that  $\{X_t\}_{t \in \mathbb{Z}^d}$ is not ergodic. Nor does it have a nontrivial ergodic part.}
\end{example}

\begin{example}[Stationary \sas Process Associated with a Markov Chain] \label{ex:sas_MC} \textnormal{This class of examples was introduced by \cite{rosinski:samorodnitsky:1996} (see also Example~4.1 in \cite{samorodnitsky:2005a}). Following them, we would assume that $G=\mathbb{Z}$. It is possible to extend this example to a $\mathbb{Z}^d$-indexed one in parallel to Example~6.2 of \cite{wang:roy:stoev:2013} but it would complicate the notation a lot. Therefore, we refrain from doing so.}
	
\textnormal{We start with a Markov chain on a countable state space $C$ that decomposes into nonempty communication classes $\{C_i: i \in I\}$. Clearly, $I$ is also countable and hence we assume that $I \subseteq \mathbb{N}$. For each $i \in I$, fix a state $l_i \in C_i$. Assume that for each $i \in I$, the Markov chain restricted to $C_i$ is aperiodic and recurrent (and of course, irreducible) with the transition probability matrix $\big(p^{(i)}_{jk}\big)_{j, k \in C_i}$, and the invariant measure $\pi^{(i)} = (\pi^{(i)}_l: l \in C_i)$ on $C_i$ with $\pi^{(i)}_l \in (0, \infty)$ for each $l \in C_i$ and $\pi^{(i)}_{l_i}=1$ (such an invariant measure exists uniquely because of irreducibility and recurrence of the restricted Markov chain; see, e.g., Proposition~2.12.3 of \cite{resnick:1992}). Let $S= \cup_{i \in I} S_i$, where $S_i = C_i^\mathbb{Z}$.} 

\textnormal{Fix $i \in I$ for now. For each $l \in C_i$, let $\mathbb{P}^{(i)}_l$ be the unique probability measure on 
$$
S_i = C_i^\mathbb{Z}=\{x=(x(t): t \in \mathbb{Z}):\mbox{ each }x(t) \in C_i\}
$$ 
such that under $\mathbb{P}^{(i)}_l$, $x(0)=l$, $(x(0), x(1), x(2), \ldots)$ is a Markov chain with transition probability matrix $\big(p^{(i)}_{jk}\big)_{j, k \in C_i}$, and $(x(0), x(-1), x(-2), \ldots)$ is a Markov chain with transition probability matrix $\big(\pi^{(i)}_k p^{(i)}_{kj}/\pi^{(i)}_j \big)_{j, k \in C_i}$\,. Define a $\sigma$-finite measure $\mu_i$ on $S_i = C_i^\mathbb{Z}$ as}
\begin{equation}
\mu_i:=\sum_{l \in C_i} \pi^{(i)}_l \mathbb{P}^{(i)}_l. \label{defn_mu_i}
\end{equation}
\textnormal{Clearly, $\mu_i$ is a finite measure if and only if $\pi^{(i)}$ is so, which is equivalent to saying the underlying Markov chain restricted to $C_i$ is positive recurrent.} 	

\textnormal{Now we vary $i \in I$ and assign this measure $\mu_i$ defined by \eqref{defn_mu_i} on each $S_i = C_i^\mathbb{Z}$ to obtain a measure $\mu$ on $S= \cup_{i \in I} S_i$. In other words, $\mu$ is the unique $\sigma$-finite measure on $S= \cup_{i \in I} S_i$ such that its restriction to each $S_i$ is $\mu_i$. Let $\{\phi_t\}_{t \in \mathbb{Z}^d}$ be the shift action on $S$. This means that for all $t \in \mathbb{Z}^d$,}
\[
\phi_t(x)(u)=x(u+t), \;\;\;\; x \in S, \, u \in \mathbb{Z}. 
\]
\textnormal{Clearly this $\mathbb{Z}^d$ action is free.}
	
\textnormal{Because of the invariance of the measures  $\pi^{(i)}$, $i \in I$, the action  $\{\phi_t\}_{t \in \mathbb{Z}^d}$ preserves the measure $\mu$. Recall that for each $i \in I$, we have fixed $l_i \in C_i$ and chosen the invariant measure $\pi^{(i)}$ such a way that $\pi^{(i)}_{l_i}=1$. Hence the map $f: S \to \mathbb{R}$ defined by}
\[
f(x) := \sum_{i \in I} {2^{-i/\alpha}}\,\mathbbm{1}_{(x \in S_i,\, x(0)=l_i)} 
\]
\textnormal{belongs to $\LLL^\alpha(S, \mu)$. Suppose $M$ is an \sas random measure on $S$ with control measure $\mu$. Define a $\mathbb{Z}$-indexed stationary \sas random field using the Rosi\'nski representation}
\begin{equation}
X_t = \int_S f \circ \phi_t(x) dM(x), \;\;\;\; t \in \mathbb{Z}. \label{int_repn:MC_ex}
\end{equation}
\textnormal{The following result gives a criterion for ergodicity (equivalently, weak mixing) as well as for the complete absence of it for the random field defined above.}
\end{example}

\begin{thm} \label{thm:ex_MC} The random field $\{X_t\}_{t \in \mathbb{Z}}$ defined by \eqref{int_repn:MC_ex} is ergodic if and only if all the states of the underlying Markov chain are null recurrent. On the other hand, $\{X_t\}_{t \in \mathbb{Z}}$ does not have a nontrivial ergodic part if and only if all the states are positive recurrent. 
\end{thm}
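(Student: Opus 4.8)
The plan is to read off the ergodic and Neveu decompositions of the shift action $\{\phi_t\}_{t\in\mathbb{Z}}$ on $(S,\mu)$ directly from the communication-class structure of the Markov chain, and then feed this into Corollary~\ref{cor:Y_ctble}. First I would observe that each path space $S_i=C_i^{\mathbb{Z}}$ is $\{\phi_t\}$-invariant (a bi-infinite trajectory living in the class $C_i$ stays in $C_i$ after any shift), that the $S_i$ are pairwise disjoint, and that $\mu_i$ is $\sigma$-finite with $\mu_i(\{x:x(0)=l\})=\pi^{(i)}_l<\infty$. The key input is the classical fact that the shift on the path space of an irreducible, aperiodic, recurrent Markov chain, equipped with its invariant measure $\mu_i$, is ergodic and conservative (see, e.g., \cite{aaronson:1997}); hence $\{\phi_t\}$ restricted to $(S_i,\mu_i)$ is free and ergodic. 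By the essential uniqueness of the ergodic decomposition this shows that $\{(S_i,\mu_i):i\in I\}$ is exactly the ergodic decomposition of $(S,\mu)$, indexed by the countable set $Y=I$, and since $\mu=\sum_{i\in I}\mu_i$ we may take $\nu$ to be the counting measure on $I$, placing us precisely in the setting of Corollary~\ref{cor:Y_ctble}.

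With the decomposition in hand, the whole problem reduces to determining, for each $i\in I$, whether the component factor $M_i:=\LLL^\infty(S_i,\mu_i)\rtimes\mathbb{Z}$ is of type $II_1$. Since the restricted action is free and ergodic, part~(3) of Theorem~\ref{thm:factor_ergodic} applies: $M_i$ is a $II_1$ factor if and only if $\{\phi_t\}$ restricted to $(S_i,\mu_i)$ is a positive action, i.e.\ admits a finite invariant measure equivalent to $\mu_i$. Because the restricted action is ergodic and already $\mu_i$-preserving, any $\sigma$-finite invariant measure equivalent to $\mu_i$ is a scalar multiple of $\mu_i$ (its Radon--Nikodym derivative is invariant, hence $\mu_i$-a.e.\ constant by ergodicity); consequently the action is positive exactly when $\mu_i$ is finite. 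Finally I would compute $\mu_i(S_i)=\sum_{l\in C_i}\pi^{(i)}_l\,\mathbb{P}^{(i)}_l(S_i)=\pi^{(i)}(C_i)$, which is finite precisely when $\pi^{(i)}$ is a finite measure, i.e.\ when the chain restricted to $C_i$ is positive recurrent. Thus $M_i$ is of type $II_1$ when $C_i$ is positive recurrent and of type $II_\infty$ when $C_i$ is null recurrent.

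Assembling these facts through Corollary~\ref{cor:Y_ctble} gives both assertions at once. The field $\{X_t\}_{t\in\mathbb{Z}}$ is ergodic if and only if no $M_i$ is of type $II_1$, i.e.\ if and only if every $C_i$ is null recurrent, which is to say all states are null recurrent; and $\{X_t\}_{t\in\mathbb{Z}}$ fails to have a nontrivial ergodic part if and only if every $M_i$ is of type $II_1$, i.e.\ if and only if every $C_i$ is positive recurrent, which is to say all states are positive recurrent. The step I expect to be the main obstacle is the clean identification of the ergodic decomposition in the infinite-measure (null recurrent) regime: one must justify both the ergodicity of the Markov shift with respect to the infinite invariant measure $\mu_i$ and the fact that, despite some $\mu_i(S_i)$ being infinite, the index space may legitimately be taken countable with $\nu$ the counting measure, so that Corollary~\ref{cor:Y_ctble} genuinely applies. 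Everything else is a direct translation between positive/null recurrence, finiteness of $\pi^{(i)}$, and the $II_1$/$II_\infty$ dichotomy.
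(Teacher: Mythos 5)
Your proposal is correct, and its skeleton coincides with the paper's proof: identify $\{(S_i,\mu_i):i\in I\}$ as the ergodic decomposition of the shift action (with $Y=I$ and $\nu$ the counting measure), determine for each $i$ whether the component factor $\LLL^\infty(S_i,\mu_i)\rtimes\mathbb{Z}$ is of type $II_1$, and then invoke Corollary~\ref{cor:Y_ctble}. Where you diverge is in the middle step. The paper settles the $II_1$/non-$II_1$ dichotomy by citing Theorem~\ref{thm:factor_ergodic} together with ``the discussions in Example~4.1 of \cite{samorodnitsky:2005a}'', i.e.\ it outsources the positive-recurrence $\Leftrightarrow$ type $II_1$ correspondence to an external reference. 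You instead make this step self-contained: since the restricted shift is ergodic and already $\mu_i$-preserving, any $\sigma$-finite invariant measure equivalent to $\mu_i$ has an invariant, hence a.e.\ constant, Radon--Nikodym derivative, so the restricted action is positive precisely when $\mu_i(S_i)=\pi^{(i)}(C_i)<\infty$, i.e.\ precisely when $C_i$ is positive recurrent; Theorem~\ref{thm:factor_ergodic}(3) then gives the type. This is a genuine (if local) improvement in self-containedness, and it buys a proof that does not depend on the reader unpacking Samorodnitsky's example. Two small caveats: your phrase ``hence \ldots free'' overstates what ergodicity and conservativity give you---freeness of the restricted shift needs the separate (easy, but not automatic) observation that periodic paths form a $\mu_i$-null set, which the paper likewise only asserts (``Clearly this action is free''); and the ``main obstacle'' you flag at the end is not actually an obstacle, since Theorem~\ref{thm:erg_comp} explicitly allows the component measures $\mu_y$ to be $\sigma$-finite and infinite, and ergodicity of the null-recurrent Markov shift is classical (it is exactly what the paper, and you, cite \cite{aaronson:1997} for).
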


\begin{proof} \textnormal{Firstly observe that each $S_i$ is a $\{\phi_t\}$-invariant subset of $S$ and $\mu = \sum_{i \in I} \mu_i$. The underlying Markov chain being irreducible and recurrent on each communication class $C_i$, the shift action $\{\phi_t\}$ restricted to each $S_i$ is ergodic. Therefore $\{(S_i, \mu_i): i \in I\}$ is the ergodic decomposition of $\{\phi_t\}_{t \in \mathbb{Z}^d}$ with $Y=I$ and $\nu$ being the counting measure on $I$.}
	
\textnormal{Clearly, the Rosi\'nski group measure space construction corresponding to the intergral representation \eqref{int_repn:MC_ex}}
	\[
	\LLL^\alpha(S, \mu) \rtimes \mathbb{Z}  = \LLL^\alpha\big(\cup_{i \in I} C_i^\mathbb{Z}, \mu\big) \rtimes \mathbb{Z}
	\]
is free. By Theorem~\ref{thm:cntrl_n_erg_decomp}, it has the central decomposition
	\[
	\LLL^\alpha(S, \mu) \rtimes \mathbb{Z} \cong \bigoplus_{i \in I} \big(\LLL^\alpha(S_i, \mu_i) \rtimes \mathbb{Z}\big) = \bigoplus_{i \in I} \big(\LLL^\alpha(C_i^\mathbb{Z}, \mu_i) \rtimes \mathbb{Z}\big)\,.
	\]
	
\textnormal{Using Theorem~\ref{thm:factor_ergodic} above and the discussions in Example~4.1 of \cite{samorodnitsky:2005a}, it follows that the factor $\big(\LLL^\alpha(C_i^\mathbb{Z}, \mu_i) \rtimes \mathbb{Z}\big)$ is of type $II_1$ or not according as the underlying Markov chain restricted to the communication class $C_i$ is positive recurrent or null recurrent, respectively. (In the latter case, the factor is actually of type $II_\infty$.) From this observation, Theorem~\ref{thm:ex_MC} follows with the help of Corollary~\ref{cor:Y_ctble}.}
\end{proof}

\begin{remark} \label{remark:LRD_sas_MC}
	\textnormal{In light of Remark~\ref{remark:LRD}, we perceive that the presence of more positive recurrent communication classes in the underlying Markov chain ensures longer memory of the associated  stationary stable process considered in Example~\ref{ex:sas_MC}. Intuitively, the positive recurrence of many states causes their frequent returns leading to the Markov chain's long range dependence, which gets transferred to the stable process via the random measure $M$.}
\end{remark}

\subsection{Conjectures and Open Problems} \label{sec:conj}

Our work leads to many conjectures and open problems. The most important one is the following.

\begin{op} \label{op:general_groups}
 Can we establish the $W^\ast$-rigidities in Theorem~\ref{thm:Wstar_rigidity} for a richer class $\mathcal{G}$ of countable groups? 
\end{op}

\noindent Observe that the class $\mathcal{G}$ in Theorem~\ref{thm:Wstar_rigidity} comes out of Theorems~\ref{thm:erg_charac} and \ref{thm:erg_part}, both of which can be extended to a bigger class of groups provided the same can be done for Theorem~\ref{thm:erg_charac_neveu}; see Remark~\ref{remark:positive_part_using_II_1}, which also states that the obstacle is ergodic theoretic and not operator algeraic. More precisely, the main hindrance of extending the proof of Theorem~3.1 of \cite{samorodnitsky:2005a} is the unavailability of an ergodic theorem for nonsingular actions of more general groups; see, for example, the dicussions in \cite{jarrett:2019}. 

The use of ergodic theorems in the proof of Theorem~3.1 of \cite{samorodnitsky:2005a} (and also Theorem~4.1 of \cite{wang:roy:stoev:2013}) is twofold. Firstly, it is used in establishing the criterion of ergodicity (and thereby linking it to weak mixing) as presented in Section~\ref{sec:erg_wm_nonerg} above. This needs a pointwise ergodic theorem for p.m.p.\ actions, which is actually available for any amenable group thanks to the seminal work of \cite{lindenstrauss:2001}. The role of hypercubes (for $\mathbb{Z}^d$) is played by a special family of F\o{}lner sets (called \emph{tempered F\o{}lner sequence}). The second use is slightly delicate. In order to mimic it, we need a stochastic ergodic theorem for a nonsingular action, which is not necessarily p.m.p.\ and this had to be established even for $G=\mathbb{Z}^d$ in Theorem~2.7 of \cite{wang:roy:stoev:2013} (see also \cite{hochman:2010} for a ratio ergodic theorem for multiparameter non-singular actions). 

In view of the discussions above, it is now clear that a stochastic ergodic theorem for nonsingular actions indexed by a more general group is the key to solving Problem~\ref{op:general_groups}. In partcular, for amenable groups, this has to be established along a tempered  F\o{}lner sequence (or its carefully chosen subsequence). For each discrete Heisenberg group, \cite{jarrett:2019} discovered a sequence of subsets (depending on the group) such that the ergodic theorem holds along the sequence for all nonsingular actions. This leads to the following conjecture.

\begin{conj}  \label{conj:heisenberg_group}
Theorem~\ref{thm:erg_charac_neveu} (and hence Theorems~\ref{thm:erg_charac}, \ref{thm:erg_part} and \ref{thm:Wstar_rigidity}) will hold for discrete Heisenberg groups.
\end{conj}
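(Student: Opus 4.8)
The plan is to reduce the conjecture to the ergodic-theoretic core behind Theorem~\ref{thm:erg_charac_neveu}, namely an extension to the discrete Heisenberg group $H$ of Theorem~3.1 of \cite{samorodnitsky:2005a} and Theorem~4.1 of \cite{wang:roy:stoev:2013}, and then to harvest everything else for free. Indeed, as emphasized in Remark~\ref{remark:positive_part_using_II_1}, the operator-algebraic equivalences ``(3)$\Leftrightarrow$(4)'' of Theorem~\ref{thm:erg_charac} and ``(ii)$\Leftrightarrow$(iii)'' of Theorem~\ref{thm:erg_part} were already proved for an arbitrary countable indexing group, using only the ergodic decomposition (Theorem~\ref{thm:erg_comp}) and Theorem~\ref{thm:cntrl_n_erg_decomp}, both valid for any countable $G$; likewise the Neveu decomposition $S=\mathcal{P}\cup\mathcal{N}$ holds for every countable group. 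Consequently, once the probabilistic characterizations ``ergodic $\Leftrightarrow$ weakly mixing $\Leftrightarrow$ null action'' and ``no nontrivial ergodic part $\Leftrightarrow$ positive action'' are established for $H$, Theorems~\ref{thm:erg_charac} and \ref{thm:erg_part} follow verbatim, and Theorem~\ref{thm:Wstar_rigidity} (with its corollaries) is an immediate consequence. So the entire problem concentrates on these two probabilistic characterizations.

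To prove them for $H$, I would mimic the arguments of \cite{samorodnitsky:2005a} and \cite{wang:roy:stoev:2013}, which, as the authors note, rest on two distinct uses of an ergodic theorem. The first is to convert ergodicity (and weak mixing) of the $H$-indexed field $\{X_t\}$ into the vanishing of suitable Cesàro averages. Writing $f_t=(d\mu\circ\phi_t/d\mu)^{1/\alpha}\,f\circ\phi_t$, the relevant quantities are the codifference-type averages $\frac{1}{|F_n|}\sum_{t\in F_n}\big\|\,|f|\wedge|f_t|\,\big\|_\alpha^\alpha$ along an averaging sequence $\{F_n\}\subset H$. Since the shift action $\{\tau_t\}$ on the path space $\big(\prod_{t\in H}\mathbb{R},\,\mathbb{P}_{\mathbf{X}}\big)$ is probability-measure-preserving and $H$ is amenable, this first step can be carried out via the pointwise ergodic theorem of \cite{lindenstrauss:2001}, provided $\{F_n\}$ is a \emph{tempered} F\o{}lner sequence; this also yields the equivalence of ergodicity and weak mixing, exactly as for $\mathbb{Z}^d$.

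The second, more delicate, use of the ergodic theorem is for the \emph{nonsingular} (and in general non-p.m.p.) action $\{\phi_t\}$ on $(S,\mu)$ itself: one needs a stochastic/ratio ergodic theorem to show that on the null part $\mathcal{N}$ the averages above tend to zero (forcing weak mixing), while on a nontrivial positive part $\mathcal{P}$ they stay bounded away from zero (forcing non-ergodicity and a nontrivial ergodic part). For $\mathbb{Z}^d$ this was the content of Theorem~2.7 of \cite{wang:roy:stoev:2013}. For $H$ the required input is supplied by \cite{jarrett:2019}, who exhibits, for each discrete Heisenberg group, a distinguished sequence of subsets along which a ratio ergodic theorem holds for \emph{every} nonsingular action. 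Feeding Jarrett's sequence into the Neveu decomposition and repeating the dominated-convergence and conservativity estimates of \cite{samorodnitsky:2005a} separately on $\mathcal{N}$ and on $\mathcal{P}$ should close the argument, with the noncommutativity of $H$ handled as in the proof of Theorem~\ref{thm:minml_repn:conjugacy}.

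The main obstacle is reconciling the two averaging schemes into one. Lindenstrauss's pointwise theorem demands a tempered F\o{}lner sequence, whereas Jarrett's nonsingular theorem is valid only along his specific combinatorial sequence, and nonsingular ratio ergodic theorems are genuinely sequence-dependent (they hold only along carefully chosen shapes; cf.\ \cite{hochman:2010}, \cite{jarrett:2019}), so one cannot simply substitute an arbitrary F\o{}lner sequence. The crux is therefore to produce a single sequence $\{F_n\}\subset H$ that is simultaneously tempered F\o{}lner (so the p.m.p.\ step applies) and satisfies Jarrett's hypotheses (so the nonsingular step applies)---or, failing that, to interleave the two along a common subsequence while controlling the cross terms. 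Verifying that Jarrett's sequence can be taken tempered F\o{}lner, or suitably modified without destroying either property, is where I expect the real work to lie; once that compatibility is secured, the remainder is a faithful transcription of the $\mathbb{Z}^d$ arguments.
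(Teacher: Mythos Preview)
The statement you are addressing is a \emph{conjecture}, not a theorem: the paper offers no proof of it, and indeed lists it among the open problems in Section~\ref{sec:conj}. Your proposal is therefore not being compared against an existing argument but against the paper's own heuristic discussion of why the conjecture is plausible and what stands in the way of proving it.

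On that score, your outline tracks the paper's reasoning almost exactly. The paper itself explains (in the paragraphs preceding Conjecture~\ref{conj:heisenberg_group}) that the proof of Theorem~\ref{thm:erg_charac_neveu} uses the ergodic theorem in two distinct places: once for the p.m.p.\ shift on path space, handled for any amenable group by \cite{lindenstrauss:2001} along a tempered F\o{}lner sequence, and once for the genuinely nonsingular action $\{\phi_t\}$, where a stochastic/ratio ergodic theorem is required. It then points to \cite{jarrett:2019} as supplying the latter ingredient for discrete Heisenberg groups, and on that basis formulates the conjecture. Your proposal reproduces this two-step analysis, correctly notes that the operator-algebraic equivalences already hold for arbitrary countable $G$ (Remark~\ref{remark:positive_part_using_II_1}), and correctly isolates the compatibility of the two averaging sequences as the crux.

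That said, what you have written is not a proof but a strategy together with an honest identification of the gap---precisely the gap that makes this a conjecture rather than a theorem. The paper does not claim to know that Jarrett's sequence can be taken tempered F\o{}lner, nor that the two schemes can be interleaved; it only says this ``has to be established along a tempered F\o{}lner sequence (or its carefully chosen subsequence).'' Your final paragraph acknowledges exactly this, so you should present your write-up as a proof \emph{sketch} or a discussion of the expected route, not as a proof. Resolving the compatibility issue you flag is the actual content of the conjecture.
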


Resolution of Conjecture~\ref{conj:heisenberg_group} will, of course, solve Problem~\ref{op:general_groups} partially by enhancing the class $\mathcal{G}$ a bit. Apart from this, Conjecture~\ref{conj:heisenberg_group} will be of independent interest because discrete Heisenberg groups naturally crop up in space-time models. Also, random walks on such groups have been studied extensively; see, for example, \cite{beguin:valette:zuk:1997}, \cite{gretete:2011} and the references therein. Another natural and general question in this context is the following. 

\begin{op} \label{op:amenable}
Will Conjecture~\ref{conj:heisenberg_group} hold for groups of polynomial growth (or more generally, for a bigger class of amenable groups)?
\end{op}

Since many actions of finitely generated free groups $(\mathbb{F}_m,\; m \geq 2)$ and infinite property (T) groups (both are important classes of non-amenable groups) have $W^\ast$-rigidity (see, for example, \cite{popa:vaes:2014}) and $W^\ast$-superrigidity (see, for example, \cite{ioana:2011}), respectively, it is not unreasonable to expect that many properties (not just ergodicity or the complete absence of it) will become at least $W^\ast_m$-rigid for these class of groups. 

\begin{conj} \label{conj:free_prop_T}
 Many probabilistic properties of stationary \sas random fields will become $W^\ast_m$-rigid (a few properties will, in fact, be $W^\ast_R$-rigid) for $\mathcal{G}=\{\mathbb{F}_m: m \geq 2\}$ as well as for  $\mathcal{G}=\{G: G \mbox{ is a countably infinite group having property}~(T)\}.$ 
\end{conj}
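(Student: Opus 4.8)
The plan is to route the conjecture through the two-step dictionary that underlies all of Section~\ref{sec:main_results}: a stationary \sas field is encoded (via a minimal representation) by a nonsingular $G$-action $\{\phi_t\}$, this action is determined up to conjugacy by Theorem~\ref{thm:minml_repn:conjugacy}, and the minimal group measure space construction $\LLL^\infty(S,\mu)\rtimes G$ is the corresponding von Neumann algebraic invariant (Theorem~\ref{thm:minml:invariant}). To show that a probabilistic property $P$ is $W^\ast_m$-rigid for a class $\mathcal{G}$ one must establish two essentially independent facts: first, \emph{on the probabilistic side}, that $P$ is a conjugacy invariant (for $W^\ast_R$-rigidity, an orbit-equivalence invariant) of the underlying nonsingular action; and second, \emph{on the operator-algebraic side}, that for every $G\in\mathcal{G}$ the isomorphism class of $\LLL^\infty(S,\mu)\rtimes G$ remembers the action up to conjugacy (resp.\ orbit equivalence), i.e.\ that the relevant actions are $W^\ast$-superrigid (resp.\ $W^\ast$-rigid). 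Granting both, $W^\ast_m$-equivalence of two fields forces their minimal actions to be conjugate, whence $P$ transfers from one field to the other, and Theorem~\ref{thm:impli_Wstar_rigidities_SRF} then yields the $W^\ast_m$ statement from the $W^\ast_R$ one.

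For the property (T) part of $\mathcal{G}$ the natural input is the $W^\ast$-superrigidity theorem of \cite{ioana:2011}: a Bernoulli action of an ICC property (T) group reconstructs its conjugacy class from its crossed product. If the minimal action attached to the field is (conjugate to) such a Bernoulli-type action, then the second step holds outright and \emph{every} conjugacy invariant $P$ identified in the first step becomes $W^\ast_m$-rigid, which is precisely the strong form asserted in the conjecture. For the free-group part one would instead invoke the $W^\ast$-rigidity results of \cite{popa:vaes:2014}; these are of orbit-equivalence-rigidity type, so they supply the second step only at the level of orbit equivalence and therefore yield $W^\ast_R$-rigidity for the narrower family of orbit-equivalence-invariant properties, matching the conjecture's parenthetical claim that a few properties will in fact be $W^\ast_R$-rigid. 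The supply of candidate properties for the first step is large: by the references collected in Section~\ref{sec:intro}, mixing, the conservative/dissipative and positive/null (Neveu) dichotomies, the Krieger type of the action, growth of partial maxima, and the length-of-memory phenomenology of \cite{samorodnitsky:2004a} are all determined by the conjugacy class of $\{\phi_t\}$, and many survive even under orbit equivalence, so verifying the first step case by case should be routine ergodic theory rather than a genuine difficulty.

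The hard part will be the second step, and the obstruction is structural rather than technical. Every superrigidity theorem currently available, including \cite{ioana:2011} and \cite{popa:vaes:2014}, is stated for \emph{probability-measure-preserving} actions, whereas the actions arising from stationary \sas fields are genuinely nonsingular: indeed, by Theorem~\ref{thm:erg_charac_neveu} a p.m.p.\ action forces the field to be non-ergodic, so the interesting (ergodic) fields are exactly the ones whose actions fall outside the reach of the known machinery. Thus the conjecture cannot be proved by direct citation; one needs \emph{nonsingular} analogues of Ioana-type and Popa--Vaes-type superrigidity, valid for the conservative, type-$III$ (or type-$II_\infty$) actions that stable fields produce. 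The only foothold is partial, for example Proposition~D of \cite{vaes:2014}, and extending such statements to the Bernoulli-type actions of property (T) groups and to the relevant actions of $\mathbb{F}_m$, while simultaneously checking that the minimal actions of natural stable fields actually land in the superrigid class, is where essentially all the difficulty lies. A reasonable first milestone would therefore be to isolate a single nontrivial conjugacy invariant beyond ergodicity (already handled by Theorem~\ref{thm:Wstar_rigidity}) and prove its $W^\ast_m$-rigidity for one property (T) group admitting a nonsingular superrigid action, thereby converting the heuristic of the conjecture into a theorem in at least one genuinely new case.
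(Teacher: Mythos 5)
The statement you were given is Conjecture~\ref{conj:free_prop_T}: it is posed in Section~\ref{sec:conj} as an open problem, and the paper contains no proof of it --- only the motivating sentence preceding it, which observes that many actions of free groups enjoy $W^\ast$-rigidity (citing \cite{popa:vaes:2014}) and that Bernoulli actions of ICC property~(T) groups enjoy $W^\ast$-superrigidity (citing \cite{ioana:2011}), so that rigidity of probabilistic properties of the associated stable fields is ``not unreasonable to expect''. Your submission is accordingly not a proof either, and you say so explicitly: the decisive ingredient of your second step --- that the group measure space construction of the relevant \emph{nonsingular} action remembers the action up to conjugacy or orbit equivalence --- exists in the literature only for p.m.p.\ actions, which (as the paper itself notes at the end of Section~\ref{sec:Wstar_super}) is precisely the regime excluded when the field is to be ergodic. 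So there is no discrepancy to adjudicate between your argument and the paper's: neither proves the conjecture, and the obstruction you name (nonsingular analogues of Ioana- and Popa--Vaes-type theorems, with Proposition~D of \cite{vaes:2014} as the only partial foothold) is the genuine one.

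As a research program, your outline coincides with the paper's own heuristic --- the same two references, in the same roles, with the same asymmetry explaining why property~(T) groups should yield the stronger form (many conjugacy-invariant properties, $W^\ast_m$-rigid) and free groups only the orbit-equivalence-level form --- and it sharpens that heuristic by making the two-step factorization explicit: (i) the property must be a conjugacy (resp.\ orbit-equivalence) invariant of the underlying action, which by Theorems~\ref{thm:minml_repn:conjugacy} and \ref{thm:minml:invariant} is well defined from minimal representations; (ii) the crossed product must determine the action up to conjugacy (resp.\ orbit equivalence). One refinement you should record for step (i) at the $W^\ast_R$ level: since Rosi\'nski representations are not unique and two Rosi\'nski actions of the same field need not be orbit equivalent, it is not enough that $P$ be an OE-invariant of \emph{one} Rosi\'nski action; $P$ must be read off identically from \emph{every} ergodically free Rosi\'nski representation, in the manner of the ``some (equivalently, any)'' clauses of Theorems~\ref{thm:erg_charac_neveu} and \ref{thm:erg_charac}. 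That well-definedness is exactly what makes ergodicity tractable in Theorem~\ref{thm:Wstar_rigidity}, and it is the template any new candidate property (mixing, maxima growth, length of memory in the sense of \cite{samorodnitsky:2004a}) would have to fit before the operator-algebraic step is even relevant.
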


\noindent Of course, Conjecture~\ref{conj:free_prop_T} is very broad and hence we can start with specific properties like ergodicity, weak mixing, mixing, etc. In particular, the nonsingular action of $\mathbb{F}_m$ on its Furstenberg-Poisson boundary $\partial\mathbb{F}_m$ is free and ergodic, and the corresponding group measure space construction is of type $III$. Therefore, the following conjecture is a natural one. 

\begin{conj} \label{conj:sas_bdy_action_free_gp}
The stationary \sas random field considered in Example~3.2 of \cite{sarkar:roy:2018} (generated by the boundary action of $\mathbb{F}_m$) is ergodic. 
\end{conj}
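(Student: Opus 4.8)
The plan is to deduce this conjecture from an $\mathbb{F}_m$-analogue of Theorem~\ref{thm:erg_charac_neveu}, together with the structural data of the boundary action. Write the \sas field of Example~3.2 of \cite{sarkar:roy:2018} in its Rosi\'{n}ski form with underlying nonsingular action $\{\phi_t\}_{t \in \mathbb{F}_m}$ equal to the $\mathbb{F}_m$-action on the Furstenberg--Poisson boundary $(\partial\mathbb{F}_m, \nu)$ (with trivial cocycle, as is customary). Granting the equivalence ``field ergodic $\iff$ underlying action is null'' for $G=\mathbb{F}_m$, it suffices to check that the positive part of the boundary action is $\nu$-null. This verification is immediate from the facts recorded in the excerpt: the boundary action is free and ergodic, and its group measure space construction is of Krieger type $III$. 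By Theorem~\ref{thm:factor_ergodic}(3), a positive action (free and ergodic) would yield a $II_1$ factor; since the construction is of type $III$ and not $II_1$, the action cannot be positive, and by ergodicity its positive part $\mathcal{P}$ is therefore $\nu$-null. Equivalently, type $III$-ness means there is no $\sigma$-finite invariant measure equivalent to $\nu$, so a fortiori no finite invariant equivalent measure on any invariant subset of positive measure.

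The substance of the proof thus lies in establishing the $\mathbb{F}_m$-version of Theorem~\ref{thm:erg_charac_neveu}, or at least its implication ``null action $\Rightarrow$ ergodic field,'' after which Theorem~\ref{thm:erg_charac} would apply verbatim with $G=\mathbb{F}_m$. I would follow the architecture of the proof of Theorem~3.1 of \cite{samorodnitsky:2005a} (and Theorem~4.1 of \cite{wang:roy:stoev:2013}), replacing the hypercubes used for $\mathbb{Z}^d$ by an appropriate averaging family for $\mathbb{F}_m$. As the excerpt explains, that proof invokes ergodic theorems in two places: first, a pointwise ergodic theorem for the p.m.p.\ shift on the induced path space, which ties ergodicity of the field to its weak mixing; and second, a \emph{nonsingular} stochastic ergodic theorem for $\{\phi_t\}$ itself, controlling the convergence of the series representation of the field. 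For the first ingredient, pointwise ergodic theorems for p.m.p.\ free-group actions are available along spherical (ball) shells, so this step should transfer, with the role of the tempered F\o{}lner sequence of \cite{lindenstrauss:2001} now played by spherical averages.

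The hard part, and the reason this remains a conjecture, is the second ingredient: a stochastic ergodic theorem for a \emph{nonsingular} (not necessarily p.m.p.) $\mathbb{F}_m$-action, the exact analogue of Theorem~2.7 of \cite{wang:roy:stoev:2013}. Because $\mathbb{F}_m$ is non-amenable there is no F\o{}lner sequence, so one must average over spherical shells and re-derive convergence of the relevant random series along that family; the nonsingular setting renders even the almost-everywhere statement delicate, and no such theorem is presently available (cf.\ the discussion surrounding \cite{jarrett:2019}). A shortcut worth attempting first is a direct argument specific to the boundary action, exploiting its strong proximality and type $III$ structure to verify weak mixing of the induced shift without passing through the general ergodic theorem. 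However, since an ergodic type $III$ action is conservative, the associated field is not expected to be mixing, so such a route cannot reduce to the simpler mixing criterion and would still have to confront the weak-mixing analysis directly. I therefore expect ingredient two above to be the decisive obstacle.
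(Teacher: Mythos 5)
You should first note that the statement you were asked to prove is not proved in the paper at all: it is stated as Conjecture~\ref{conj:sas_bdy_action_free_gp}, an open problem, and the paper offers only the motivating observation that the boundary action of $\mathbb{F}_m$ on $\partial\mathbb{F}_m$ is free, ergodic and of type $III$. Your proposal is honest about the same thing --- you do not close the argument either --- and the reduction you do carry out is sound and matches the paper's implicit rationale exactly: granting an $\mathbb{F}_m$-analogue of Theorem~\ref{thm:erg_charac_neveu} (indeed only the implication ``null action $\Rightarrow$ ergodic field'' is needed), the conjecture follows because type $III$-ness excludes any equivalent $\sigma$-finite invariant measure, so by Theorem~\ref{thm:factor_ergodic}(3) the action is not positive, and ergodicity forces the Neveu decomposition to be trivial, i.e.\ the positive part is $\nu$-null. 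The genuine gap is precisely the one you name, and it is the same one the paper identifies in Remark~\ref{remark:positive_part_using_II_1} and Section~\ref{sec:conj}: the proof of Theorem~\ref{thm:erg_charac_neveu} for $\mathbb{Z}^d$ rests on a pointwise ergodic theorem for p.m.p.\ actions and, more seriously, on a stochastic ergodic theorem for \emph{nonsingular} actions (Theorem~2.7 of \cite{wang:roy:stoev:2013}), and no analogue of the latter exists for $\mathbb{F}_m$. If anything, you understate the difficulty: $\mathbb{F}_m$ is non-amenable, so not only is \cite{lindenstrauss:2001} unavailable, but ratio/nonsingular ergodic theorems are known to be genuinely problematic beyond the amenable setting (cf.\ \cite{hochman:2010}, \cite{jarrett:2019}), so your proposed substitution of tempered F\o{}lner sets by spherical averages is itself speculative rather than a routine transfer. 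So, as a proof, the proposal has a gap it cannot close with currently available tools --- which is consistent with, and essentially explains, why the paper leaves the statement as a conjecture.

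One side remark in your last paragraph is incorrect and worth flagging, because it leads you to dismiss what may be the most promising workaround. You argue that since an ergodic type $III$ action is conservative, ``the associated field is not expected to be mixing.'' Conservativity of the underlying action does not preclude mixing of the stable field: the paper's own Example~\ref{ex:sas_MC} (with Theorem~\ref{thm:ex_MC}) is generated by a conservative action when the Markov chain is null recurrent, and exactly those processes were shown to be \emph{mixing} in \cite{rosinski:samorodnitsky:1996}. Consequently, a direct verification of a mixing-type criterion for the boundary-action field --- which requires no ergodic theorem, only control of quantities like $\mu\bigl(\{|f|>\epsilon\}\cap\phi_t^{-1}(\{|f|>\epsilon\})\bigr)$ as $t\to\infty$ in $\mathbb{F}_m$ --- is not ruled out by your argument, and since mixing implies ergodicity of the induced shift for any countable group, it could bypass the missing nonsingular ergodic theorem entirely. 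That route would of course require extending the mixing characterization of \cite{rosinski:samorodnitsky:1996} beyond $\mathbb{Z}$, but that extension is plausibly easier than the ergodic-theoretic obstacle you correctly identify as decisive for the weak-mixing approach.
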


\noindent If Conjecture~\ref{conj:sas_bdy_action_free_gp} can be resolved positively, then the following question would be the next step.

\begin{op}
 Will a version of Theorem~\ref{thm:erg_charac} hold for any stationary \sas random field indexed by $\mathbb{F}_m$ at least under some conditions? 
\end{op}

The next problem is related to Question~\ref{qn_on_superrigidity_intro}. It is, of course, a very difficult question. Keeping this in mind, it is perhaps better to use various test cases and answer it partially.

\begin{op}\label{op:qn_superrigid}
Is it possible to answer Question~\ref{qn_on_superrigidity_intro} at least partially? In other words, is it possible to give conditions on the  group(s) and/or the action(s) and/or the stationary \sas field(s) such that the minimal (more generally, a Rosi\'nski) group measure space construction remembers the field fully or partially? 
\end{op}

\noindent Note that if all the conditions are put on exactly one of the two random fields, then the question of complete rememberance is a $W^\ast$-superrigidity question, which is beyond our reach at this point. Instead, we can take specific properties and ask the following question. 

\begin{op}
Will Conjecture~\ref{conj:free_prop_T} hold for the class $\mathcal{G}=\{\mathbb{Z}^d: d \in \mathbb{N}\}$ with respect to probabilistic properties other than ergodicity (or the absolute lack of it)? More specifically, will mixing be $W_R$-rigid or at least $W_m$-rigid for $\mathcal{G}$? 
\end{op}

Another special case of Problem~\ref{op:qn_superrigid} relates to the growth of partial maxima as investigated in \cite{samorodnitsky:2004a}, \cite{roy:samorodnitsky:2008}, \cite{sarkar:roy:2018} and \cite{athreya:mj:roy:2019}. Suppose $G$ is a finitely generated group and fixing a generating set of $G$, we study the growth maxima of a stationary $G$-indexed \sas random field on increasing balls of radius $n \to \infty$ with centre $e$ (the identity element of $G$) in the word metric.
\begin{op} \label{op:maxima}
Can we find a rich class $\mathcal{G}$ of finitely generated groups such that the growth of the maxima sequence (as mentioned above) being like the i.i.d.\ case will be a $W^\ast_R$-rigid (or at least $W^\ast_m$-rigid) property for $\mathcal{G}$? 
\end{op}

\noindent More specifically, we can restrict our attention to the class of groups considered in \cite{athreya:mj:roy:2019} and try to relate the non-vanishing cocycle growth of the underlying nonsingular action with the minimal (or a Rosi\'nski) group measure space construction. Alternatively, we can also try to solve Problem~\ref{op:maxima} for $\mathcal{G}= \{\mathbb{Z}^d: d \in \mathbb{N}\}$ and for $\mathcal{G}= \{\mathbb{F}_m: m \geq 2\}$ relating our work with that of \cite{roy:samorodnitsky:2008} and \cite{sarkar:roy:2018}, respectively. 

The next two problems, if positively answered, will show the strength of a Rosi\'nski group measure space construction by making it an invariant (in the first case) or at least an almost invariant (in the second case) for a stationary \sas random field.

\begin{op} \label{op:Rosinski_invariant}
	Can we establish, under additional conditions on the group and/or the actions and/or the random field, that any two Rosi\'nski group measure space constructions of a fixed stationary \sas random field are isomorphic as von Neumann algebras? 
\end{op}
 
\begin{op}
Even if we cannot solve Problem~\ref{op:Rosinski_invariant}, is it at least possible to reverse the implication in \eqref{impli_wstarm_wstarR} under some conditions?
\end{op}

\begin{remark}\label{remark:solns} \textnormal{As we see, this work leads to a Pandora's box of open problems and conjectures. Some of these have recently been resolved. For example, \cite{avraham-re’em:2023} proved that (1) - (3) of Theorem~\ref{thm:erg_charac} are equivalent for stationary \sas random fields indexed by any amenable group, not just $\mathbb{Z}^d$. Also, \cite{mj:roy:sarkar:2022} established the equvalence of (2) and (3) of Theorem~\ref{thm:erg_charac} for amenable group indexed \sas fields using a different approach and argued in Section~4.2 of their paper that their work, together with the work of \cite{avraham-re’em:2023}, solves Open~Problem~\ref{op:general_groups}, Conjecture~\ref{conj:heisenberg_group} and Open~Problem~\ref{op:amenable} above. In addition to this, \cite{mj:roy:sarkar:2022} also resolved Conjecture~\ref{conj:sas_bdy_action_free_gp} for stationary \sas random fields indexed by any non-elementary hyperbolic group, not just finitely generated free groups.}
\end{remark}

We hope to solve the rest of the problems in the years to come. Of course, trying out important test cases and then slowly making educated guesses will be the ideal approach towards this goal.

\section*{Acknowledgements} 
This paper is dedicated to the memory of Professor K.~R.~Parthasarathy, whose remarkable wisdom, endless encouragement and constant support have inspired generations of mathematicians in India. The author is extremely thankful to Daniel J. Rudolph for drawing his attention to Koopman representations (in 2005). He would also like to thank Sandeepan Parekh for explaining the basics of von Neumann algebras during his visit to the Indian Statistical Institute (in 2016), and Adrian Ioana for some important suggestions as well as for providing the reference \cite{vaes:2014}. The author is grateful to Apoorva Khare for the careful reading of a previous draft of this manuscript and his detailed comments that significantly improved the presentation of this paper. Seminars given by V.~S.~Sunder and Adrian Ioana helped the author understand the basics of the group measure space construction and $W^\ast$-rigidity theory, respectively. Discussions with Tathagata Banerjee, Riddhipratim Basu, B. V. Rajarama Bhat, Jyotishman Bhowmick, Nishant Chandgotia, Kajal Das, Debashish Goswami, Rajat Subhra Hazra, Manjunath Krishnapur, Mahan Mj, Jan Rosi\'{n}ski, Sourav Sarkar, Kalyan B.~Sinha and D. Yogeshwaran are also gratefully acknowledged.

\end{document}